\def\sR{{\mathbb R}}
\def\gO{{\mathcal{O}}}
\def\dist{{\rm dist}}
\newcommand{\Rmnum}[1]{\expandafter\@slowromancap\romannumeral #1@}
\DeclareMathOperator*{\argmin}{arg\,min}
\theoremstyle{plain}
\newtheorem{theorem}{Theorem}[section]
\newtheorem{proposition}{Proposition}[section]
\newtheorem{lemma}{Lemma}[section]
\newtheorem{definition}{Definition}[section]
\newtheorem{assumption}{Assumption}[section]
\newtheorem{remark}{Remark}[section]
\title{Adaptive Algorithms for Nonconvex Bilevel Optimization under P{\L} Conditions}
\author{Xu Shi\textsuperscript{1}\quad\quad\quad\quad Yinglin Du\textsuperscript{1}\quad\quad\quad\quad Rufeng Xiao\textsuperscript{1}\quad\quad\quad\quad Rujun Jiang\textsuperscript{$\dagger$, 1, 2}\\
\vspace{-2mm} \\
\normalsize{\textsuperscript{1}School of Data Science, Fudan University}\\ \normalsize{\textsuperscript{2}Shanghai Key Laboratory for Contemporary Applied Mathematics, Fudan University}\\
\vspace{-2mm} \\
\normalsize{\texttt{\{xshi22, yldu23, rfxiao24\}@m.fudan.edu.cn}}\\
\normalsize{ \texttt{rjjiang@fudan.edu.cn}}\\
}
\date{}
\begin{document}

\maketitle

\begingroup
\begin{NoHyper}
\renewcommand\thefootnote{$\dagger$}
\footnotetext{Corresponding author.}
\end{NoHyper}
\endgroup

\begin{abstract}
Existing methods for nonconvex bilevel optimization (NBO) require prior knowledge of first- and second-order problem-specific parameters (e.g., Lipschitz constants and the Polyak-{\L}ojasiewicz (P{\L}) parameters) to set step sizes, a requirement that poses practical limitations when such parameters are unknown or computationally expensive. We introduce the Adaptive Fully First-order Bilevel Approximation (AF${}^2$BA) algorithm and its accelerated variant, A${}^2$F${}^2$BA, for solving NBO problems under the P{\L} conditions. To our knowledge, these are the first methods to employ fully adaptive step size strategies, eliminating the need for any problem-specific parameters in NBO. We prove that both algorithms achieve $\mathcal{O}(1/\epsilon^2)$ iteration complexity for finding an $\epsilon$-stationary point, matching the iteration complexity of existing well-tuned methods. Furthermore, we show that A${}^2$F${}^2$BA enjoys a near-optimal first-order oracle complexity of $\tilde{\mathcal{O}}(1/\epsilon^2)$, matching the oracle complexity of existing well-tuned methods, and aligning with the complexity of gradient descent for smooth nonconvex single-level optimization when ignoring the logarithmic factors.

\textbf{Keywords} Nonconvex bilevel optimization, Adaptive method, Polyak-{\L}ojasiewicz condition, First-order oracle, Near-optimal complexity
\end{abstract}

\section{Introduction}
\label{sec:intro}
Bilevel optimization has attracted considerable attention due to its diverse applications in areas including reinforcement learning \citep{konda1999actor,hong2023two}, meta-learning \citep{bertinetto2018meta,rajeswaran2019meta,ji2020convergence}, hyperparameter optimization \citep{franceschi2018bilevel,shaban2019truncated,yu2020hyper,chen2024lower}, adversarial learning \citep{bishop2020optimal,wang2021fast,wang2022solving}, and signal processing \citep{kunapuli2008classification,flamary2014learning}. The general formulation of bilevel optimization problems is as follows:
\begin{equation}
\label{p:primal}
\min\limits_{x\in\sR^{d_x}, y \in Y^*(x)} f(x,y) \quad {\rm s.t.} ~ Y^*(x)= \argmin\limits_{y\in\sR^{d_y}} g(x,y),
\end{equation}
where the functions $f$ and $g$ are called upper- and lower-level objective functions, respectively. There exist various methods designed for the case where the lower-level objective $g$ is strongly convex \citep{ghadimi2018approximation, chen2021closing, ji2021bilevel, ji2022will, dagreou2022framework, liu2022bome, hong2023two, kwon2023fully}; however, the requirement of strong convexity limits the applicability of Problem \eqref{p:primal}. We therefore focus on the case where $g$ is not strongly convex, which is prevalent in many machine learning applications \citep{sinha2017review,hong2023two}.

As $Y^*(x)$ may not be singleton, the hyper-objective reformulation \citep{dempe2002foundations} of Problem \eqref{p:primal} is given by
\begin{equation}
\label{p:hyper}
\min_{x \in \sR^{d_x}} \varphi(x) := \min_{y \in Y^*(x) } f(x,y),
\end{equation}
Since $\varphi(x)$ may not be convex, when it is differentiable, we always aim to find an $\epsilon$-stationary point \citep{kwon2023penalty,chen2024finding} of $\varphi(x)$. The definition of an $\epsilon$-stationary point for a differentiable function $\psi$ is defined as follows.
\begin{definition}
\label{def:eps}
A point $x$ is said to be an $\epsilon$-stationary point of a differentiable function $\psi(x)$ if $\| \nabla \psi(x) \| \le \epsilon$.
\end{definition}
Note that different definitions of the $\epsilon$-stationary points in the bilevel optimization literature \cite{ghadimi2018approximation,ji2021bilevel,ji2022will, kwon2023penalty,chen2024finding} lead to ambiguity when comparing the complexity results. Here, we adopt a consistent definition (Definition \ref{def:eps}) and adjust the complexity results of the compared methods accordingly.

When the lower-level function $g$ is strongly convex \citep{ghadimi2018approximation,ji2021bilevel,ji2022will,liu2022bome}, obtaining an $\epsilon$-stationary point of $\varphi$ is relatively straightforward, since the solution set $Y^*(x)$ reduces to a singleton $y^*(x)$, and $y^*(x)$ is differentiable w.r.t. $x$ by the implicit function theorem \citep{dontchev2009implicit} if $g$ is twice differentiable. Consequently, the hypergradient of $\varphi(x)$ is given by
\begin{equation}
\label{equ:hypersingle}
\nabla \varphi(x) = \nabla_x f(x,y^*(x)) + \nabla_{xy}^2 g(x,y^*(x)) \nabla_{yy}^2 g(x,y^*(x))^{-1} \nabla_y f(x,y^*(x)).
\end{equation}
Then, one can perform hypergradient-based methods \citep{ghadimi2018approximation,franceschi2018bilevel,ji2021bilevel,ji2022will} to obtain an $\epsilon$-stationary point of Problem \eqref{p:hyper}.

However, when the lower-level function $g$ is not strongly convex, even if $g$ is convex, obtaining an approximate stationary point of Problem \eqref{p:hyper} is difficult since the Hessian of $g$ may not be invertible and $\varphi(x)$ may be non-differentiable and discontinuous \citep{chen2024finding}, and therefore, the hypergradient \eqref{equ:hypersingle} does not exist. Specifically, \citep{chen2024finding} shows that when $g$ is convex, $\varphi(x)$ may be discontinuous, and even if $g$ is strictly convex where $\nabla \varphi(x)$ is guaranteed to exist, finding an approximate stationary point can still be intractable (cf. \cite[Theorem 3.2]{chen2024finding}).

Nevertheless, when the lower-level function $g$ (and the penalty function $\sigma f + g$) satisfy the Polyak-{\L}ojasiewicz (P{\L}) conditions \citep{polyak1963gradient,lojasiewicz1963topological} w.r.t. $y$ (cf. Definition \ref{def:PL}), a requirement much weaker than the strong convexity, which have broad applications in optimal control, neural networks, and reinforcement learning \citep{konda1999actor,hardt2016identity,sinha2017review,li2018algorithmic,liu2022loss,gaur2025sample}, several algorithms have been developed to obtain approximate solutions of Problem \eqref{p:primal}. Specifically, when $g$ satisfies the P{\L} condition w.r.t. $y$, \cite{shen2023penalty} introduced the (function value gap) penalty-based bilevel gradient descent (PBGD and V-PBGD) algorithms, which can find an $\epsilon$-stationary point of the penalty function $\sigma f + g$ with $\tilde{\gO}(1/\epsilon^{3})$ first-order oracles. \cite{xiao2023generalized} proposed the generalized alternating method for bilevel optimization (GALET), which can find an $\epsilon$-KKT point of Problem \eqref{p:primal} after at most $\tilde{\gO}(1/\epsilon^2)$ first- and second-order oracles. Under an additional assumption that the minimum eigenvalue of the Hessian of $g$ is positive-definite for any $y^*(x) \in Y^*(x)$, \cite{huang2023momentum} proposed the momentum-based gradient bilevel method (MGBiO), which can find an $\epsilon$-stationary point of Problem \eqref{p:hyper} within $\gO(1/\epsilon^2)$ first- and second-order oracles. When the penalty function $\sigma f + g$ is uniformly P{\L} w.r.t. $y$ for all $\sigma$ in a neighborhood of $0$, \cite{kwon2023penalty} established the differentiability of $\varphi(x)$, and provided a proximal variant of F${}^2$BA \citep{chen2025near} (Prox-F${}^2$BA), that can find an $\epsilon$-stationary point of Problem \eqref{p:hyper} with $\tilde{\gO}(1/\epsilon^3)$ first-order oracles. Under the same settings, \cite{chen2024finding} proved that their original F${}^2$BA algorithm \citep{chen2025near} can find an $\epsilon$-stationary point of Problem \eqref{p:hyper} with a near-optimal first-order oracle bound $\tilde\gO(1/\epsilon^2)$.

\textbf{Our motivation}: Note that the aforementioned algorithms \citep{ghadimi2018approximation,ji2021bilevel,ji2022will,kwon2023penalty,chen2024finding,chen2025near} for solving Problem \eqref{p:primal} determine their step sizes using problem-specific parameters, such as the Lipschitz constants of the objective functions and their derivatives, as well as the P{\L} parameters. However, estimating these parameters is often impractical, particularly in nonconvex bilevel optimization (NBO). Moreover, current adaptive bilevel methods \citep{yang2025tuning, shi2025adaptive} are restricted to cases where the lower-level function is strongly convex and require both first- and second-order information, making them computationally expensive and inapplicable to NBO problems. These challenges underscore the need for adaptive first-order algorithms for NBO that do not rely on prior knowledge of parameters.

\subsection{Contributions}
In this paper, we propose the Adaptive Fully First-order Bilevel Approximation (AF${}^2$BA) algorithm and its accelerated variant A${}^2$F${}^2$BA, which are the first methods to incorporate fully adaptive step size strategies, eliminating the need for parameter-specific prior knowledge. The contributions of this work are summarized as follows:
\begin{enumerate}[(i)]
\item We develop an adaptive algorithm and its accelerated variant for solving Problem \eqref{p:primal}. The proposed methods do not require prior knowledge of Lipschitz and P{\L} parameters, yet achieve iteration complexity results matching those of well-tuned, parameter-dependent algorithms.
\item We propose two adaptive subroutines, named AdaG-N and AC-GM, for solving subproblems \eqref{p:lower} and \eqref{p:penalty}. The complexity bounds of AdaG-N and AC-GM match those of the standard AdaGrad-Norm method \citep{xie2020linear,ward2020adagrad} and the AC-PGM method \citep{yagishita2025simple}, respectively, when applying to nonconvex problems with P{\L} conditions. Notably, for AC-GM, we derive a linear convergence rate with explicit parameter factors, a result not provided in the original AC-PGM method \citep{yagishita2025simple}.
\item The first-order oracle complexity $\tilde{\mathcal{O}}(1/\epsilon^2)$ of our accelerated adaptive algorithm, A${}^2$F${}^2$BA, matches those of well-tuned algorithms \citep{chen2024finding}, and
aligns with the $\mathcal{O}(1/\epsilon^2)$ complexity result of gradient descent for smooth nonconvex single-level optimization problems \citep{nesterov2018lectures} when ignoring the logarithmic factors.
\end{enumerate}

\subsection{Related works}
In this section, due to the vast volume of literature on bilevel optimization, we only discuss some relevant lines of our work.

\noindent
\textbf{Strongly convex lower-level}: Bilevel optimization was first introduced by \cite{bracken1973mathematical}. When the lower-level objective is strongly convex, numerous methods have been proposed \citep{ghadimi2018approximation,chen2021closing,ji2021bilevel,ji2022will,dagreou2022framework,liu2022bome,hong2023two,kwon2023fully}. Hypergradient-based approaches constitute a primary category, which includes methods based on approximate implicit differentiation (AID) \citep{domke2012generic, pedregosa2016hyperparameter}, iterative differentiation (ITD) \citep{maclaurin2015gradient, franceschi2017forward, shaban2019truncated, grazzi2020iteration}, Neumann series (NS) \citep{ghadimi2018approximation}, and conjugate gradient (CG) \citep{ji2021bilevel}. For a comprehensive overview, we refer readers to \cite{ji2021bilevel, ji2022will, kwon2023fully} and the references therein.

\noindent
\textbf{Non-strongly convex lower-level}: Beyond the algorithms reviewed in Section \ref{sec:intro}, several other approaches have been developed for the case where the lower-level objective is not strongly convex. Under some structural assumptions, \cite{liu2021value} proposed the bilevel value-function-based interior-point method (BVFIM) and established its asymptotic convergence to the optimal value. \cite{liu2021towards} introduced the initialization auxiliary and pessimistic trajectory truncated gradient method
(IAPTT-GM), which also converges asymptotically to the optimal value. Assuming the constant rank constraint qualification (CRCQ) and the P{\L} condition of the lower-level function, \cite{liu2022bome} proposed the bilevel optimization made easy (BOME) algorithm, and proved that their method converges to an $\epsilon$-KKT point of Problem \eqref{p:primal} within $\tilde{\gO}(1/\epsilon^3)$ first-order oracle calls. \cite{lu2024first} proposed a penalty method when the lower-level objective is convex (or with constraints), which converges to an approximate KKT point of Problem \eqref{p:primal}. For more details of these works and other methods, please refer to \cite{kwon2023penalty,shen2023penalty} and the references therein. Another line of research employs the difference-of-convex algorithm (DCA) \citep{le2018dc}; details can be found in \cite{gao2022value,ye2023difference}.

Another line of work focuses on the ``simple bilevel optimization (SBO)'' problems \citep{beck2014first,sabach2017first,jiang2023conditional,doron2023methodology,wang2024near,chen2024penalty,zhang2024functionally,cao2024accelerated}, which minimizes a function over the optimal solution set of another minimization problem. Existing methods typically assume the lower-level objective is either convex \citep{sabach2017first,wang2024near,doron2023methodology,chen2024penalty} or nonconvex \citep{samadi2025iteratively}, leading to a potentially non-singleton optimal solution set. Notably, several methods \citep{jiang2023conditional,chen2024penalty,cao2024accelerated,merchav2024fast} also assume the lower-level objective satisfies a H{\"o}lderian error bound condition \citep{pang1997error,bolte2017error,jiang2022holderian}, a generalization of the P{\L} condition \eqref{equ:def:PL}. For details of these SBO methods, we refer to \cite{merchav2023convex,doron2023methodology,jiang2023conditional,merchav2024fast} and the references therein.

\noindent
\textbf{Adaptive bilevel optimization}: The closest related works to our methods are the double (single)-loop tuning-free bilevel optimizers (D-TFBO and S-TFBO) for Euclidean problems proposed by \cite{yang2025tuning} and the adaptive Riemannian hypergradient descent (AdaRHD) method for Riemannian settings introduced by \cite{shi2025adaptive}, both designed for the case where the lower-level function is (geodesically) strongly convex. However, their underlying theoretical analyses differ fundamentally from ours, as their convergence guarantees rely essentially on the strong convexity and second-order information of the lower-level function. To the best of our knowledge, our work presents the first fully adaptive first-order methods with non-asymptotic convergence guarantees for solving general nonconvex bilevel optimization problems under P{\L} conditions.

Table \ref{table1} summarizes key studies with non-asymptotical convergence rate on bilevel optimization that are most relevant to our work, comparing their applicable scenarios, adaptivity, order of required oracles, and their computational complexity of first- and second-order information. For simplicity, constants such as the condition number are omitted. Furthermore, as discussed above, we adopt a unified definition of the $\epsilon$-stationary point and adjust the complexity results of the compared methods accordingly.
\begin{table*}[ht]
\centering
\caption{Comparisons of first-order and second-order complexities for reaching an $\epsilon$-stationary point. Here, ``SC'' and ``P{\L}'' represent that the lower-level functions are strongly convex and P{\L}, respectively. The notations ``Fir'' and ``Sec'' represent first- and second-order oracles, respectively. Additionally, $G_f$ and $G_g$ are the gradient complexities of $f$ and $g$, respectively. $JV_g$ and $HV_g$ are the complexities of computing the Jacobian-vector and Hessian-vector products of $g$. The notation $\tilde{\gO}$ denotes the omission of logarithmic terms in contrast to the standard $\gO$ notation. Furthermore, the notation ``NA'' represents that the corresponding complexity is not applicable.}
\label{table1}
\resizebox{0.8\textwidth}{!}{
\begin{tabular}{cccccccc}
\hline
Methods & Lower-level & Adaptive & Oracle & $G_f$ & $G_g$ & $JV_g$ & $HV_g$ \\
\hline
D-TFBO \citep{yang2025tuning} & \multirow{2}{*}{SC} & \multirow{2}{*}{$\boldsymbol{\mathcal{\checkmark}}$} & \multirow{2}{*}{Fir \& Sec} & $\gO(1/\epsilon^2)$ & $\gO(1/\epsilon^4)$ & $\gO(1/\epsilon^2)$ & $\gO(1/\epsilon^4)$\\
S-TFBO \citep{yang2025tuning} &   &  &  & $\tilde{\gO}(1/\epsilon^2)$ & $\tilde{\gO}(1/\epsilon^2)$ & $\tilde{\gO}(1/\epsilon^2)$ & $\tilde{\gO}(1/\epsilon^2)$\\
\hline
BOME \citep{liu2022bome} & P{\L} & $\boldsymbol{\text{\ding{55}}}$ & Fir & ${\gO}(1/\epsilon^3)$ & $\tilde{\gO}(1/\epsilon^3)$ & NA & NA\\
\hline
PBGD (V-PBGD) \citep{shen2023penalty} & P{\L} & $\boldsymbol{\text{\ding{55}}}$ & Fir & $\tilde{\gO}(1/\epsilon^3)$ & $\tilde{\gO}(1/\epsilon^3)$ & NA & NA\\
\hline
GALET \citep{xiao2023generalized} & P{\L} & $\boldsymbol{\text{\ding{55}}}$ & Fir \& Sec & $\tilde{\gO}(1/\epsilon^2)$ & ${\gO}(1/\epsilon^2)$ & ${\gO}(1/\epsilon^2)$ & $\tilde{\gO}(1/\epsilon^2)$\\
\hline
MGBiO \citep{huang2023momentum} & P{\L} & $\boldsymbol{\text{\ding{55}}}$ & Fir \& Sec & ${\gO}(1/\epsilon^2)$ & ${\gO}(1/\epsilon^2)$ & ${\gO}(1/\epsilon^2)$ & ${\gO}(1/\epsilon^2)$\\
\hline
Prox-F${}^2$BA \citep{kwon2023penalty} & P{\L} & $\boldsymbol{\text{\ding{55}}}$ & Fir & $\tilde{\gO}(1/\epsilon^3)$ & $\tilde{\gO}(1/\epsilon^3)$ & NA & NA\\
\hline
F${}^2$BA \citep{chen2024finding} & P{\L} & $\boldsymbol{\text{\ding{55}}}$ & Fir & $\tilde{\gO}(1/\epsilon^2)$ & $\tilde{\gO}(1/\epsilon^2)$ & NA & NA\\
\hline
{\textbf{AF${}^2$BA (Ours)}} & \multirow{2}{*}{P{\L}} & \multirow{2}{*}{$\boldsymbol{\mathcal{\checkmark}}$} &\multirow{2}{*}{Fir} & $\gO(1/\epsilon^6)$ & $\gO(1/\epsilon^6)$ & NA & NA\\
{\textbf{A${}^2$F${}^2$BA (Ours)}} &   &  &  & $\tilde{\gO}(1/\epsilon^2)$ & $\tilde{\gO}(1/\epsilon^2)$ & NA & NA\\
\hline
\end{tabular}
}
\end{table*}

\section{Preliminaries}
This section reviews standard definitions and preliminary results in bilevel optimization. All results presented here are drawn from the existing literature \citep{nesterov2018lectures,chen2025near,kwon2023penalty,shen2023penalty}, we restate them for conciseness.

\subsection{Definitions and assumptions}
Given a function $h(x): \sR^d \rightarrow \sR$, denote $X_h^* = \argmin_{x \in \sR^d} h(x)$ and $h^* = \min_{x \in \sR^d} h(x)$. The Polyak-{\L}ojasiewicz (P{\L}) condition \cite{polyak1967general,lojasiewicz1963topological} is defined as follows.
\begin{definition}
\label{def:PL}
A function $h(x): \sR^{d_x} \rightarrow \sR$ is said to be $\mu_h$-P{\L} for a $\mu_h > 0$ if for any $ x \in \sR^d$, it holds that
\begin{equation}
\label{equ:def:PL}
2 \mu_h (h(x) - h^*) \le \|\nabla h(x)\|^2.
\end{equation}
\end{definition}
The P{\L} condition is less restrictive than strong convexity, as it encompasses nonconvex functions and permits multiple minimizers \citep{chen2024finding}. Moreover, this condition is satisfied by many functions commonly used in machine learning \citep{hardt2016identity,sinha2017review,charles2018stability,li2018algorithmic,fazel2018global,liu2022loss,hong2023two,gaur2025sample}.

Given two sets, the Hausdorff distance between them is defined as follows.
\begin{definition}
Given two sets $S_1, S_2 \subseteq \sR^d$, the Hausdorff distance between $S_1$ and $S_2$ is defined as
\begin{equation*}
\dist(S_1,S_2) =
\max\left\{\sup_{x_1\in S_1} \inf_{x_2\in S_2} \|x_1 - x_2\|, \sup_{x_2\in S_2} \inf_{x_1\in S_1}\|x_1 - x_2\|\right\}.
\end{equation*}
Moreover, the distance between a point $x \in \sR^d$ and a set $S \subseteq \sR^d$ is definded as $\dist(s,S) = \dist(\{s\},S)$.
\end{definition}

As mentioned above, $\nabla\varphi$ may not exist when the lower-level is not strongly convex \citep{chen2024finding}. Nevertheless, under certain assumptions, \cite{kwon2023penalty} shows that the differential of $\varphi(x)$ can be obtained by exploring the differential of the following regular function:
\begin{equation}
\label{equ:varphisigma}
\varphi_{\sigma}(x) := \min_{y \in \sR^d} \left\{ f(x,y) + \frac{g(x,y) - g^*(x)}{\sigma}\right\},
\end{equation}
where $g^*(x) = \min_{y \in \sR^d} g(x,y)$.

Then, to ensure the differentiability of $\varphi_{\sigma}(x)$, \cite{kwon2023penalty} introduced a Proximal-EB condition for the penalty function $g_{\sigma} := \sigma f + g$ for all $\sigma$ in a neighborhood around $0$, which is equivalent to the P{\L} condition being satisfied for the same penalty function $g_{\sigma}$, as proved by \citep[Proposition D.1]{chen2024finding}. The P{\L} condition and other relevant assumptions, stated in \cite{kwon2023penalty,chen2024finding}, are formally presented as follows.
\begin{assumption}
\label{ass:basic}
\begin{enumerate}[(1)]
\item The penalty function $g_{\sigma}(x,y) = \sigma f(x,y) + g(x,y)$ is $\mu$-P{\L} w.r.t. $y$ for any $0 \le \sigma \le \bar{\sigma}$;
\item The upper-level function $f(x,y)$ is $l_f$-Lipschitz and has $L_f$-Lipschitz gradients;
\item The lower-level function $g(x,y)$ has $L_g$-Lipschitz gradients;
\item The upper-level function $f(x,y)$ has $\rho_f$-Lipschitz Hessians;
\item The lower-level function $g(x,y)$ has $\rho_g$-Lipschitz Hessians.
\end{enumerate}
\end{assumption}
Notably, we clarify that while the works of \cite{kwon2023penalty,chen2024finding} only assume the Lipschitz continuity of $f$ w.r.t. $y$, we further require the Lipschitz continuity of $f$ w.r.t. $x$, which is necessary to establish an upper bound for the hypergradient (cf. Lemma \ref{lem:boundhyperg}), and is also a common requirement in the literature of adaptive bilevel optimization \citep{yang2025tuning,shi2025adaptive}.

\begin{assumption}
\label{ass:inf}
The minimum of $\varphi$, denoted as $\varphi^*$, is lower-bounded.
\end{assumption}
Assumption \ref{ass:inf} concerns the existence of the minimum of the hyper-objective $\varphi$, which is a common requirement in the literature of adaptive and bilevel optimization problems \citep{ward2020adagrad,xie2020linear,yang2025tuning,chen2024finding,shi2025adaptive}.

\subsection{Preliminaries results}
Given a function $h(x): \sR^d \rightarrow \sR$, denote $X_h^* = \argmin_{x \in \sR^d} h(x)$ and $h^* = \min_{x \in \sR^d} h(x)$. We first recall some useful lemmas under the P{\L} conditions.
\begin{lemma}[{\citep[Theorem 2]{karimi2016linear}}]
\label{lem:PL}
If a function $h(x): \sR^d \rightarrow \sR$ is $\mu_h$-P{\L} and has $L_h$-Lipschitz gradients, then for any $x \in \sR^d$, it holds that
\begin{equation*}
\mu_h \dist(x,X_h^*) \le \|\nabla h(x)\| \le L_h \dist(x,X_h^*),
\end{equation*}
and
\begin{equation*}
\frac{\mu_h}{2} \dist^2(x,X_h^*) \le h(x) - h^*. 
\end{equation*}
\end{lemma}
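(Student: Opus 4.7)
The plan is to establish the quadratic growth bound $\tfrac{\mu_h}{2}\dist^2(x, X_h^*) \le h(x) - h^*$ as the central step, and then deduce the two-sided bound on $\|\nabla h(x)\|$ as consequences. The upper bound is essentially free: since any $x^* \in X_h^*$ is a critical point, $\nabla h(x^*) = 0$, so the $L_h$-Lipschitz gradient property yields $\|\nabla h(x)\| = \|\nabla h(x) - \nabla h(x^*)\| \le L_h \|x - x^*\|$, and taking the infimum over $x^* \in X_h^*$ gives $\|\nabla h(x)\| \le L_h \dist(x, X_h^*)$.

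The main work is the quadratic growth inequality, which I would prove via a continuous gradient-flow argument. Consider the ODE $\dot{x}(t) = -\nabla h(x(t))$ with $x(0) = x$, and let $\phi(t) := h(x(t)) - h^*$. The chain rule gives $\dot\phi(t) = -\|\nabla h(x(t))\|^2$, and the P{\L} condition in Definition \ref{def:PL} implies $\|\nabla h(x(t))\| \ge \sqrt{2\mu_h\,\phi(t)}$. Differentiating $\sqrt{\phi(t)}$ and combining these two facts yields
\begin{equation*}
\frac{d}{dt}\sqrt{\phi(t)} \;=\; -\frac{\|\nabla h(x(t))\|^2}{2\sqrt{\phi(t)}} \;\le\; -\sqrt{\mu_h/2}\;\|\nabla h(x(t))\| \;=\; -\sqrt{\mu_h/2}\;\|\dot x(t)\|.
\end{equation*}
Integrating from $0$ to $\infty$ bounds the total length of the trajectory by $\sqrt{2(h(x)-h^*)/\mu_h}$, so $x(t)$ converges to some $x^\infty$. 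At the limit $\nabla h(x^\infty) = 0$, and a second application of P{\L} forces $h(x^\infty) = h^*$, hence $x^\infty \in X_h^*$. Consequently $\dist(x, X_h^*) \le \|x - x^\infty\| \le \sqrt{2(h(x) - h^*)/\mu_h}$, which rearranges to the quadratic growth bound.

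With quadratic growth in hand, the lower bound on $\|\nabla h(x)\|$ is immediate by chaining: the P{\L} condition gives $\|\nabla h(x)\|^2 \ge 2\mu_h(h(x) - h^*)$, and the quadratic growth just proved gives $2\mu_h(h(x) - h^*) \ge \mu_h^2 \dist^2(x, X_h^*)$, whence $\|\nabla h(x)\| \ge \mu_h \dist(x, X_h^*)$. The main obstacle is the convergence step for the gradient flow, where one must invoke an ODE existence theorem on the sublevel set $\{h \le h(x)\}$ and identify the limit point as a minimizer. An entirely discrete alternative, which is the route taken in the cited \citep[Theorem 2]{karimi2016linear}, runs gradient descent with step size $1/L_h$, telescopes an analogous decrement bound on $\sqrt{h(x_k) - h^*}$ along iterates, and passes to the limit; this avoids ODE theory altogether while yielding the same constants.
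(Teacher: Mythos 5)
The paper does not prove this lemma at all; it is imported verbatim as \citep[Theorem 2]{karimi2016linear}, so there is no in-paper argument to compare against. Your proof is correct on its own terms. The upper bound from $\nabla h(x^*)=0$ plus Lipschitz gradients is exactly right, and the chain P{\L} $\Rightarrow$ quadratic growth $\Rightarrow$ error bound ($\|\nabla h(x)\|\ge \mu_h\,\dist(x,X_h^*)$) is the same logical ordering used by Karimi et al. The one genuinely different ingredient is your continuous-time gradient-flow derivation of quadratic growth: the length estimate $\int_0^\infty\|\dot x(t)\|\,dt\le\sqrt{2(h(x)-h^*)/\mu_h}$ obtained by integrating $\frac{d}{dt}\sqrt{\phi(t)}\le-\sqrt{\mu_h/2}\,\|\dot x(t)\|$ is valid (global existence of the flow is free since $\nabla h$ is globally Lipschitz, and the limit point is a critical point, hence a minimizer by a second application of P{\L}; the only technicality you should flag is integrating only up to the first time $\phi$ vanishes, since $\sqrt{\phi}$ is not differentiable there). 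This buys a clean, constant-exact argument at the cost of a small amount of ODE theory, whereas the discrete telescoping of $\sqrt{h(x_k)-h^*}$ along gradient-descent iterates that you mention at the end is the elementary route of the cited reference and yields the same constants. Either version suffices for the lemma as stated.
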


Under the P{\L} condition, the smallest nonzero eigenvalue of the Hessian at any minimum is bounded below.
\begin{lemma}[{\citep[Lemma G.6]{chen2024finding}}]
\label{lem:eigen}
If a twice differentiable function $h(x): \sR^d \rightarrow \sR$ is $\mu_h$-P{\L}, then for any $x^* \in X_h^*$, it holds that
\begin{equation*}
{\lambda}_{\min}^+ \left( \nabla^2 h(x^*) \right) \ge \mu_h, 
\end{equation*}
where ${\lambda}_{\min}^+(\cdot)$ represents the smallest non-zero eigenvalue.
\end{lemma}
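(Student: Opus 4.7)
The plan is to linearize everything around $x^*$ and pass to a second-order Taylor expansion, then apply the P{\L} inequality along a direction $v$ and take a limit. Since $x^* \in X_h^*$, we have $\nabla h(x^*) = 0$ and $h(x^*) = h^*$, and the second-order condition for a minimizer gives $\nabla^2 h(x^*) \succeq 0$, so every eigenvalue is nonnegative and it suffices to show each \emph{positive} eigenvalue is at least $\mu_h$.

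First I would fix an arbitrary direction $v \in \sR^d$ with $\|v\| = 1$ and write, for small $t > 0$,
\begin{equation*}
h(x^* + tv) = h^* + \tfrac{t^2}{2}\, v^\top \nabla^2 h(x^*) v + o(t^2), \qquad \nabla h(x^* + tv) = t\, \nabla^2 h(x^*) v + o(t),
\end{equation*}
using the twice differentiability of $h$ together with $\nabla h(x^*) = 0$. Substituting these into the P{\L} inequality $2\mu_h\bigl(h(x^*+tv) - h^*\bigr) \le \|\nabla h(x^*+tv)\|^2$ gives
\begin{equation*}
\mu_h t^2\, v^\top \nabla^2 h(x^*) v + o(t^2) \le t^2\, \|\nabla^2 h(x^*) v\|^2 + o(t^2).
\end{equation*}
Dividing by $t^2$ and letting $t \downarrow 0$ yields the key estimate
\begin{equation*}
\mu_h\, v^\top \nabla^2 h(x^*) v \le \|\nabla^2 h(x^*) v\|^2 \qquad \text{for all } v \in \sR^d.
\end{equation*}

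Next I would instantiate this with an eigenvector $v$ of the symmetric matrix $\nabla^2 h(x^*)$ associated with a nonzero eigenvalue $\lambda$, normalized so that $\|v\|=1$. Since $\nabla^2 h(x^*) \succeq 0$, necessarily $\lambda > 0$. Plugging $\nabla^2 h(x^*) v = \lambda v$ into the displayed inequality gives $\mu_h \lambda \le \lambda^2$, and dividing by $\lambda > 0$ yields $\lambda \ge \mu_h$. As this holds for every positive eigenvalue, the smallest nonzero eigenvalue $\lambda_{\min}^+(\nabla^2 h(x^*))$ is bounded below by $\mu_h$, which is the claim.

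The only subtlety I anticipate is the handling of the $o(t^2)$ remainder, in particular verifying that squaring the first-order expansion of $\nabla h$ genuinely produces an $o(t^2)$ error (which it does, by expanding $\|t\nabla^2 h(x^*)v + o(t)\|^2$ and using Cauchy--Schwarz on the cross term). Everything else is a direct application of the PSD property at a minimizer and an eigenvector choice, so I do not expect any further obstacles.
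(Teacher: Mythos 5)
This lemma is imported verbatim from \citep[Lemma G.6]{chen2024finding}; the paper itself gives no proof, so there is nothing internal to compare against. Your argument is correct and is the standard one: the second-order necessary condition at the minimizer gives $\nabla^2 h(x^*)\succeq 0$, the Peano-form Taylor expansions of $h$ and $\nabla h$ at $x^*$ combined with the P{\L} inequality yield $\mu_h\, v^\top \nabla^2 h(x^*) v \le \|\nabla^2 h(x^*) v\|^2$ for every unit $v$, and instantiating $v$ as an eigenvector with eigenvalue $\lambda>0$ gives $\lambda\ge\mu_h$; the treatment of the $o(t^2)$ remainders, including the cross term from squaring the gradient expansion, is handled correctly.
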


Given any $0 \le \sigma \le \bar{\sigma}$, denote $Y_{\sigma}^*(x) := \argmin_{y \in \sR^{d_y}} g_{\sigma}(x,y)$. \cite{chen2024finding} establishes the Lipschitz continuity of the solution set $Y_{\sigma}^*(x)$ w.r.t. $\sigma$ and $x$.
\begin{lemma}[{\citep[Lemma 4.1]{chen2024finding}}]
\label{lem:Ylips}
Suppose that Assumption \ref{ass:basic} holds. Then, for any $0 \le \sigma_1, \sigma_2 \le \bar{\sigma}$, we have
\begin{equation*}
\dist( Y_{\sigma_1}^*(x_1), Y_{\sigma_2}^*(x_2)) \le \frac{l_f}{\mu} \| \sigma_1 - \sigma_2 \| + \frac{\sigma_1 L_f + L_g}{\mu} \| x_1 -x_2 \|.
\end{equation*}
\end{lemma}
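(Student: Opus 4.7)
The plan is to exploit the P{\L} error bound from Lemma \ref{lem:PL}, which converts a gradient norm at an arbitrary point into a distance-to-optimum estimate, and then to bound the gradient of $g_{\sigma_2}(x_2,\cdot)$ at a minimizer of $g_{\sigma_1}(x_1,\cdot)$ using the first-order optimality condition plus the Lipschitz hypotheses in Assumption \ref{ass:basic}.

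Concretely, I would fix an arbitrary $y_1^\ast \in Y_{\sigma_1}^\ast(x_1)$. Since $g_{\sigma_1}(x_1,\cdot)$ is $\mu$-P{\L} and differentiable, first-order optimality gives $\nabla_y g_{\sigma_1}(x_1,y_1^\ast)=0$, i.e.,
\begin{equation*}
\sigma_1 \nabla_y f(x_1,y_1^\ast) + \nabla_y g(x_1,y_1^\ast) = 0.
\end{equation*}
Applying Lemma \ref{lem:PL} to $g_{\sigma_2}(x_2,\cdot)$, which is also $\mu$-P{\L} by Assumption \ref{ass:basic}(1), yields
\begin{equation*}
\mu\,\dist(y_1^\ast, Y_{\sigma_2}^\ast(x_2)) \le \bigl\| \sigma_2 \nabla_y f(x_2,y_1^\ast) + \nabla_y g(x_2,y_1^\ast) \bigr\|.
\end{equation*}

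Next, I would subtract $0=\sigma_1 \nabla_y f(x_1,y_1^\ast)+\nabla_y g(x_1,y_1^\ast)$ inside the right-hand side and regroup as
\begin{equation*}
(\sigma_2-\sigma_1)\nabla_y f(x_2,y_1^\ast) + \sigma_1\bigl(\nabla_y f(x_2,y_1^\ast)-\nabla_y f(x_1,y_1^\ast)\bigr) + \bigl(\nabla_y g(x_2,y_1^\ast)-\nabla_y g(x_1,y_1^\ast)\bigr).
\end{equation*}
The triangle inequality, combined with $\|\nabla_y f\|\le l_f$ (from the $l_f$-Lipschitz continuity of $f$ in Assumption \ref{ass:basic}(2)), the $L_f$-Lipschitz continuity of $\nabla f$, and the $L_g$-Lipschitz continuity of $\nabla g$, bounds this by $l_f|\sigma_1-\sigma_2| + (\sigma_1 L_f + L_g)\|x_1-x_2\|$. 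Dividing by $\mu$ delivers the desired estimate for the one-sided sup over $Y_{\sigma_1}^\ast(x_1)$; the symmetric direction follows by swapping the roles of $(\sigma_1,x_1)$ and $(\sigma_2,x_2)$, and taking the max yields the Hausdorff bound (the statement as written corresponds to the case $\sigma_1 \ge \sigma_2$, which is WLOG).

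No step looks technically hard: the whole argument is a single application of the P{\L} error bound together with a gradient-splitting trick that uses the optimality relation at $(\sigma_1,x_1,y_1^\ast)$. The only mildly delicate point is making sure the P{\L} constant $\mu$ is genuinely uniform in $\sigma\in[0,\bar\sigma]$ so that the same $\mu$ appears on both sides of the estimate, which is exactly what Assumption \ref{ass:basic}(1) guarantees.
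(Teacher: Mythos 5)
Your argument is correct and is essentially the standard proof of this result: the paper itself does not reprove the lemma (it is imported verbatim from \citet[Lemma 4.1]{chen2024finding}), and the proof there is exactly your combination of the P{\L} error bound $\mu\,\dist(y_1^\ast, Y_{\sigma_2}^\ast(x_2)) \le \|\nabla_y g_{\sigma_2}(x_2,y_1^\ast)\|$ with the first-order optimality condition at $y_1^\ast$ and the gradient-splitting estimate. One minor remark: the ``WLOG $\sigma_1 \ge \sigma_2$'' is not needed, since in the reverse direction you can regroup the cross term as $\sigma_1\bigl(\nabla_y f(x_1,y_2^\ast)-\nabla_y f(x_2,y_2^\ast)\bigr) + (\sigma_1-\sigma_2)\nabla_y f(x_2,y_2^\ast)$, which again produces the coefficient $\sigma_1 L_f + L_g$, so both one-sided suprema satisfy the stated bound as written.
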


As previously discussed, the differential of $\varphi(x)$ can be obtained by exploring the differential of $\varphi_{\sigma}$ \eqref{equ:varphisigma} \citep{kwon2023penalty}. Therefore, before introducing the gradient of $\varphi$, we first recall the following result regarding the gradient of $\varphi_{\sigma}$, which has been studied in \cite{shen2023penalty,kwon2023penalty,chen2024finding}.
\begin{lemma}[{\citep[Lemma A.2]{kwon2023penalty}}]
Suppose that Assumption \ref{ass:basic} holds. Then, $\nabla \varphi_{\sigma}(x)$ exists and has the following form
\begin{equation}
\label{equ:lem:nablasigma}
\nabla \varphi_{\sigma}(x) = \nabla_x f(x,y_{\sigma}^*(x))) + \frac{\nabla_x g(x,y_{\sigma}^*(x)) - \nabla_x g(x,y^*(x))}{\sigma}
\end{equation}
for any $y^*(x) \in Y^*(x)$, $y_{\sigma}^*(x) \in Y_{\sigma}^*(x)$.
\end{lemma}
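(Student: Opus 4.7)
The plan is to rewrite the inner problem in \eqref{equ:varphisigma} as a rescaled difference of two value functions and then differentiate each piece via an envelope argument tailored to the P{\L} setting. Multiplying the definition of $\varphi_\sigma$ by $\sigma$ yields the clean identity
\begin{equation*}
\varphi_\sigma(x) = \frac{g_\sigma^*(x) - g^*(x)}{\sigma}, \qquad g_\sigma^*(x) := \min_{y} g_\sigma(x,y),\quad g^*(x) = \min_y g(x,y).
\end{equation*}
By Assumption \ref{ass:basic}(1), both $g_\sigma$ (for every $0 \le \sigma \le \bar\sigma$) and $g = g_0$ are $\mu$-P{\L} in $y$, and by Assumption \ref{ass:basic}(2)--(3) both have Lipschitz gradients in $(x,y)$. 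It therefore suffices to prove the auxiliary claim that for any $h(x,y)$ satisfying these two properties, the value function $h^*(x) := \min_y h(x,y)$ is differentiable on $\sR^{d_x}$ with $\nabla h^*(x) = \nabla_x h(x, y^*(x))$ for every selection $y^*(x) \in \argmin_y h(x,\cdot)$. Applying the claim to $h = g_\sigma$ and $h = g$, subtracting, and expanding $\nabla_x g_\sigma = \sigma \nabla_x f + \nabla_x g$ then produces \eqref{equ:lem:nablasigma}.

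To prove the auxiliary claim I would carry out a two-sided envelope estimate. Fix $x$, pick an arbitrary $x' \in \sR^{d_x}$, and select $y^*(x)$ and $y^*(x')$ in the two solution sets. Smoothness of $h$ in $x$ together with optimality of $y^*(x')$ at $x'$ gives
\begin{equation*}
h^*(x') \le h(x', y^*(x)) \le h^*(x) + \langle \nabla_x h(x, y^*(x)), x' - x \rangle + \tfrac{L}{2}\|x' - x\|^2,
\end{equation*}
and symmetrically $h^*(x') \ge h^*(x) + \langle \nabla_x h(x', y^*(x')), x' - x \rangle - \tfrac{L}{2}\|x' - x\|^2$. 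Lipschitz continuity of $\nabla_x h$ in $(x,y)$ combined with the Lipschitz continuity of the solution map $x \mapsto Y^*(x)$ supplied by Lemma \ref{lem:Ylips} yields $\|\nabla_x h(x', y^*(x')) - \nabla_x h(x, y^*(x))\| = \mathcal{O}(\|x' - x\|)$, so the two bounds pinch $h^*(x')$ to $h^*(x) + \langle \nabla_x h(x, y^*(x)), x' - x\rangle + \mathcal{O}(\|x'-x\|^2)$. This simultaneously establishes differentiability of $h^*$ at $x$, identifies its gradient, and, as a by-product, shows that $\nabla_x h(x, y^*(x))$ is independent of the selection of $y^*(x) \in Y^*(x)$.

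The main obstacle is precisely the non-uniqueness of the lower-level minimizer: without strong convexity, neither the implicit function theorem nor the classical Danskin theorem applies directly, and a priori the quantity $\nabla_x h(x, y^*(x))$ could depend on which $y^*(x)$ is chosen. Lemma \ref{lem:Ylips} is the key tool that circumvents this difficulty, since it upgrades the minimizer set from merely nonempty to Hausdorff-Lipschitz in $x$; this is exactly what is needed to keep the error term from transporting $\nabla_x h$ between $(x', y^*(x'))$ and $(x, y^*(x))$ at the $\mathcal{O}(\|x'-x\|^2)$ scale, which is then absorbed into the differentiability remainder. Once the auxiliary claim is established, the remaining derivation is purely algebraic and produces the stated formula \eqref{equ:lem:nablasigma}.
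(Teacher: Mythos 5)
The paper does not prove this lemma itself --- it is imported verbatim from \cite[Lemma A.2]{kwon2023penalty} --- so your proposal can only be judged on its own merits. Your overall route is the right one: the identity $\varphi_\sigma(x) = (g_\sigma^*(x) - g^*(x))/\sigma$ is correct, reducing the lemma to differentiability of value functions of P{\L} objectives is the natural move, and the two-sided envelope estimate is the standard mechanism (and essentially the one used in the cited source). The two one-sided bounds you write down are both valid.

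There is, however, a genuine gap in the transport step. You claim that Lemma \ref{lem:Ylips} plus Lipschitz continuity of $\nabla_x h$ yields $\|\nabla_x h(x', y^*(x')) - \nabla_x h(x, y^*(x))\| = \mathcal{O}(\|x'-x\|)$ for \emph{arbitrary} selections. Lemma \ref{lem:Ylips} controls the Hausdorff distance between the sets $Y^*(x)$ and $Y^*(x')$, so it only guarantees that $y^*(x')$ is within $\mathcal{O}(\|x'-x\|)$ of \emph{some} point $\tilde y \in Y^*(x)$ --- not of the particular $y^*(x)$ you fixed, which may sit far away inside a large solution set. Taking $x' = x$ in your claimed estimate shows it already contains, as a special case, the assertion that $\nabla_x h(x,\cdot)$ is constant on $Y^*(x)$; so that selection-independence cannot be obtained ``as a by-product'' of the pinching --- the pinching presupposes it, which is circular. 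The fix uses the same ingredients but must be done first: for $y_1, y_2 \in Y^*(x)$ and $x' = x + tv$, pick $w_i \in Y^*(x')$ with $\|w_i - y_i\| \le Ct$ (the other half of the Hausdorff bound), expand $h^*(x') = h(x', w_i) = h^*(x) + t\langle \nabla_x h(x,y_i), v\rangle + \mathcal{O}(t^2)$ using $\nabla_y h(x, y_i) = 0$, and subtract to conclude $\nabla_x h(x,y_1) = \nabla_x h(x,y_2)$. With that established, your envelope argument closes correctly. A cosmetic point: the first inequality $h^*(x') \le h(x', y^*(x))$ follows from the definition of the minimum, not from ``optimality of $y^*(x')$ at $x'$'' as you state.
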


Subsequently, \cite{kwon2023penalty} demonstrates that the gradient of $\varphi$ can be derived by taking the limit of $\nabla \varphi_{\sigma}$ as $\sigma \rightarrow 0$, and that the discrepancies between $\varphi$ and $\varphi_{\sigma}$ and between their gradients can be bounded.
\begin{lemma}[{\citep[Theorem 3.8]{kwon2023penalty}}]
\label{lem:nablalimit}
Suppose that Assumption \ref{ass:basic} holds. Then, $\nabla \varphi(x)$ exists and can be defined as
\[
\nabla \varphi(x) = \lim_{\sigma \rightarrow 0^+} \nabla \varphi_{\sigma}(x).
\]
Furthermore, for any $0 \le \sigma \le \min\{\rho_g/\rho_f, \bar{\sigma}\}$, it holds that
\begin{equation*}
|\varphi_{\sigma}(x) - \varphi(x)| = C_{\sigma} \sigma, ~~\text{and}~~ 
\|\nabla \varphi_{\sigma}(x) - \nabla \varphi(x)\| = \bar{C}_{\sigma} \sigma,
\end{equation*}
where $\bar{\sigma}$, $C_{\sigma}$, and $\bar{C}_{\sigma}$ are constants related to $\mu$ and the Lipschitz constants that defined in \cite{kwon2023penalty}.
\end{lemma}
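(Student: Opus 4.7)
The plan is to first prove the value-gap bound, then use the explicit formula \eqref{equ:lem:nablasigma} to exhibit the limit of $\nabla \varphi_\sigma(x)$ as $\sigma \to 0^+$ with an $O(\sigma)$ rate, and finally identify that limit with $\nabla \varphi(x)$ via a fundamental-theorem-of-calculus argument. For the value gap, I would pick any $y^*(x) \in Y^*(x)$ and substitute it into the minimization defining $\varphi_\sigma(x)$ in \eqref{equ:varphisigma}; the penalty $g(x,y^*(x)) - g^*(x)$ vanishes, so $\varphi_\sigma(x) \le f(x,y^*(x))$, and minimizing over $Y^*(x)$ yields $\varphi_\sigma(x) \le \varphi(x)$. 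For the reverse direction, take any $y_\sigma^*(x) \in Y_\sigma^*(x)$, drop the nonnegative penalty to get $\varphi_\sigma(x) \ge f(x,y_\sigma^*(x))$, and use Lemma \ref{lem:Ylips} (with $\sigma_1 = \sigma$, $\sigma_2 = 0$, $x_1 = x_2 = x$) to select $y^*(x) \in Y^*(x)$ within distance $(l_f/\mu)\sigma$; the Lipschitz continuity of $f$ then gives $\varphi_\sigma(x) \ge \varphi(x) - (l_f^2/\mu)\sigma$, so one can take $C_\sigma = l_f^2/\mu$.

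For the gradient limit, I would choose $y^*(x)$ as the metric projection of $y_\sigma^*(x)$ onto the closed set $Y^*(x)$, so that $\|y_\sigma^*(x) - y^*(x)\| = O(\sigma)$ and the displacement lies in the normal cone to $Y^*(x)$ at $y^*(x)$. A first-order Taylor expansion of $\nabla_x g$ around $y^*(x)$ with Lipschitz-Hessian remainder, substituted into \eqref{equ:lem:nablasigma}, gives
\begin{equation*}
\nabla \varphi_\sigma(x) = \nabla_x f(x, y^*(x)) + \nabla_{xy}^2 g(x, y^*(x)) \cdot \frac{y_\sigma^*(x) - y^*(x)}{\sigma} + O(\sigma).
\end{equation*}
Combining the optimality condition $\sigma \nabla_y f(x, y_\sigma^*(x)) + \nabla_y g(x, y_\sigma^*(x)) = \bzero$ with $\nabla_y g(x, y^*(x)) = \bzero$ and an analogous Taylor expansion of $\nabla_y g$ yields
\begin{equation*}
\nabla_{yy}^2 g(x, y^*(x)) \cdot \frac{y_\sigma^*(x) - y^*(x)}{\sigma} = -\nabla_y f(x, y^*(x)) + O(\sigma).
\end{equation*}
By Lemma \ref{lem:eigen}, the least nonzero eigenvalue of $\nabla_{yy}^2 g(x, y^*(x))$ is at least $\mu$, and by the projection choice the displacement $(y_\sigma^*(x) - y^*(x))/\sigma$ lies in the range of this Hessian; inverting on the range produces the explicit candidate limit
\begin{equation*}
\tilde G(x) := \nabla_x f(x,y^*(x)) - \nabla_{xy}^2 g(x,y^*(x)) \, [\nabla_{yy}^2 g(x,y^*(x))]^{\dagger} \, \nabla_y f(x,y^*(x)),
\end{equation*}
satisfying $\|\nabla \varphi_\sigma(x) - \tilde G(x)\| = O(\sigma)$, with the constant $\bar C_\sigma$ depending only on $\mu$ and the Lipschitz constants in Assumption \ref{ass:basic}.

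To identify $\tilde G$ with $\nabla \varphi$, I would combine the pointwise convergences $\varphi_\sigma \to \varphi$ and $\nabla \varphi_\sigma \to \tilde G$ (both with explicit $O(\sigma)$ rates) through the fundamental theorem of calculus $\varphi_\sigma(x') - \varphi_\sigma(x) = \int_0^1 \langle \nabla \varphi_\sigma(x + t(x'-x)), x'-x\rangle \, dt$ and pass to the limit $\sigma \to 0^+$ using dominated convergence. The resulting identity $\varphi(x') - \varphi(x) = \int_0^1 \langle \tilde G(x + t(x'-x)), x'-x\rangle \, dt$, together with the continuity of $\tilde G$, implies that $\varphi$ is differentiable with $\nabla \varphi = \tilde G$, completing the proof.

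The main obstacle I anticipate is handling the genuinely P{\L} (rather than strongly convex) regime: the Hessian $\nabla_{yy}^2 g(x, y^*(x))$ may be singular, so the classical formula with $[\nabla_{yy}^2 g]^{-1}$ breaks down. Two ingredients rescue the argument. First, Lemma \ref{lem:eigen} supplies a uniform lower bound $\mu$ on the nonzero eigenvalues, quantifying the pseudo-inverse. Second, the metric-projection choice of $y^*(x)$ forces the displacement into the range of the Hessian, since the tangent space of the minimizer manifold at $y^*(x)$ coincides with the kernel of $\nabla_{yy}^2 g(x, y^*(x))$ and the projection residual is orthogonal to that kernel. One still has to verify carefully that the Taylor remainders are genuinely $O(\|y_\sigma^*(x) - y^*(x)\|^2) = O(\sigma^2)$, so that after dividing by $\sigma$ they retain an $O(\sigma)$ size; this is the only routine but delicate bookkeeping step.
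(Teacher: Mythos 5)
This lemma is not proved in the paper at all: it is imported verbatim as \cite[Theorem 3.8]{kwon2023penalty}, so there is no in-paper proof to compare against. Your reconstruction is nevertheless worth assessing on its own terms. The value-gap half is correct and complete: the one-sided bound $\varphi_\sigma(x)\le\varphi(x)$ by plugging in $y^*(x)$, and the reverse bound via nonnegativity of the penalty, Lemma \ref{lem:Ylips}, and $l_f$-Lipschitzness of $f$, giving $C_\sigma=l_f^2/\mu$ (the ``$=$'' in the statement should of course be ``$\le$''). The gradient half follows the strategy of \cite[Lemma G.7]{chen2024finding} (Taylor expansion of \eqref{equ:lem:nablasigma} around the projection of $y_\sigma^*(x)$ onto $Y^*(x)$, plus the stationarity conditions) rather than the proximal-error-bound machinery of the original Kwon et al.\ proof; that is a legitimate route and the bookkeeping of the $O(\sigma^2)$ Taylor remainders is as routine as you say.

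The genuine gap is the sentence you pass over in one clause: ``the tangent space of the minimizer manifold at $y^*(x)$ coincides with the kernel of $\nabla_{yy}^2 g(x,y^*(x))$ and the projection residual is orthogonal to that kernel.'' Under the P{\L} condition (as opposed to strong convexity) nothing in Assumption \ref{ass:basic} directly asserts that $Y^*(x)$ is a $C^1$ manifold, nor that its tangent space equals $\ker\nabla_{yy}^2 g(x,y^*(x))$. This Morse--Bott-type structure is a nontrivial consequence of P{\L} plus $C^2$ smoothness and is precisely what the cited references have to establish before the pseudo-inverse formula makes sense. Without it, your linear system $\nabla_{yy}^2 g(x,y^*)\,v=-\nabla_y f(x,y^*)+O(\sigma)$ with $v=(y_\sigma^*(x)-y^*(x))/\sigma$ only pins down the component of $v$ in the range of the Hessian; the kernel component is uncontrolled, and the identification $v=-[\nabla_{yy}^2 g]^\dagger\nabla_y f+O(\sigma)$ fails. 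So the argument is architecturally sound but rests on an unproved structural lemma that carries most of the real difficulty of extending the implicit-function-theorem computation from the strongly convex to the P{\L} regime. A secondary, smaller omission: the final identification $\nabla\varphi=\tilde G$ via the fundamental theorem of calculus requires continuity of $\tilde G$ in $x$, which you invoke but do not verify; it is supplied by the Lipschitz estimates behind Lemma \ref{lem:gradLip} and should at least be flagged as needed.
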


More specifically, beyond the limited expression of $\nabla_{\sigma}\varphi(x)$, the explicit form of $\nabla \varphi(x)$ has been studied in the literature \citep{chen2024finding}.
\begin{lemma}[{\citep[Lemma G.7]{chen2024finding}}]
\label{lem:nablavarphi}
Suppose that Assumption \ref{ass:basic} holds. Then, the gradient $\nabla \varphi(x)$ has the following form
\begin{equation}
\label{equ:lem:nablavarphi}
\nabla \varphi(x) = \nabla_x f(x,y^*(x)) - \nabla_{xy}^2 g(x,y^*(x)) \left(\nabla_{yy}^2 g(x,y^*(x)) \right)^\dagger \nabla_y f(x,y^*(x))
\end{equation}
for any $y^*(x) \in Y^*(x)$. Here $(\cdot)^\dagger$ represents the Moore--Penrose inverse \citep{penrose1955generalized}.
\end{lemma}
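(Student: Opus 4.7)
The plan is to derive the closed-form expression by taking the limit $\sigma \to 0^+$ in formula \eqref{equ:lem:nablasigma} for $\nabla \varphi_{\sigma}(x)$, which is permitted by Lemma \ref{lem:nablalimit}. Fix $x$, pick any $y_{\sigma}^*(x) \in Y_{\sigma}^*(x)$ for $\sigma \in (0, \bar{\sigma}]$, and let $y^*(x) \in Y^*(x)$ denote the Euclidean projection of $y_{\sigma}^*(x)$ onto $Y^*(x)$. Lemma \ref{lem:Ylips} with $\sigma_1 = \sigma$, $\sigma_2 = 0$, $x_1 = x_2 = x$ yields $\|y_{\sigma}^*(x) - y^*(x)\| \le (l_f/\mu)\sigma$.

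First I would exploit the first-order optimality condition for $y_{\sigma}^*(x)$, namely $\sigma \nabla_y f(x, y_{\sigma}^*(x)) + \nabla_y g(x, y_{\sigma}^*(x)) = 0$. Using $\nabla_y g(x, y^*(x)) = 0$ together with the $\rho_g$-Lipschitz continuity of $\nabla^2 g$ from Assumption \ref{ass:basic}, a second-order Taylor expansion of $\nabla_y g(x, \cdot)$ at $y^*(x)$ yields
\begin{equation*}
\sigma \nabla_y f(x, y_{\sigma}^*(x)) + \nabla_{yy}^2 g(x, y^*(x))\bigl(y_{\sigma}^*(x) - y^*(x)\bigr) = O(\sigma^2).
\end{equation*}
In parallel, a first-order Taylor expansion of $\nabla_x g(x, \cdot)$ at $y^*(x)$ gives
\begin{equation*}
\frac{\nabla_x g(x,y_{\sigma}^*(x)) - \nabla_x g(x,y^*(x))}{\sigma} = \nabla_{xy}^2 g(x, y^*(x)) \cdot \frac{y_{\sigma}^*(x) - y^*(x)}{\sigma} + O(\sigma).
\end{equation*}

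The crux is to show that the quantity $(y_{\sigma}^*(x) - y^*(x))/\sigma$ converges to $-\bigl(\nabla_{yy}^2 g(x, y^*(x))\bigr)^\dagger \nabla_y f(x, y^*(x))$. By Lemma \ref{lem:eigen}, $\lambda_{\min}^+(\nabla_{yy}^2 g(x, y^*(x))) \ge \mu$, so the Hessian is invertible on its range with operator-norm bound $1/\mu$. Since $y^*(x)$ is the projection of $y_{\sigma}^*(x)$ onto the minimizer set of the $\mu$-P{\L} function $g(x,\cdot)$, the displacement $y_{\sigma}^*(x) - y^*(x)$ is normal to $Y^*(x)$ at $y^*(x)$; and the tangent cone of $Y^*(x)$ at $y^*(x)$ coincides with $\ker\bigl(\nabla_{yy}^2 g(x, y^*(x))\bigr)$ (a standard consequence of the local manifold structure of P{\L} minimizer sets together with Lemma \ref{lem:eigen}). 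Hence $y_{\sigma}^*(x) - y^*(x)$ lies in the range of $\nabla_{yy}^2 g(x, y^*(x))$, so I may invert using the Moore--Penrose pseudoinverse to obtain
\begin{equation*}
\frac{y_{\sigma}^*(x) - y^*(x)}{\sigma} = -\bigl(\nabla_{yy}^2 g(x, y^*(x))\bigr)^\dagger \nabla_y f(x, y_{\sigma}^*(x)) + O(\sigma).
\end{equation*}

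Substituting this into \eqref{equ:lem:nablasigma}, using the continuity of $\nabla_x f$, $\nabla_y f$ and $\nabla_{xy}^2 g$, and the fact that $y_{\sigma}^*(x) \to y^*(x)$ as $\sigma \to 0^+$, I would pass to the limit $\sigma \to 0^+$ via Lemma \ref{lem:nablalimit} to arrive at \eqref{equ:lem:nablavarphi}. Finally, the fact that the right-hand side is independent of the choice of $y^*(x) \in Y^*(x)$ follows because $\nabla \varphi(x)$ is itself single-valued. The main obstacle is the pseudoinverse step: one must carefully justify that the displacement $y_{\sigma}^*(x) - y^*(x)$ sits (up to $O(\sigma^2)$ error) in the range of $\nabla_{yy}^2 g(x, y^*(x))$, which is where the P{\L} geometry from Lemma \ref{lem:eigen} and the projection choice of $y^*(x)$ are essential.
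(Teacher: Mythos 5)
The paper does not prove this lemma: it is imported verbatim from \cite{chen2024finding} (their Lemma G.7), so there is no in-paper proof to compare against. Your reconstruction---passing to the limit $\sigma\to 0^+$ in \eqref{equ:lem:nablasigma}, Taylor-expanding the optimality condition $\sigma\nabla_y f(x,y^*_{\sigma}(x))+\nabla_y g(x,y^*_{\sigma}(x))=0$ around the projection $y^*(x)$, and inverting the Hessian on its range via the Moore--Penrose pseudoinverse---follows essentially the route of the cited source and is sound at the level of detail given. The one step that is asserted rather than proved is exactly the one you flag: that the tangent space of $Y^*(x)$ at $y^*(x)$ equals $\ker\bigl(\nabla^2_{yy}g(x,y^*(x))\bigr)$, so that the displacement $y^*_{\sigma}(x)-y^*(x)$ lies in the range of the Hessian and $\bigl(\nabla^2_{yy}g\bigr)^{\dagger}\nabla^2_{yy}g$ acts on it as the identity. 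Lemma \ref{lem:eigen} alone does not deliver this; one needs the local $C^1$-manifold (Morse--Bott type) structure of the minimizer set of a P{\L} function with Lipschitz Hessian, which is a nontrivial result in its own right and is precisely the machinery that the appendix of \cite{chen2024finding} develops before deriving the formula. Granting that fact, the remaining estimates---the $O(\sigma^2)$ remainder from the $\rho_g$-Lipschitz Hessian, the $1/\mu$ operator-norm bound on the pseudoinverse from Lemma \ref{lem:eigen}, and the replacement of $\nabla_y f(x,y^*_{\sigma}(x))$ by $\nabla_y f(x,y^*(x))$ using $\|y^*_{\sigma}(x)-y^*(x)\|\le (l_f/\mu)\sigma$---all go through, and the independence of the right-hand side from the choice of $y^*(x)\in Y^*(x)$ follows as you say from the single-valuedness of $\nabla\varphi(x)$ guaranteed by Lemma \ref{lem:nablalimit}.
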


Additionally, under Lemmas \ref{lem:eigen} and \ref{lem:nablavarphi}, \cite{chen2024finding} also establishes the Lipschitz continuity of $\nabla \varphi(x)$, a property essential for the convergence analysis.
\begin{lemma}[{\citep[Lemma 4.4]{chen2024finding}}]
\label{lem:gradLip}
Suppose that Assumption \ref{ass:basic} holds. Then, $\nabla\varphi(x)$ is $L_{\varphi}$-Lipschitz continuous, where
\[
L_{\varphi} := \left( L_f + \frac{l_f \rho_g}{\mu} \right) \left( 1 + \frac{L_g}{\mu} \right) \left(1 + \frac{L_g}{\mu} \right).
\]
\end{lemma}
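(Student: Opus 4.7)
The plan is to derive the Lipschitz bound directly from the explicit formula \eqref{equ:lem:nablavarphi}. Fix $x_1, x_2 \in \sR^{d_x}$ and select $y_1^* \in Y^*(x_1)$; by Lemma \ref{lem:Ylips} applied with $\sigma_1 = \sigma_2 = 0$, I may choose $y_2^* \in Y^*(x_2)$ with $\|y_1^* - y_2^*\| \le (L_g/\mu) \|x_1 - x_2\|$. Thus the combined perturbation of the pair $(x_i, y_i^*)$ is at most $(1 + L_g/\mu)\|x_1 - x_2\|$, which already accounts for one factor of $(1+L_g/\mu)$ in $L_\varphi$. Since $\nabla \varphi(x_i)$ is independent of which selection $y_i^* \in Y^*(x_i)$ is used (as guaranteed by Lemma \ref{lem:nablavarphi}), this choice is without loss of generality.

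Next I would decompose $\nabla\varphi(x_1) - \nabla\varphi(x_2)$ into two pieces matching the structure of \eqref{equ:lem:nablavarphi}: the $\nabla_x f$ difference, and the product $\nabla_{xy}^2 g\,(\nabla_{yy}^2 g)^\dagger \nabla_y f$ difference. The first piece is immediately controlled by the $L_f$-Lipschitzness of $\nabla f$. For the second, introduce the shorthand $H_i := \nabla_{yy}^2 g(x_i,y_i^*)$, $A_i := \nabla_{xy}^2 g(x_i,y_i^*)$, and $b_i := \nabla_y f(x_i,y_i^*)$, and apply the telescoping identity
\begin{equation*}
A_1 H_1^\dagger b_1 - A_2 H_2^\dagger b_2 = (A_1 - A_2) H_1^\dagger b_1 + A_2 (H_1^\dagger - H_2^\dagger) b_1 + A_2 H_2^\dagger (b_1 - b_2).
\end{equation*}
The outer two telescoping terms can be handled by the $\rho_g$-Lipschitzness of $\nabla^2 g$, the bounds $\|A_i\|\le L_g$ and $\|b_i\|\le l_f$ (the latter from the $l_f$-Lipschitzness of $f$), the $L_f$-Lipschitzness of $\nabla f$, together with the key operator bound $\|H_i^\dagger\|_{\mathrm{op}} \le 1/\mu$, which follows because Lemma \ref{lem:eigen} applied to the $\mu$-P{\L} function $g(x_i,\cdot)$ at its minimizer $y_i^*$ forces every nonzero eigenvalue of $H_i$ to be at least $\mu$.

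The main obstacle is the middle telescoping term $A_2(H_1^\dagger - H_2^\dagger)b_1$, since the Moore--Penrose pseudoinverse is generally discontinuous when the rank of its argument changes. The key idea is to exploit that $H_1$ and $H_2$ are Hessians \emph{at minimizers} of a P{\L} function, so by Lemma \ref{lem:eigen} their nonzero spectra are uniformly bounded below by $\mu$; combined with the standard perturbation identity
\begin{equation*}
H_1^\dagger - H_2^\dagger = -H_2^\dagger (H_1 - H_2) H_1^\dagger + (I - H_2^\dagger H_2)(H_1-H_2)^\top (H_1^\dagger)^\top H_1^\dagger + H_2^\dagger (H_2^\dagger)^\top (H_1-H_2)^\top (I - H_1 H_1^\dagger),
\end{equation*}
one can then show $\|(H_1^\dagger - H_2^\dagger) b_1\|$ is bounded by a constant multiple of $\|H_1-H_2\|\cdot\|b_1\|/\mu^2$, provided the null-space correction terms are controlled via the KKT structure at points in $Y^*(x_i)$.

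Assembling the three contributions, using $\|H_1 - H_2\| \le \rho_g(\|x_1-x_2\| + \|y_1^*-y_2^*\|) \le \rho_g(1+L_g/\mu)\|x_1-x_2\|$ and analogously for $A_1-A_2$ and $b_1-b_2$, and extracting the outer factor $(1+L_g/\mu)$ induced by the $y$-perturbation together with the internal factor $(1+L_g/\mu)$ coming from combining $\|A_i\|\le L_g$ with the $1/\mu$ scaling of $H_i^\dagger$, yields the advertised constant
\begin{equation*}
L_\varphi = \left(L_f + \frac{l_f \rho_g}{\mu}\right)\left(1 + \frac{L_g}{\mu}\right)\left(1 + \frac{L_g}{\mu}\right),
\end{equation*}
completing the proof.
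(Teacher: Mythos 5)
First, be aware that the paper does not prove this lemma: it is imported verbatim as \cite[Lemma 4.4]{chen2024finding} among the preliminaries, so there is no in-paper argument to compare against, and any self-contained derivation is necessarily a different route from the paper's (which is a citation).

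On the merits, your peripheral steps are sound --- selecting $y_2^*$ via Lemma \ref{lem:Ylips}, the bounds $\|A_i\|\le L_g$, $\|b_i\|\le l_f$, and $\|H_i^\dagger\|_{\mathrm{op}}\le 1/\mu$ via Lemma \ref{lem:eigen}, and the three-term telescoping --- but there is a genuine gap exactly where you flag the ``main obstacle.'' You assert that the null-space correction terms in the pseudoinverse perturbation identity are ``controlled via the KKT structure at points in $Y^*(x_i)$,'' but you never carry this out, and it is the entire content of the hard step. Concretely: (i) the terms involving $(I-H_2^\dagger H_2)$ and $(I-H_1H_1^\dagger)$ do not vanish in general; to dispose of them one needs either the range inclusion $\mathrm{Range}(\nabla_{yx}^2 g(x,y^*))\subseteq\mathrm{Range}(\nabla_{yy}^2 g(x,y^*))$ at minimizers of a P{\L} function, or the observation that Lemma \ref{lem:eigen} combined with Weyl's inequality forces $\mathrm{rank}(H_1)=\mathrm{rank}(H_2)$ whenever $\|H_1-H_2\|<\mu$, i.e.\ whenever $\|x_1-x_2\|<\mu/(\rho_g(1+L_g/\mu))$, after which an equal-rank Wedin bound applies locally and extends by chaining --- neither fact is stated or proved in your sketch. (ii) Even granting rank equality, the resulting bound is $\|H_1^\dagger-H_2^\dagger\|\le c\,\|H_1-H_2\|/\mu^2$ with $c=(1+\sqrt5)/2>1$ (or $c=3$ from naively norming your identity), so the middle term contributes $c\,l_f\rho_g L_g/\mu^2$ per unit perturbation rather than $l_f\rho_g L_g/\mu^2$; since expanding $L_\varphi=(L_f+l_f\rho_g/\mu)(1+L_g/\mu)^2$ requires that cross term with coefficient exactly $1$, your bookkeeping yields a strictly larger constant than the one advertised. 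The qualitative conclusion (Lipschitz continuity with some explicit constant) survives, but the stated $L_\varphi$ does not follow from the argument as written. A route that avoids the pseudoinverse altogether, and is more consistent with the machinery this paper assembles, is to establish a $\sigma$-uniform Lipschitz bound for $\nabla\varphi_\sigma$ from the first-order formula \eqref{equ:lem:nablasigma} together with Lemma \ref{lem:Ylips}, and then pass to the limit $\sigma\to 0^+$ using Lemma \ref{lem:nablalimit}.
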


To conclude this section and motivate the adaptive algorithms proposed in the next section, we now introduce the following assumption regarding the choice of the penalty parameter $\sigma$ in the penalty function $g_{\sigma} = \sigma f + g$.
\begin{assumption}
\label{ass:sigma}
The penalty parameter $\sigma$ in the penalty function $g_{\sigma} = \sigma f + g$ is chosen sufficiently small such that the condition $0 \le \sigma \le \min\{\rho_g /\rho_f, \bar{\sigma}\}$ in Lemma \ref{lem:nablalimit} is satisfied.
\end{assumption}
Particularly, in this paper, we set $\sigma = \epsilon$ for sufficiently small error tolerance $\epsilon > 0$ (cf. Line \ref{alg:AF2BA:sigma} in Algorithm \ref{alg:AF2BA}). Then, Assumption \ref{ass:sigma} is elementary to satisfy.

\section{Adaptive Algorithms for Nonconvex Bilevel Optimization}
\label{sec:Ada}
In this section, we introduce the Adaptive Fully First-order Bilevel Approximation (AF${}^2$BA) algorithm, the first method to incorporate a fully adaptive step size strategy for solving the nonconvex bilevel optimization (NBO) problems, unlike the well-tuned methods F${}^2$BA \citep{chen2025near,chen2024finding} and Prox-F${}^2$BA \citep{kwon2023penalty}. The pseudocode is provided in Algorithm \ref{alg:AF2BA}. Additionally, we present an accelerated variant, termed A${}^2$F${}^2$BA, which replaces the subproblem solvers with an accelerated subroutine.
\begin{algorithm}[htp!]
\caption{(\textbf{A}ccelerated) \textbf{A}daptive \textbf{F}ully \textbf{F}irst-order \textbf{B}ilevel \textbf{A}pproximation (\textbf{AF${}^2$BA}) (\textbf{A${}^2$F${}^2$BA})}
\label{alg:AF2BA}
\begin{algorithmic}[1]
\State Initial points $x_0, y_0, z_0$, initial step sizes $a_0 >0$, $b_0 >0$, and $c_0 >0$, scale parameter $\alpha > 1$, initial Lipschitz factors $L_{0,1} > 0$ and $L_{0,2} > 0$, error tolerance $\epsilon > 0$, and total iterations $T = 1/\epsilon^2$.
\State Set inner error tolerances $\epsilon_z = \epsilon_y = \epsilon^2$, and penalty parameter $\sigma = \epsilon$.\label{alg:AF2BA:sigma}
\For{$t=0,1,2,...,T-1$}
\State{Set $k = 0$ and $z_{t}^0 = z_{t-1}^{K_{t-1}}$ if $t > 0$ and $z_0$ otherwise.}
\State Invoke $(z^{K_t},K_t) = \hyperref[alg:adagrad]{\text{\textbf{AdaG-N}}}(g(x_t, \cdot), z_{t}^0, b_0, \epsilon_z)$.\label{alg:AF2BA:adagnlow}\Comment{Ada-Grad norm algorithm}
\State \textbf{Or} Invoke $(z_t^{K_t},K_t) = \hyperref[alg:AC-GM]{\text{\textbf{AC-GM}}}\left(g(x_t,\cdot),z^{0}_{t},\alpha,L_{0,1},\epsilon_z\right)$.\label{alg:AF2BA:acgmlow}\Comment{Auto-conditioned gradient method}
\State Set $n = 0$ and $y_{t}^0 = y_{t-1}^{N_{t-1}}$ if $t > 0$ and $y_0$ otherwise.
\State Invoke $(y^{N_t},N_t) = \hyperref[alg:adagrad]{\text{\textbf{AdaG-N}}}(\sigma f(x_t,\cdot) +  g(x_t,\cdot), y_0, c_0, \epsilon_y)$.\label{alg:AF2BA:adagnpen}
\State \textbf{Or} Invoke $(y_t^{N_t},N_t) = \hyperref[alg:AC-GM]{\text{\textbf{AC-GM}}}\left(\sigma f(x_t,\cdot) +  g(x_t,\cdot),y^{0}_{t},\alpha,L_{0,2},\epsilon_y\right)$.\label{alg:AF2BA:acgmpen}
\State{$\widehat{\nabla} \varphi(x_t,y_{t}^{N_t},z_{t}^{K_t}) = \nabla_x f(x_t,y_{t}^{N_t}) +  ( \nabla_x g(x_t,y_{t}^{N_t}) - \nabla_x g(x_t,z_{t}^{K_t}) ) / \sigma,$}\label{alg:AF2BA:widehat}
\State{$a_{t+1}^2 =  a_t^2 + \|\widehat{\nabla} \varphi(x_t,y_{t}^{N_t},z_{t}^{K_t})\|^2$,}\label{alg:AF2BA:at+1}
\State{$x_{t+1} = x_t - \frac{1}{ a_{t+1}} \widehat{\nabla} \varphi(x_t,y_{t}^{N_t},z_{t}^{K_t})$.}\label{alg:AF2BAxt+1}
\EndFor
\end{algorithmic} 
\end{algorithm}

\subsection{Approximate hypergradient}
To update the variable $x$, it is necessary to compute the hypergradient $\nabla \varphi(x)$, defined in \eqref{equ:lem:nablavarphi}. However, the exact solutions $y^*(x_t)$ and $y_{\sigma}^*(x_t)$ are not explicitly available, necessitating the use of approximate solutions $\hat{z}$ and $\hat{y}$ of the following subproblems, respectively.
\begin{equation}
\label{p:lower}
\min_{z\in \sR^{d_y}}~g(x,z),
\end{equation}
and
\begin{equation}
\label{p:penalty}
\min_{y\in \sR^{d_y}}~ \sigma f(x,y) + g(x,y).
\end{equation}
Given the approximate solutions $\hat{z}$ and $\hat{y}$ of Problems \eqref{p:lower} and \ref{p:penalty}, respectively, the approximate hypergradient is defined as
\begin{equation}
\label{equ:apprhyper}
\widehat{\nabla} \varphi(x,\hat{y},\hat{z}) = \nabla_x f(x,\hat{y}) + \frac{\nabla_x g(x,\hat{y}) - \nabla_x g(x,\hat{z})}{\sigma}.
\end{equation}
Indeed, from Lemma \ref{lem:nablalimit}, we have the following result that concerns the errors between $\widehat{\nabla} \varphi$ and $\nabla_{\sigma} \varphi$.
\begin{lemma}
\label{lem:hypererror}
Suppose that Assumptions \ref{ass:basic} and \ref{ass:sigma} hold. Then, for any $x\in\sR^{d_x}$, given the approximate solutions $\hat{z}$ and $\hat{y}$ of Problems \eqref{p:lower} and \ref{p:penalty}, it holds that
\begin{equation*}
\left\|\nabla\varphi(x) - \widehat{\nabla}\varphi(x,\hat{y},\hat{z})\right\| \le \bar{C}_{\sigma} \sigma + \left(L_f \dist(\hat{y},Y_{\sigma}^*(x)) + \frac{L_g}{\sigma}\dist(\hat{y},Y_{\sigma}^*(x)) + \frac{L_g}{\sigma}\dist(\hat{z},Y^*(x))\right),
\end{equation*}
where $\bar{C}_{\sigma}$ is defined in Lemma \ref{lem:nablalimit}.
\end{lemma}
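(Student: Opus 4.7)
The plan is to bound the error by the triangle inequality, splitting the deviation of $\widehat\nabla\varphi(x,\hat y,\hat z)$ from $\nabla\varphi(x)$ through the intermediate quantity $\nabla\varphi_\sigma(x)$. Specifically, I will write
\[
\|\nabla\varphi(x) - \widehat\nabla\varphi(x,\hat y,\hat z)\| \le \|\nabla\varphi(x) - \nabla\varphi_\sigma(x)\| + \|\nabla\varphi_\sigma(x) - \widehat\nabla\varphi(x,\hat y,\hat z)\|.
\]
The first term is immediately controlled by Lemma \ref{lem:nablalimit}, which under Assumption \ref{ass:sigma} gives $\|\nabla\varphi(x)-\nabla\varphi_\sigma(x)\|\le \bar C_\sigma \sigma$. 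This accounts for the $\bar C_\sigma\sigma$ term in the claimed bound.

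For the second term, I will use the explicit formulas for $\nabla\varphi_\sigma(x)$ in \eqref{equ:lem:nablasigma} and for $\widehat\nabla\varphi(x,\hat y,\hat z)$ in \eqref{equ:apprhyper}. Since \eqref{equ:lem:nablasigma} is valid for \emph{any} choice of $y_\sigma^*(x)\in Y_\sigma^*(x)$ and $y^*(x)\in Y^*(x)$, I will choose projections onto these (closed) solution sets, so that $\|y_\sigma^*(x)-\hat y\|=\dist(\hat y,Y_\sigma^*(x))$ and $\|y^*(x)-\hat z\|=\dist(\hat z,Y^*(x))$. After taking the difference, grouping the $\nabla_x f$ terms together and the two $\nabla_x g$ terms separately, and applying the triangle inequality, I will apply the Lipschitz continuity of $\nabla_x f$ (with constant $L_f$) and $\nabla_x g$ (with constant $L_g$) from Assumption \ref{ass:basic} to obtain
\[
\|\nabla\varphi_\sigma(x) - \widehat\nabla\varphi(x,\hat y,\hat z)\| \le L_f \dist(\hat y,Y_\sigma^*(x)) + \frac{L_g}{\sigma}\dist(\hat y,Y_\sigma^*(x)) + \frac{L_g}{\sigma}\dist(\hat z,Y^*(x)).
\]
Combining the two bounds yields exactly the claimed estimate.

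There is no essential obstacle here: the proof is a direct manipulation of existing identities, and the only minor point requiring care is to exploit the freedom in the choice of $y_\sigma^*(x)$ and $y^*(x)$ in the representation \eqref{equ:lem:nablasigma} so that Euclidean distances are replaced by Hausdorff set-distances. The remaining estimates are routine applications of the triangle inequality and the gradient-Lipschitz assumptions.
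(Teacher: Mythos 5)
Your proposal is correct and follows essentially the same route as the paper: it decomposes the error through $\nabla\varphi_\sigma(x)$ via the triangle inequality, bounds $\|\nabla\varphi(x)-\nabla\varphi_\sigma(x)\|$ by $\bar C_\sigma\sigma$ using Lemma \ref{lem:nablalimit}, and controls $\|\nabla\varphi_\sigma(x)-\widehat\nabla\varphi(x,\hat y,\hat z)\|$ by comparing \eqref{equ:lem:nablasigma} with \eqref{equ:apprhyper} term by term using the Lipschitz continuity of $\nabla_x f$ and $\nabla_x g$, exactly as the paper does. Your observation that the freedom in choosing $y_\sigma^*(x)\in Y_\sigma^*(x)$ and $y^*(x)\in Y^*(x)$ lets one replace point distances by set distances is precisely the step the paper relies on implicitly.
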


\subsection{Resolutions for subproblems}
In this section, we introduce two adaptive subroutines for solving nonconvex problems satisfying the Polyak-{\L}ojasiewicz (P{\L}) condition: the AdaGrad-Norm (AdaG-N) algorithm \citep{xie2020linear,ward2020adagrad} and a smooth variant of the auto-conditioned proximal gradient method (AC-PGM) \citep{yagishita2025simple}, which we denote as AC-GM. We also establish the upper bounds of the first-order oracles for applying these subroutines to solve Problems \eqref{p:lower} and \eqref{p:penalty}.

\subsubsection{Ada-Grad norm algorithm}
This section first introduces the Adaptive gradient norm (AdaG-N) algorithm \citep{ward2020adagrad,xie2020linear}, described in Algorithm \ref{alg:adagrad}, and establishes upper bounds on the total number of iterations required to solve Problems \eqref{p:lower} and \eqref{p:penalty} using AdaG-N. Specifically, we denote Algorithm \ref{alg:AF2BA} as AF${}^2$BA when employing Algorithm \ref{alg:adagrad} as the subroutine.

\begin{algorithm}[htp!]
\caption{\textbf{Ada}ptive \textbf{G}radient-\textbf{N}orm algorithm: $(x^k,k)=\text{\textbf{AdaG-N}}(h,x_0, \alpha_0,\epsilon_h)$} 
\label{alg:adagrad}
\begin{algorithmic}[1]
\State Initial point $x_0$, initial step size $\alpha_0 > 0$, error tolerance $\epsilon_h$.
\State $x = x_{0}$, $k =0$.
\While{$\|\nabla h(x_k)\| > \epsilon_h$}
\State $\alpha_{k+1}^2 = \alpha_{k}^2 + \|\nabla h(x_k)\|^2$
\State $x^{k+1} = x^{k} - \frac{1}{\alpha_{k+1}}\nabla h(x^{k})$. 
\State $k = k + 1$.
\EndWhile
\end{algorithmic}
\end{algorithm}
The convergence result of Algorithm \ref{alg:adagrad} has been established in \cite{xie2020linear}. When we apply this algorithm for solving subproblems \eqref{p:lower} and \eqref{p:penalty} (cf. Line \ref{alg:AF2BA:adagnpen}), the specific convergence result is proposed in Proposition \ref{prop:KtNt}.
\begin{proposition}
\label{prop:KtNt}
Suppose that Assumptions \ref{ass:basic} and \ref{ass:sigma} hold. Then, for any $0\le t \le T$, the numbers of iterations $K_t$ and $N_t$ required in Algorithm \ref{alg:AF2BA} satisfy:
\begin{equation*}
K_t \le \frac{\log(C_b^2/ b_0^2)}{\log(1+\epsilon_z^2/C_b^2)} + \frac{b_{\max}}{\mu}\log\left(\frac{L_g^2 (b_{\max}-C_b)}{\mu \epsilon_z^2}\right).
\end{equation*}
and
\begin{equation*}
N_t \le \frac{\log(C_c^2/ c_0^2)}{\log(1 + \epsilon_y^2/C_c^2)} + \frac{c_{\max}}{\mu}\log\left(\frac{(L_f + L_g)^2 (c_{\max}-C_c)}{\mu \epsilon_y^2}\right),
\end{equation*}
where $C_b, C_c$, $b_{\max}$, and $c_{\max}$ are constants defined in Appendix \ref{app:section3}.
\end{proposition}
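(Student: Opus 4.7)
The plan is to analyze AdaG-N (Algorithm \ref{alg:adagrad}) when it is invoked on each of the two subproblems \eqref{p:lower} and \eqref{p:penalty}. By Assumption \ref{ass:basic}, the lower-level objective $g(x_t,\cdot)$ is $L_g$-smooth and $\mu$-P{\L}, while the penalty $\sigma f(x_t,\cdot)+g(x_t,\cdot)$ is $(\sigma L_f+L_g)$-smooth (bounded by $L_f+L_g$ since $\sigma\le 1$) and also $\mu$-P{\L}. The two arguments are symmetric, so I would focus on the bound for $K_t$ and recover the bound for $N_t$ by substituting $L_g\mapsto L_f+L_g$ and $\epsilon_z\mapsto\epsilon_y$.

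The core of the proof is a two-phase analysis splitting the iterations at a threshold $C_b$, chosen on the order of $L_g$ plus a warm-start-dependent term. In Phase 1 ($\alpha_k\le C_b$), the stopping criterion $\|\nabla g(z^k)\|>\epsilon_z$ and the update $\alpha_{k+1}^2=\alpha_k^2+\|\nabla g(z^k)\|^2$ give $\alpha_{k+1}^2\ge\alpha_k^2(1+\epsilon_z^2/C_b^2)$; taking logarithms and telescoping yields the first term $\log(C_b^2/b_0^2)/\log(1+\epsilon_z^2/C_b^2)$. In Phase 2 ($C_b<\alpha_k\le b_{\max}$), the threshold is chosen so that the descent lemma with step size $1/\alpha_{k+1}\le 1/L_g$ guarantees $g(z^{k+1})-g^*\le g(z^k)-g^*-\tfrac{1}{2\alpha_{k+1}}\|\nabla g(z^k)\|^2$; combining with the $\mu$-P{\L} condition gives the linear contraction $g(z^{k+1})-g^*\le (1-\mu/b_{\max})(g(z^k)-g^*)$. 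Iterating this contraction until $g(z^k)-g^*\le\epsilon_z^2/(2\mu)$ (which implies $\|\nabla g(z^k)\|\le\epsilon_z$ by Lemma \ref{lem:PL}) produces the logarithmic factor of length $(b_{\max}/\mu)\log(\cdot)$, and a telescoping $\alpha_{k+1}^2-\alpha_k^2=\|\nabla g(z^k)\|^2$ across Phase 2 ties $b_{\max}^2-C_b^2$ to the initial Phase-2 function-value gap (at most $\tfrac{L_g}{2}\dist^2(\cdot,Y^*(x_t))$ by smoothness), accounting for the $L_g^2(b_{\max}-C_b)$ factor inside the logarithm in the second term.

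The main obstacle is to ensure that $b_{\max}$ and $C_b$ are absolute constants independent of the outer index $t$, despite the warm-start $z_t^0=z_{t-1}^{K_{t-1}}$. Naively, the initial distance $\dist(z_t^0,Y^*(x_t))$ could grow with $t$. To control it, I would combine the termination guarantee $\|\nabla g(z_{t-1}^{K_{t-1}})\|\le\epsilon_z$ (which, via Lemma \ref{lem:PL}, bounds $\dist(z_t^0,Y^*(x_{t-1}))$) with the Lipschitz continuity of $Y^*(\cdot)$ from Lemma \ref{lem:Ylips} and the bound $\|x_t-x_{t-1}\|\le\|\widehat{\nabla}\varphi\|/a_t$ from Line \ref{alg:AF2BAxt+1} of Algorithm \ref{alg:AF2BA}; together with the bound on $\|\widehat{\nabla}\varphi\|$ inherent in its definition (Line \ref{alg:AF2BA:widehat}), this gives a uniform upper bound on $\dist(z_t^0,Y^*(x_t))$, hence on the Phase-2 initial function-value gap, and hence on $b_{\max}$ via the fixed-point inequality produced by the telescoping sum of squared gradients. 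The identical reasoning, with $L_g$ replaced by $L_f+L_g$, yields the bound on $N_t$.
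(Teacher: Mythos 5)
Your overall two-phase decomposition (step size below/above the threshold $C_b$), the geometric-growth argument for Phase 1, the descent-plus-P{\L} linear contraction for Phase 2, and the warm-start control via the previous stopping criterion, Lemma \ref{lem:Ylips}, and the boundedness of $\widehat{\nabla}\varphi$ all match the paper's proof. There is, however, one genuine gap: you pass from ``a uniform bound on $\dist(z_t^0,Y^*(x_t))$'' to ``a uniform bound on the Phase-2 initial function-value gap $g(x_t,z_t^{k_1})-g^*(x_t)$'' as if Phase 1 were monotone, but it is not. During Phase 1 the adaptive step size $1/b_{k+1}$ can exceed $2/L_g$, so the descent lemma does not apply and the function value can increase over the $k_1$ Phase-1 iterations; the Phase-2 starting point $z_t^{k_1}$ may therefore be much worse than $z_t^0$. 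The paper closes this with the AdaGrad-Norm summation lemma (Lemma \ref{lem:sum}), showing $g(x_t,z_t^{k_1}) \le g(x_t,z_t^{0}) + \frac{L_g}{2}\bigl(1+\log(C_b^2/b_0^2)\bigr)$ (inequality \eqref{equ:Lipg}); this logarithmic overshoot term is precisely what appears inside the definition of $b_{\max}$ in \eqref{equ:bK2}. Without it your fixed-point bound on $b_{\max}$ does not close.

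A secondary, quantitative slip: you claim $g(z^k)-g^*\le\epsilon_z^2/(2\mu)$ implies $\|\nabla g(z^k)\|\le\epsilon_z$ ``by Lemma \ref{lem:PL}.'' The P{\L} inequality $2\mu(g-g^*)\le\|\nabla g\|^2$ goes the wrong way for this; the correct conversion chains $\|\nabla g\|\le L_g\,\dist(z,Y^*)$ with quadratic growth $\tfrac{\mu}{2}\dist^2\le g-g^*$, so the required threshold is $g-g^*\le\mu\epsilon_z^2/(2L_g^2)$. This only changes the constant inside the logarithm (and is exactly the source of the $L_g^2/\mu$ factor in the stated bound), but as written the implication is false whenever $L_g>\mu$.
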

\begin{remark}
Since $1/\log(1+\epsilon)$ is of the same order as $1/\epsilon$, we have $K_t = \gO(1/\epsilon_z^2)$ and $N_t = \gO(1/\epsilon_y^2)$, which matches the complexity of AdaGrad-Norm for solving nonconvex problems \cite{xie2020linear} satisfying the P{\L} conditions. Moreover, similar to AdaGrad-Norm \cite{xie2020linear}, the step size adaptation proceeds in two stages, e.g., for solving Problem \eqref{p:lower}, Stage 1 requires at most $\gO(1/\epsilon_z^2)$ iterations, while Stage 2 requires at most $\gO(\log(1/\epsilon_z^2))$ iterations.
\end{remark}

\subsubsection{Auto-conditioned gradient method}
The auto-conditioned proximal gradient method (AC-PGM) introduced in \cite{yagishita2025simple} could solve the nonsmooth, nonconvex problems. In this section, we introduce a smooth version of AC-PGM, termed AC-GM, and establish upper bounds for the total number of iterations required to solve Problems \eqref{p:lower} and \eqref{p:penalty} within AC-GM. The pseudocode is proposed in Algorithm \ref{alg:adagrad}. Specifically, we denote Algorithm \ref{alg:AF2BA} as A${}^2$F${}^2$BA when employing Algorithm \ref{alg:adagrad} as the subroutine.

\begin{algorithm}[ht]
\caption{\textbf{A}uto-\textbf{C}onditioned \textbf{G}radient \textbf{M}ethod: $(x^k,k)=\text{\textbf{AC-GM}}(h,x_0,\alpha,L_0,\epsilon_h)$} 
\label{alg:AC-GM}
\begin{algorithmic}[1]
\State Initial point $x_0$, scale parameter $\alpha > 1$, initial Lipschitz factor $L_0 > 0$, error tolerance $\epsilon_h$.
\State $x = x_{0}$, $k =0 $.
\While{$\|\nabla h(x_k)\| > \epsilon_h$}
\State $\gamma_{k + 1} = \max\{L_0, \dots, L_{k}\}$.
\State $x^{k+1} = x^{k} - \frac{1}{\alpha\gamma_{k+1}}\nabla h(x^{k})$. 
\State $L_{k+1} = \frac{2(h(x^{k+1}) - h(x^{k}) - \langle \nabla h(x^{k}), x^{k+1} - x^{k} \rangle)}{\|x^{k+1} - x^{k}\|^2}$.\label{alg:AC-GMLk}
\State $k = k + 1$.
\EndWhile
\end{algorithmic}
\end{algorithm}

Although the linear convergence of Algorithm \ref{alg:AC-GM} under Assumption \ref{ass:basic} has been established in \cite[Theorem 2.2]{yagishita2025simple}, the explicit convergence factor is not provided. Since this factor is essential for the complexity analysis of this paper, we conduct an explicit convergence rate of Algorithm \ref{alg:AC-GM} in the following lemma.
\begin{lemma}
\label{lem:linear:ACGM}
Given a $\mu_h$-P{\L} function $h :\sR^d \rightarrow \sR$ with $L_h$-Lipschitz gradients, the sequence $\{x^k\}$ generated by Algorithm \ref{alg:AC-GM} satisfies
\begin{equation*}
h(x^{k+1}) - h^* \le C_h\left(1-p\right)^{k+1}\left(h(x^{0}) - h^*\right),
\end{equation*}
where $p := \frac{\mu_h(\alpha -1)}{2\alpha^2 \max\{L_0,L_h\}}$ satisfies $0 < p < 1$ and $C_h$ is a constant defined in \eqref{equ:kinbarS4}.

Furthermore, the optimal convergence factor is achieved at $\alpha = 2$, i.e.,
\begin{equation*}
h(x^{k+1}) - h^* \le C_h \left(1 - \frac{\mu_h}{8 \max\{L_0,L_h\}}\right)^{k+1}\left(h(x^{0}) - h^*\right).
\end{equation*}
\end{lemma}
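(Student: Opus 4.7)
The plan is to combine an exact descent identity, which is available because $L_{k+1}$ is defined \emph{a posteriori} from the update itself, with a two-regime argument that separates a finite warm-up phase from iterations in which uniform linear contraction holds. The guiding invariant is that the adaptive scaling $\gamma_{k+1}$ is monotone non-decreasing and uniformly bounded above, which is what lets us contain the warm-up. First I would verify that $L_{k+1}\le L_h$ for every $k\ge 0$: substituting the update $x^{k+1}-x^k=-\tfrac{1}{\alpha\gamma_{k+1}}\nabla h(x^k)$ into the standard descent inequality for the $L_h$-smooth function $h$ and comparing with Line~\ref{alg:AC-GMLk} yields this bound directly. Consequently $\gamma_{k+1}=\max\{L_0,L_1,\ldots,L_k\}$ is non-decreasing and uniformly bounded by $M:=\max\{L_0,L_h\}$. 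Plugging the update rule into the defining identity for $L_{k+1}$ further yields the exact descent identity
\[
h(x^{k+1})-h(x^k)=-\tfrac{1}{\alpha\gamma_{k+1}}\Bigl(1-\tfrac{L_{k+1}}{2\alpha\gamma_{k+1}}\Bigr)\|\nabla h(x^k)\|^2,
\]
which will serve as the workhorse inequality.

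Next I declare iteration $k$ \emph{good} if $L_{k+1}\le \tfrac{3\alpha+1}{2}\gamma_{k+1}$ and \emph{bad} otherwise. For a good iteration, $1-\tfrac{L_{k+1}}{2\alpha\gamma_{k+1}}\ge \tfrac{\alpha-1}{4\alpha}$, so the descent identity combined with the P{\L} inequality $\|\nabla h(x^k)\|^2\ge 2\mu_h(h(x^k)-h^*)$ and $\gamma_{k+1}\le M$ yields the one-step contraction $h(x^{k+1})-h^* \le (1-p)\,(h(x^k)-h^*)$ with exactly $p=\tfrac{\mu_h(\alpha-1)}{2\alpha^2 M}$. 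For a bad iteration, by definition $\gamma_{k+2}=L_{k+1}>\tfrac{3\alpha+1}{2}\gamma_{k+1}$, so $\gamma$ grows by a factor strictly greater than one; meanwhile, applying $L_{k+1}\le L_h$, $\gamma_{k+1}\ge L_0$, and the smoothness-to-gap inequality $\|\nabla h(x^k)\|^2\le 2L_h(h(x^k)-h^*)$ to the descent identity shows that the function gap can amplify by at most the constant factor $C:=1+\tfrac{L_h^2}{\alpha^2 L_0^2}$. Since $\gamma_{k+1}\in[L_0,M]$ is monotone and each bad iteration multiplies $\gamma$ by at least $\tfrac{3\alpha+1}{2}>1$, the number of bad iterations throughout the entire run is at most $B^\star:=\bigl\lceil\log_{(3\alpha+1)/2}(M/L_0)\bigr\rceil$, a finite problem-dependent constant. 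Multiplying the two per-iteration bounds across any $k+1$ iterations then gives
\[
h(x^{k+1})-h^*\le C^{B^\star}(1-p)^{-B^\star}\,(1-p)^{k+1}\,(h(x^0)-h^*),
\]
so the claim follows with $C_h:=C^{B^\star}(1-p)^{-B^\star}$ as in \eqref{equ:kinbarS4}. The bound $0<p<1$ is immediate from $\alpha>1$ combined with the standard inequality $\mu_h\le L_h\le M$ for smooth P{\L} functions.

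For the optimal step-size parameter I would simply differentiate $p(\alpha)=\tfrac{\mu_h(\alpha-1)}{2\alpha^2 M}$ in $\alpha$; the derivative is proportional to $2-\alpha$, so the unique maximizer on $(1,\infty)$ is $\alpha=2$, at which $p(2)=\mu_h/(8\max\{L_0,L_h\})$. The principal technical obstacle is the warm-up analysis: the good/bad threshold $\tfrac{3\alpha+1}{2}\gamma_{k+1}$ must be tuned \emph{precisely} so that the resulting good-iteration contraction factor matches the stated $p$, and both the number of bad iterations and the per-iteration amplification must be bounded simultaneously so that they collapse into a single finite constant $C_h$. A minor secondary subtlety is that $L_{k+1}$ may be negative, which only strengthens the descent identity and is therefore seamlessly absorbed into the good-iteration bound without separate treatment.
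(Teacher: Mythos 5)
Your proof is correct and follows essentially the same strategy as the paper's: partition the iterations into a contracting set and a finite set of non-contracting ones (finite because the adaptive estimate $\gamma_{k+1}$ jumps by a fixed factor greater than one on each such step yet stays below $\max\{L_0,L_h\}$), bound the one-step amplification on the latter, and multiply the per-step factors to absorb everything into a constant $C_h$ times $(1-p)^{k+1}$. Your threshold $\tfrac{3\alpha+1}{2}\gamma_{k+1}$ and amplification constant $1+L_h^2/(\alpha^2 L_0^2)$ differ from the paper's $\tfrac{\alpha+1}{2}\gamma_{k+1}$ and $1+\bar{C}$, so your $C_h$ is a different (and somewhat cleaner) constant, but the contraction factor $1-p$ and the optimality of $\alpha=2$ match the paper exactly.
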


When we apply this algorithm for solving subproblems \eqref{p:lower} and \eqref{p:penalty}, the specific convergence result is proposed in Proposition \ref{prop:KtNt2}.
\begin{proposition}
\label{prop:KtNt2}
Suppose that Assumptions \ref{ass:basic} and \ref{ass:sigma} hold. Then, for any $0\le t \le T$, the numbers of iterations $K_t$ and $N_t$ required in Algorithm \ref{alg:AF2BA} satisfy:
\begin{equation*}
K_t \le \frac{\log(\bar{C}_b / \epsilon_z^2)}{\log(1/(1 - p_g))},
\end{equation*}
and
\begin{equation*}
N_t \le \frac{\log(\bar{C}_c / \epsilon_z^2)}{\log(1/(1 - p_{\sigma}))},
\end{equation*}
where $p_g := \frac{\mu(\alpha -1)}{2\alpha^2 \max\{L_{0,1},L_g\}}$, $p_{\sigma} := \frac{\mu(\alpha -1)}{2\alpha^2 \max\{L_{0,2},L_f + L_g\}}$ satisfy $0 < p_g, p_{\sigma} < 1$, $\bar{C}_b$ and $\bar{C}_c$ are constants defined in \eqref{equ:barCb} and \eqref{equ:barCc}, respectively.
\end{proposition}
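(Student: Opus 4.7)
The plan is to apply Lemma \ref{lem:linear:ACGM} to each of the two subproblems \eqref{p:lower} and \eqref{p:penalty}, convert the resulting function-value contraction into the gradient-norm stopping criterion used by the while-loop of Algorithm \ref{alg:AC-GM}, and read off the iteration counts by taking logarithms.

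First I would verify the hypotheses of Lemma \ref{lem:linear:ACGM}. For \eqref{p:lower} the objective $g(x_t,\cdot)$ is $\mu$-P{\L} (take $\sigma = 0$ in Assumption \ref{ass:basic}(1)) with $L_g$-Lipschitz gradients by Assumption \ref{ass:basic}(3). For \eqref{p:penalty} the objective $g_{\sigma}(x_t,\cdot)$ is $\mu$-P{\L} by Assumption \ref{ass:basic}(1), and its gradient is $(\sigma L_f + L_g) \le (L_f + L_g)$-Lipschitz once $\sigma \le 1$, which is guaranteed by Assumption \ref{ass:sigma} together with the choice $\sigma = \epsilon$ in Line \ref{alg:AF2BA:sigma}. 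The contraction factors $p_g$ and $p_{\sigma}$ appearing in the statement are exactly those delivered by Lemma \ref{lem:linear:ACGM} for these two instances.

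Next, Lemma \ref{lem:linear:ACGM} gives
\[
g(x_t, z_t^{k}) - g^*(x_t) \le C_g (1 - p_g)^{k}\bigl(g(x_t, z_t^0) - g^*(x_t)\bigr),
\]
and an analogous inequality for $g_{\sigma}$ with rate $(1 - p_{\sigma})$. The stopping test in Algorithm \ref{alg:AC-GM} is on $\|\nabla_y g(x_t, z_t^k)\|$, so I would apply the standard $L_g$-smoothness consequence $\|\nabla_y g(x_t, z_t^k)\|^2 \le 2 L_g (g(x_t, z_t^k) - g^*(x_t))$, and similarly for $g_{\sigma}$. Combining these inequalities gives an upper bound on the squared gradient norm that decays geometrically, and the while-loop terminates as soon as this upper bound drops below $\epsilon_z^2$. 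Taking logarithms yields the claimed form $K_t \le \log(\bar{C}_b/\epsilon_z^2)/\log(1/(1 - p_g))$, where $\bar{C}_b$ collects $2 L_g C_g$ together with a uniform upper bound on $g(x_t, z_t^0) - g^*(x_t)$. The $N_t$ bound is derived identically using the smoothness constant of $g_{\sigma}$.

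The main technical obstacle is showing that the warm-started initial gap $g(x_t, z_t^0) - g^*(x_t)$ is bounded by a constant independent of $t$, so that $\bar{C}_b$ and $\bar{C}_c$ in \eqref{equ:barCb} and \eqref{equ:barCc} are well-defined finite constants. I would chain three observations: (i) since the previous inner loop exited with $\|\nabla_y g(x_{t-1}, z_{t-1}^{K_{t-1}})\| \le \epsilon_z$, Lemma \ref{lem:PL} gives $\mathrm{dist}(z_{t-1}^{K_{t-1}}, Y^*(x_{t-1})) \le \epsilon_z/\mu$; (ii) Lemma \ref{lem:Ylips} controls $\mathrm{dist}(Y^*(x_{t-1}), Y^*(x_t))$ by $(L_g/\mu)\|x_t - x_{t-1}\|$; and (iii) the AdaGrad-style update in Line \ref{alg:AF2BAxt+1} gives $\|x_t - x_{t-1}\| = \|\widehat{\nabla}\varphi\|/a_t \le 1$ by the definition $a_t^2 \ge \|\widehat{\nabla}\varphi(x_{t-1}, y_{t-1}^{N_{t-1}}, z_{t-1}^{K_{t-1}})\|^2$ from Line \ref{alg:AF2BA:at+1}. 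Combining the three estimates via the triangle inequality bounds $\mathrm{dist}(z_t^0, Y^*(x_t))$ uniformly in $t$, and $L_g$-smoothness then converts this distance bound into the desired uniform bound on the initial function-value gap. The same argument with Lipschitz constant $L_f + L_g$ produces $\bar{C}_c$, which completes the proof.
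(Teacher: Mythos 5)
Your proposal is correct and takes essentially the same route as the paper: apply Lemma \ref{lem:linear:ACGM} to each subproblem, bound the warm-started initial gap uniformly in $t$ by chaining the previous inner stopping criterion, Lemma \ref{lem:Ylips}, and a bound on $\|x_t - x_{t-1}\|$, then convert the geometric function-value decay into the gradient-norm stopping test and take logarithms. The only (harmless) deviations are in the constants: the paper bounds the step by $C_{\varphi}/a_0$ via Lemma \ref{lem:boundhyperg} rather than by $1$ from $a_t \ge \|\widehat{\nabla}\varphi\|$, and converts the function gap to a gradient bound through Lemma \ref{lem:PL} (factor $2L_g^2/\mu$) rather than through smoothness (factor $2L_g$), which is how the specific $\bar{C}_b$ and $\bar{C}_c$ in \eqref{equ:barCb} and \eqref{equ:barCc} arise.
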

\begin{remark}
Proposition \ref{prop:KtNt2} establishes upper bounds on the total number of iterations required to solve Problems \eqref{p:lower} and \eqref{p:penalty}. Unlike the sublinear complexity bounds derived in Proposition \ref{prop:KtNt}, Proposition \ref{prop:KtNt2} provides linear bounds, demonstrating that the subproblems \eqref{p:lower} and \eqref{p:penalty} can be solved in $\mathcal{O}(|\log\epsilon|)$ iterations. Consequently, the complexity results match those of the well-tuned methods \citep{attouch2009convergence, frankel2015splitting, karimi2016linear, bento2025convergence} for solving nonconvex problems satisfying the P{\L} conditions.
\end{remark}

\subsection{Convergence analysis of Algorithm \ref{alg:AF2BA}}
In this section, we explore the convergence result of Algorithm \ref{alg:AF2BA}. Particularly, as detailed in Lines \ref{alg:AF2BA:widehat}-\ref{alg:AF2BAxt+1} of Algorithm \ref{alg:AF2BA}, the update mode of the variable $x$ follows a structure analogous to Algorithm \ref{alg:adagrad}, i.e., AdaGrad-Norm algorithm \citep{xie2020linear,ward2020adagrad}. A key distinction, however, is the presence of a bias between the used approximate hypergradient $\widehat{\nabla} \varphi$ and the exact hypergradient $\nabla \varphi$. Nevertheless, benefit from Lemma \ref{lem:hypererror}, the bias between $\widehat{\nabla} \varphi$ and $\nabla \varphi$ can be bounded, then we can derive the convergence guarantees for Algorithm \ref{alg:AF2BA}, extending the analytical framework from prior studies \citep{ward2020adagrad}.
\begin{theorem}
\label{thm:conv}
Suppose that Assumptions \ref{ass:basic}, \ref{ass:inf}, and \ref{ass:sigma} hold. Given an error tolerance $\epsilon > 0$, after at most $T = 1/\epsilon^2$ iterations, the sequence $\{x_t\}$ generated by Algorithm \ref{alg:AF2BA} satisfies
\begin{equation*}
\min_{t\in[0,T-1]} \|\nabla \varphi(x_t)\| \le \gO(\epsilon).
\end{equation*}
Furthermore, the first-order oracles required by Algorithm \ref{alg:AF2BA} are $\gO(1/\epsilon^6)$ and $\tilde{\gO}(1/\epsilon^2)$ for \textbf{AF${}^2$BA} and \textbf{A${}^2$F${}^2$BA}, respectively.
\end{theorem}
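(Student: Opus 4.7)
The plan is to adapt the AdaGrad-Norm analysis for smooth nonconvex optimization \citep{ward2020adagrad,xie2020linear} to the present setting, where only a biased outer gradient is available. Writing $g_t := \widehat{\nabla}\varphi(x_t,y_t^{N_t},z_t^{K_t})$ and $\delta_t := \nabla\varphi(x_t) - g_t$, the outer update $x_{t+1} = x_t - g_t/a_{t+1}$ with $a_{t+1}^2 = a_t^2 + \|g_t\|^2$ is exactly an AdaGrad-Norm step perturbed by $\delta_t$. The first task is to quantify $\|\delta_t\|$: the termination criteria of the inner solvers yield $\|\nabla g(x_t,z_t^{K_t})\| \le \epsilon_z$ and $\|\nabla(\sigma f+g)(x_t,y_t^{N_t})\| \le \epsilon_y$, and Assumption \ref{ass:basic} combined with Lemma \ref{lem:PL} converts these into $\dist(z_t^{K_t},Y^*(x_t)) \le \epsilon_z/\mu$ and $\dist(y_t^{N_t},Y_\sigma^*(x_t)) \le \epsilon_y/\mu$. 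Feeding these into Lemma \ref{lem:hypererror} with the algorithmic choices $\sigma=\epsilon$ and $\epsilon_z=\epsilon_y=\epsilon^2$ yields $\|\delta_t\| = \gO(\epsilon)$ uniformly in $t$.

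Next, I would invoke the $L_\varphi$-smooth descent lemma (Lemma \ref{lem:gradLip}) on $\varphi$, expand $\langle\nabla\varphi(x_t),g_t\rangle = \|\nabla\varphi(x_t)\|^2 - \langle\nabla\varphi(x_t),\delta_t\rangle$, and apply Young's inequality to extract
\[
\varphi(x_{t+1}) \le \varphi(x_t) - \frac{\|\nabla\varphi(x_t)\|^2}{2a_{t+1}} + \frac{\|\delta_t\|^2}{2a_{t+1}} + \frac{L_\varphi\|g_t\|^2}{2a_{t+1}^2}.
\]
Telescoping from $t=0$ to $T-1$ and using the classical AdaGrad-Norm identity $\sum_t \|g_t\|^2/a_{t+1}^2 \le 2\log(a_T/a_0)$, together with $\|\delta_t\|^2 = \gO(\epsilon^2)$, $\sum_t 1/a_{t+1} \le T/a_0$, and the calibration $T\epsilon^2 = 1$, would produce
\[
\sum_{t=0}^{T-1} \frac{\|\nabla\varphi(x_t)\|^2}{a_{t+1}} \le 2(\varphi(x_0) - \varphi^*) + \gO(1) + L_\varphi\log(a_T^2/a_0^2).
\]

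The key step that closes the argument is a self-bounding inequality for $a_T$. From $\|g_t\|^2 \le 2\|\nabla\varphi(x_t)\|^2 + 2\|\delta_t\|^2$ and $\sum_t\|\delta_t\|^2 = \gO(T\epsilon^2) = \gO(1)$, one obtains $a_T^2 \le a_0^2 + 2\sum_t\|\nabla\varphi(x_t)\|^2 + \gO(1)$, while the previous display yields $\sum_t\|\nabla\varphi(x_t)\|^2 \le a_T \cdot (\gO(1) + L_\varphi\log(a_T/a_0))$. Combining the two gives a quadratic-in-$a_T$ inequality whose solution is $a_T = \tilde{\gO}(1)$, and consequently $\sum_t\|\nabla\varphi(x_t)\|^2 = \tilde{\gO}(1)$, so $\min_{t\in[0,T-1]}\|\nabla\varphi(x_t)\|^2 \le \tilde{\gO}(1)/T = \tilde{\gO}(\epsilon^2)$, giving $\min_t\|\nabla\varphi(x_t)\| = \gO(\epsilon)$. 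The oracle counts then follow by multiplying $T=1/\epsilon^2$ by the per-iteration cost $K_t+N_t$: Proposition \ref{prop:KtNt} gives $\gO(1/\epsilon^4)$ for AF${}^2$BA (hence $\gO(1/\epsilon^6)$ total), and Proposition \ref{prop:KtNt2} gives $\gO(|\log\epsilon|)$ for A${}^2$F${}^2$BA (hence $\tilde{\gO}(1/\epsilon^2)$ total). The main obstacle I expect is precisely this self-bounding step: because $a_{t+1}$ depends on the perturbed $g_t$ rather than on $\nabla\varphi(x_t)$, the telescoped bound on $\sum\|\nabla\varphi(x_t)\|^2/a_{t+1}$ and the growth of $a_T$ are coupled through $\delta_t$, and closing the loop requires the precise calibration $T\epsilon^2 = \gO(1)$ to absorb the perturbation without degrading the rate; the warm-starts of the inner solvers cause no additional trouble because Propositions \ref{prop:KtNt}--\ref{prop:KtNt2} give iteration bounds uniform in the starting iterate.
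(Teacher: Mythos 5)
Your proposal is correct and reaches the same conclusion, but the key step---bounding $a_T$---is handled by a genuinely different mechanism than the paper's. The shared skeleton is identical: you bound the bias $\delta_t$ via Lemma \ref{lem:pointdist} and Lemma \ref{lem:hypererror} with $\sigma=\epsilon$, $\epsilon_y=\epsilon_z=\epsilon^2$ to get $\|\delta_t\|=\gO(\epsilon)$, apply the $L_\varphi$-descent lemma with Young's inequality (this is exactly the paper's Lemma \ref{lem:varphidescent}), telescope, and count oracles via Propositions \ref{prop:KtNt} and \ref{prop:KtNt2}. Where you diverge is in controlling $a_T$: you keep the full $\frac{L_\varphi}{2a_{t+1}^2}\|g_t\|^2$ term, sum it to $L_\varphi\log(a_T^2/a_0^2)$ via the AdaGrad-Norm identity, and close a self-bounding quadratic inequality $a_T^2 \le \gO(1) + \gO(a_T) + \gO(a_T\log a_T)$ in the style of Ward et al. The paper instead runs a two-stage threshold argument (Proposition \ref{prop:TKN} with $C_a=\max\{2L_\varphi,a_0\}$, Lemma \ref{lem:atupper}): before $a_t$ crosses $C_a$ the bound is trivial, and after crossing, $1-L_\varphi/a_{t+1}\ge 1/2$ so the increments $\|g_t\|^2/a_{t+1}$ telescope directly against $\varphi(x_{t_1})-\varphi^*$, yielding $a_T\le C_a+2\varphi_0+2T\hat\epsilon/a_0$ with fully explicit constants and no logarithm. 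Your route is more self-contained (no case analysis, no auxiliary propositions on the step-size trajectory) but delivers the constant bounding $a_T$ only implicitly as the solution of $a_T\le C+C'\log a_T$; since that logarithm involves only problem constants and not $\epsilon$, your final rate $\min_t\|\nabla\varphi(x_t)\|=\gO(\epsilon)$ is unaffected, and both arguments are sound. One minor point worth making explicit in your write-up: the implicit inequality only forces $a_T=\gO(1)$ after you argue $a_T^2\lesssim a_T\log a_T$ has no large solutions, which deserves a line of justification.
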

Theorem \ref{thm:conv} establishes that our proposed adaptive algorithms achieve convergence rates comparable to the well-tuned algorithms \citep{kwon2023penalty,chen2024finding}, confirming their computational efficiency. Regarding the complexity bounds for the first-order oracles of Algorithm \ref{alg:AF2BA}: although owing to the lack of problem-specific parameters, the first-order oracle complexity of \textbf{AF${}^2$BA} is $\gO(1/\epsilon^4)$ higher than that of F${}^2$BA \citep{chen2024finding} and $\gO(1/\epsilon^3)$ higher than Prox-F${}^2$BA \citep{kwon2023penalty}, our accelerated variant \textbf{A${}^2$F${}^2$BA}, benefit from Lemma \ref{lem:linear:ACGM}, achieves the best-known $\tilde{\gO}(1/\epsilon^2)$ oracle complexity of the well-tuned F${}^2$BA algorithm \citep{chen2024finding}. In addition, the oracle complexity of \textbf{A${}^2$F${}^2$BA} is also near-optimal, which aligns with the $\gO(1/\epsilon^2)$ rate of gradient descent applied to nonconvex smooth single-level optimization problems \citep{nesterov2018lectures} when ignoring the logarithmic terms.

\begin{remark}
Particularly, when focusing solely on the convergence of the sequence $\{x_t\}$ and without considering whether the lower-level function is strongly convex or not, \cite{yang2025tuning,shi2025adaptive} also establish the convergence of the sequence $\{x_t\}$ at the same $\mathcal{O}(1/\epsilon^2)$ iteration complexity to obtain an $\epsilon$-stationary point of Problem \eqref{p:primal} or the Riemannian variants (cf. \cite[Theorem 1]{yang2025tuning} and \cite[Theorem 3.1]{shi2025adaptive}). A key distinction, however, is that the approximate hypergradients employed in their algorithms require second-order information of $g$, a computationally more expensive requirement than ours, which relies solely on the first-order information.
\end{remark}

\section{Conclusion and Future Works}
This paper proposes adaptive algorithms AF${}^2$BA and A${}^2$F${}^2$BA for solving nonconvex bilevel optimization (NBO) problems under P{\L} conditions, which are the first fully adaptive step-size algorithms for NBO that do not require prior knowledge of problem parameters. We prove that the proposed algorithms achieve $\mathcal{O}(1/\epsilon^2)$ iteration complexity to reach an $\epsilon$-stationary point, matching the iteration complexity of the well-tuned algorithms \citep{chen2024finding,kwon2023penalty}. Moreover, we show that our A${}^2$F${}^2$BA attains a near-optimal first-order oracle complexity of $\tilde{\mathcal{O}}(1/\epsilon^2)$, matching those of the well-tuned algorithms \citep{chen2024finding} and aligning with the complexity of gradient descent for smooth nonconvex single-level optimization \citep{nesterov2018lectures} when ignoring the logarithmic factors.

Notably, this work provides adaptive double-loop algorithms for deterministic NBO problems where the lower-level and penalty functions satisfy the P{\L} conditions. Potential future research directions include: (1) designing single-loop adaptive algorithms \citep{yang2025tuning}; (2) extending the framework to stochastic settings \citep{kwon2023penalty,chen2024finding,huang2023momentum}; (3) addressing NBO problems under more general error bound conditions, e.g., the Kurdyka-{\L}ojasiewicz conditions \citep{attouch2009convergence,attouch2010proximal,frankel2015splitting}.

\bibliographystyle{plainnat}
\bibliography{ref}

@article{yagishita2025simple,
  title={Simple linesearch-free first-order methods for nonconvex optimization},
  author={Yagishita, Shotaro and Ito, Masaru},
  journal={arXiv preprint arXiv:2509.14670},
  year={2025}
}

@article{chen2021closing,
  title={Closing the gap: Tighter analysis of alternating stochastic gradient methods for bilevel problems},
  author={Chen, Tianyi and Sun, Yuejiao and Yin, Wotao},
  journal={Advances in Neural Information Processing Systems},
  volume={34},
  pages={25294--25307},
  year={2021}
}

@article{ji2020convergence,
  title={Convergence of meta-learning with task-specific adaptation over partial parameters},
  author={Ji, Kaiyi and Lee, Jason D and Liang, Yingbin and Poor, H Vincent},
  journal={Advances in Neural Information Processing Systems},
  volume={33},
  pages={11490--11500},
  year={2020}
}

@inproceedings{kwon2023fully,
  title={A fully first-order method for stochastic bilevel optimization},
  author={Kwon, Jeongyeol and Kwon, Dohyun and Wright, Stephen and Nowak, Robert D},
  booktitle={International Conference on Machine Learning},
  pages={18083--18113},
  year={2023},
  organization={PMLR}
}

@article{dagreou2022framework,
  title={A framework for bilevel optimization that enables stochastic and global variance reduction algorithms},
  author={Dagr{\'e}ou, Mathieu and Ablin, Pierre and Vaiter, Samuel and Moreau, Thomas},
  journal={Advances in Neural Information Processing Systems},
  volume={35},
  pages={26698--26710},
  year={2022}
}

@inproceedings{franceschi2017forward,
  title={Forward and reverse gradient-based hyperparameter optimization},
  author={Franceschi, Luca and Donini, Michele and Frasconi, Paolo and Pontil, Massimiliano},
  booktitle={International conference on machine learning},
  pages={1165--1173},
  year={2017},
  organization={PMLR}
}

@inproceedings{chen2024finding,
  title={On finding small hyper-gradients in bilevel optimization: Hardness results and improved analysis},
  author={Chen, Lesi and Xu, Jing and Zhang, Jingzhao},
  booktitle={The Thirty Seventh Annual Conference on Learning Theory},
  pages={947--980},
  year={2024},
  organization={PMLR}
}

@article{ward2020adagrad,
  title={Adagrad stepsizes: Sharp convergence over nonconvex landscapes},
  author={Ward, Rachel and Wu, Xiaoxia and Bottou, Leon},
  journal={Journal of Machine Learning Research},
  volume={21},
  number={219},
  pages={1--30},
  year={2020}
}

@article{ghadimi2018approximation,
  title={Approximation methods for bilevel programming},
  author={Ghadimi, Saeed and Wang, Mengdi},
  journal={arXiv preprint arXiv:1802.02246},
  year={2018}
}

@inproceedings{xie2020linear,
  title={Linear convergence of adaptive stochastic gradient descent},
  author={Xie, Yuege and Wu, Xiaoxia and Ward, Rachel},
  booktitle={International conference on artificial intelligence and statistics},
  pages={1475--1485},
  year={2020},
  organization={PMLR}
}

@inproceedings{pedregosa2016hyperparameter,
  title={Hyperparameter optimization with approximate gradient},
  author={Pedregosa, Fabian},
  booktitle={International conference on machine learning},
  pages={737--746},
  year={2016},
  organization={PMLR}
}

@inproceedings{grazzi2020iteration,
  title={On the iteration complexity of hypergradient computation},
  author={Grazzi, Riccardo and Franceschi, Luca and Pontil, Massimiliano and Salzo, Saverio},
  booktitle={International Conference on Machine Learning},
  pages={3748--3758},
  year={2020},
  organization={PMLR}
}

@inproceedings{ji2021bilevel,
  title={Bilevel optimization: Convergence analysis and enhanced design},
  author={Ji, Kaiyi and Yang, Junjie and Liang, Yingbin},
  booktitle={International conference on machine learning},
  pages={4882--4892},
  year={2021},
  organization={PMLR}
}

@article{hong2023two,
  title={A two-timescale stochastic algorithm framework for bilevel optimization: Complexity analysis and application to actor-critic},
  author={Hong, Mingyi and Wai, Hoi-To and Wang, Zhaoran and Yang, Zhuoran},
  journal={SIAM Journal on Optimization},
  volume={33},
  number={1},
  pages={147--180},
  year={2023},
  publisher={SIAM}
}

@inproceedings{shaban2019truncated,
  title={Truncated back-propagation for bilevel optimization},
  author={Shaban, Amirreza and Cheng, Ching-An and Hatch, Nathan and Boots, Byron},
  booktitle={The 22nd International Conference on Artificial Intelligence and Statistics},
  pages={1723--1732},
  year={2019},
  organization={PMLR}
}

@article{liu2022bome,
  title={Bome! bilevel optimization made easy: A simple first-order approach},
  author={Liu, Bo and Ye, Mao and Wright, Stephen and Stone, Peter and Liu, Qiang},
  journal={Advances in neural information processing systems},
  volume={35},
  pages={17248--17262},
  year={2022}
}

@inproceedings{domke2012generic,
  title={Generic methods for optimization-based modeling},
  author={Domke, Justin},
  booktitle={Artificial Intelligence and Statistics},
  pages={318--326},
  year={2012},
  organization={PMLR}
}

@inproceedings{maclaurin2015gradient,
  title={Gradient-based hyperparameter optimization through reversible learning},
  author={Maclaurin, Dougal and Duvenaud, David and Adams, Ryan},
  booktitle={International conference on machine learning},
  pages={2113--2122},
  year={2015},
  organization={PMLR}
}

@article{ji2022will,
  title={Will bilevel optimizers benefit from loops},
  author={Ji, Kaiyi and Liu, Mingrui and Liang, Yingbin and Ying, Lei},
  journal={Advances in Neural Information Processing Systems},
  volume={35},
  pages={3011--3023},
  year={2022}
}

@inproceedings{franceschi2018bilevel,
  title={Bilevel programming for hyperparameter optimization and meta-learning},
  author={Franceschi, Luca and Frasconi, Paolo and Salzo, Saverio and Grazzi, Riccardo and Pontil, Massimiliano},
  booktitle={International conference on machine learning},
  pages={1568--1577},
  year={2018},
  organization={PMLR}
}

@article{bertinetto2018meta,
  title={Meta-learning with differentiable closed-form solvers},
  author={Bertinetto, Luca and Henriques, Joao F and Torr, Philip HS and Vedaldi, Andrea},
  journal={arXiv preprint arXiv:1805.08136},
  year={2018}
}

@article{rajeswaran2019meta,
  title={Meta-learning with implicit gradients},
  author={Rajeswaran, Aravind and Finn, Chelsea and Kakade, Sham M and Levine, Sergey},
  journal={Advances in neural information processing systems},
  volume={32},
  year={2019}
}

@article{konda1999actor,
  title={Actor-critic algorithms},
  author={Konda, Vijay and Tsitsiklis, John},
  journal={Advances in neural information processing systems},
  volume={12},
  year={1999}
}

@article{kunapuli2008classification,
  title={Classification model selection via bilevel programming},
  author={Kunapuli, Gautam and Bennett, Kristin P and Hu, Jing and Pang, Jong-Shi},
  journal={Optimization Methods \& Software},
  volume={23},
  number={4},
  pages={475--489},
  year={2008},
  publisher={Taylor \& Francis}
}

@article{flamary2014learning,
  title={Learning constrained task similarities in graphregularized multi-task learning},
  author={Flamary, R{\'e}mi and Rakotomamonjy, Alain and Gasso, Gilles},
  journal={Regularization, Optimization, Kernels, and Support Vector Machines},
  volume={103},
  pages={1},
  year={2014},
  publisher={Chapman and Hall/CRC Boca Raton, FL, USA}
}

@inproceedings{yang2025tuning,
 author = {Yang, Yifan and Ban, Hao and Huang, Minhui and Ma, Shiqian and Ji, Kaiyi},
 booktitle = {International Conference on Representation Learning},
 pages = {75764--75814},
 title = {Tuning-Free Bilevel Optimization: New Algorithms and Convergence Analysis},
 volume = {2025},
 year = {2025}
}

@article{yu2020hyper,
  title={Hyper-parameter optimization: A review of algorithms and applications},
  author={Yu, Tong and Zhu, Hong},
  journal={arXiv preprint arXiv:2003.05689},
  year={2020}
}

@article{bishop2020optimal,
  title={Optimal learning from verified training data},
  author={Bishop, Nicholas and Tran-Thanh, Long and Gerding, Enrico},
  journal={Advances in Neural Information Processing Systems},
  volume={33},
  pages={9520--9529},
  year={2020}
}

@inproceedings{wang2022solving,
  title={Solving stackelberg prediction game with least squares loss via spherically constrained least squares reformulation},
  author={Wang, Jiali and Huang, Wen and Jiang, Rujun and Li, Xudong and Wang, Alex L},
  booktitle={International conference on machine learning},
  pages={22665--22679},
  year={2022},
  organization={PMLR}
}

@inproceedings{wang2021fast,
  title={Fast algorithms for stackelberg prediction game with least squares loss},
  author={Wang, Jiali and Chen, He and Jiang, Rujun and Li, Xudong and Li, Zihao},
  booktitle={International Conference on Machine Learning},
  pages={10708--10716},
  year={2021},
  organization={PMLR}
}

@book{nesterov2018lectures,
  title={Lectures on convex optimization},
  author={Nesterov, Yurii and others},
  volume={137},
  year={2018},
  publisher={Springer}
}

@article{shi2025adaptive,
  title={An Adaptive Algorithm for Bilevel Optimization on Riemannian Manifolds},
  author={Shi, Xu and Xiao, Rufeng and Jiang, Rujun},
  journal={arXiv preprint arXiv:2504.06042},
  year={2025}
}

@book{dempe2002foundations,
  title={Foundations of bilevel programming},
  author={Dempe, Stephan},
  year={2002},
  publisher={Springer Science \& Business Media}
}

@inproceedings{karimi2016linear,
  title={Linear convergence of gradient and proximal-gradient methods under the {Polyak-{\L}ojasiewicz} condition},
  author={Karimi, Hamed and Nutini, Julie and Schmidt, Mark},
  booktitle={Joint European conference on machine learning and knowledge discovery in databases},
  year={2016}
}

@inproceedings{kwon2023penalty,
  title={On penalty methods for nonconvex bilevel optimization and first-order stochastic approximation},
  author={Kwon, Jeongyeol and Kwon, Dohyun and Wright, Steve and Nowak, Robert},
  booktitle={ICLR},
  year={2024}
}

@inproceedings{shen2023penalty,
  title={On penalty-based bilevel gradient descent method},
  author={Shen, Han and Chen, Tianyi},
  booktitle={International conference on machine learning},
  pages={30992--31015},
  year={2023},
  organization={PMLR}
}

@article{chen2025near,
  title={Near-optimal nonconvex-strongly-convex bilevel optimization with fully first-order oracles},
  author={Chen, Lesi and Ma, Yaohua and Zhang, Jingzhao},
  journal={Journal of Machine Learning Research},
  volume={26},
  number={109},
  pages={1--56},
  year={2025}
}

@book{dontchev2009implicit,
  title={Implicit functions and solution mappings: A view from variational analysis},
  author={Dontchev, Asen L and Rockafellar, R Tyrrell and Rockafellar, R Tyrrell},
  volume={616},
  year={2009},
  publisher={Springer}
}

@article{polyak1963gradient,
  title={Gradient methods for minimizing functionals},
  author={Polyak, Boris Teodorovich},
  journal={Zhurnal Vychislitel'noi Matematiki i Matematicheskoi Fiziki},
  volume={3},
  number={4},
  pages={643--653},
  year={1963}
}

@article{lojasiewicz1963topological,
  title={A topological property of real analytic subsets},
  author={Lojasiewicz, Stanislaw},
  journal={Coll. du CNRS, Les {\'e}quations aux d{\'e}riv{\'e}es partielles},
  volume={117},
  number={87-89},
  pages={2},
  year={1963}
}

@inproceedings{xiao2023generalized,
  title={An Alternating Method for Bilevel Optimization under the Polyak-{\L}ojasiewicz Condition},
  author={Xiao, Quan and Lu, Songtao and Chen, Tianyi},
  booktitle={NeurIPS},
  year={2023}
}

@inproceedings{liu2021value,
  title={A value-function-based interior-point method for non-convex bi-level optimization},
  author={Liu, Risheng and Liu, Xuan and Yuan, Xiaoming and Zeng, Shangzhi and Zhang, Jin},
  booktitle={International conference on machine learning},
  pages={6882--6892},
  year={2021},
  organization={PMLR}
}

@article{liu2021towards,
  title={Towards gradient-based bilevel optimization with non-convex followers and beyond},
  author={Liu, Risheng and Liu, Yaohua and Zeng, Shangzhi and Zhang, Jin},
  journal={Advances in Neural Information Processing Systems},
  volume={34},
  pages={8662--8675},
  year={2021}
}

@inproceedings{charles2018stability,
  title={Stability and generalization of learning algorithms that converge to global optima},
  author={Charles, Zachary and Papailiopoulos, Dimitris},
  booktitle={ICML},
  year={2018}
}

@inproceedings{polyak1967general,
  title={A general method for solving extremal problems},
  author={Polyak, Boris Teodorovich},
  booktitle={Doklady Akademii Nauk},
  volume={174},
  pages={33--36},
  year={1967},
  organization={Russian Academy of Sciences}
}

@article{liu2022loss,
  title={Loss landscapes and optimization in over-parameterized non-linear systems and neural networks},
  author={Liu, Chaoyue and Zhu, Libin and Belkin, Mikhail},
  journal={Applied and Computational Harmonic Analysis},
  volume={59},
  pages={85--116},
  year={2022}
}

@article{hardt2016identity,
  title={Identity matters in deep learning},
  author={Hardt, Moritz and Ma, Tengyu},
  journal={arXiv preprint arXiv:1611.04231},
  year={2016}
}

@inproceedings{li2018algorithmic,
  title={Algorithmic regularization in over-parameterized matrix sensing and neural networks with quadratic activations},
  author={Li, Yuanzhi and Ma, Tengyu and Zhang, Hongyang},
  booktitle={COLT},
  year={2018}
}

@inproceedings{fazel2018global,
  title={Global convergence of policy gradient methods for the linear quadratic regulator},
  author={Fazel, Maryam and Ge, Rong and Kakade, Sham M. and Mesbahi, Mehran},
  booktitle={ICML},
  year={2018}
}

@article{lu2024first,
  title={First-order penalty methods for bilevel optimization},
  author={Lu, Zhaosong and Mei, Sanyou},
  journal={SIAM Journal on Optimization},
  volume={34},
  number={2},
  pages={1937--1969},
  year={2024},
  publisher={SIAM}
}

@inproceedings{gao2022value,
  title={Value function based difference-of-convex algorithm for bilevel hyperparameter selection problems},
  author={Gao, Lucy L and Ye, Jane and Yin, Haian and Zeng, Shangzhi and Zhang, Jin},
  booktitle={International conference on machine learning},
  pages={7164--7182},
  year={2022},
  organization={PMLR}
}

@article{ye2023difference,
  title={Difference of convex algorithms for bilevel programs with applications in hyperparameter selection},
  author={Ye, Jane J and Yuan, Xiaoming and Zeng, Shangzhi and Zhang, Jin},
  journal={Mathematical Programming},
  volume={198},
  number={2},
  pages={1583--1616},
  year={2023},
  publisher={Springer}
}

@article{le2018dc,
  title={DC programming and DCA: thirty years of developments},
  author={Le Thi, Hoai An and Pham Dinh, Tao},
  journal={Mathematical Programming},
  volume={169},
  number={1},
  pages={5--68},
  year={2018},
  publisher={Springer}
}

@article{huang2023momentum,
  title={On momentum-based gradient methods for bilevel optimization with nonconvex lower-level},
  author={Huang, Feihu},
  journal={arXiv preprint arXiv:2303.03944},
  year={2023}
}

@article{frankel2015splitting,
  title={Splitting methods with variable metric for Kurdyka--{\L}ojasiewicz functions and general convergence rates},
  author={Frankel, Pierre and Garrigos, Guillaume and Peypouquet, Juan},
  journal={Journal of Optimization Theory and Applications},
  volume={165},
  number={3},
  pages={874--900},
  year={2015},
  publisher={Springer}
}

@article{attouch2009convergence,
  title={On the convergence of the proximal algorithm for nonsmooth functions involving analytic features},
  author={Attouch, Hedy and Bolte, J{\'e}r{\^o}me},
  journal={Mathematical Programming},
  volume={116},
  number={1},
  pages={5--16},
  year={2009},
  publisher={Springer}
}

@article{bento2025convergence,
  title={Convergence of descent optimization algorithms under Polyak-{\L}ojasiewicz-Kurdyka conditions},
  author={Bento, Glaydston and Mordukhovich, Boris and Mota, Tiago and Nesterov, Yurii},
  journal={Journal of Optimization Theory and Applications},
  volume={207},
  number={3},
  pages={41},
  year={2025},
  publisher={Springer}
}

@article{attouch2010proximal,
  title={Proximal alternating minimization and projection methods for nonconvex problems: An approach based on the Kurdyka-{\L}ojasiewicz inequality},
  author={Attouch, H{\'e}dy and Bolte, J{\'e}r{\^o}me and Redont, Patrick and Soubeyran, Antoine},
  journal={Mathematics of operations research},
  volume={35},
  number={2},
  pages={438--457},
  year={2010},
  publisher={INFORMS}
}

@article{sinha2017review,
  title={A review on bilevel optimization: From classical to evolutionary approaches and applications},
  author={Sinha, Ankur and Malo, Pekka and Deb, Kalyanmoy},
  journal={IEEE transactions on evolutionary computation},
  volume={22},
  number={2},
  pages={276--295},
  year={2017},
  publisher={IEEE}
}

@article{bracken1973mathematical,
  title={Mathematical programs with optimization problems in the constraints},
  author={Bracken, Jerome and McGill, James T},
  journal={Operations research},
  volume={21},
  number={1},
  pages={37--44},
  year={1973},
  publisher={INFORMS}
}

@inproceedings{chen2024lower,
  title={Lower-level duality based reformulation and majorization minimization algorithm for hyperparameter optimization},
  author={Chen, He and Xu, Haochen and Jiang, Rujun and So, Anthony Man-Cho},
  booktitle={International Conference on Artificial Intelligence and Statistics},
  pages={784--792},
  year={2024},
  organization={PMLR}
}

@inproceedings{jiang2023conditional,
  title={A conditional gradient-based method for simple bilevel optimization with convex lower-level problem},
  author={Jiang, Ruichen and Abolfazli, Nazanin and Mokhtari, Aryan and Hamedani, Erfan Yazdandoost},
  booktitle={Proceedings of the 26th International Conference on Artificial Intelligence and Statistics},
  pages={10305--10323},
  year={2023},
  organization={PMLR}
}

@article{beck2014first,
  title={A first order method for finding minimal norm-like solutions of convex optimization problems},
  author={Beck, Amir and Sabach, Shoham},
  journal={Mathematical Programming},
  volume={147},
  number={1-2},
  pages={25--46},
  year={2014},
  publisher={Springer}
}

@article{sabach2017first,
  title={A first order method for solving convex bilevel optimization problems},
  author={Sabach, Shoham and Shtern, Shimrit},
  journal={SIAM Journal on Optimization},
  volume={27},
  number={2},
  pages={640--660},
  year={2017},
  publisher={SIAM}
}

@article{doron2023methodology,
  title={Methodology and first-order algorithms for solving nonsmooth and non-strongly convex bilevel optimization problems},
  author={Doron, Lior and Shtern, Shimrit},
  journal={Mathematical Programming},
  volume={201},
  number={1},
  pages={521--558},
  year={2023},
  publisher={Springer}
}

@article{merchav2023convex,
  title={Convex Bi-level Optimization Problems with Nonsmooth Outer Objective Function},
  author={Merchav, Roey and Sabach, Shoham},
  journal={SIAM Journal on Optimization},
  volume={33},
  number={4},
  pages={3114--3142},
  year={2023},
  publisher={SIAM}
}

@article{merchav2024fast,
  title={A Fast Algorithm for Convex Composite Bi-Level Optimization},
  author={Merchav, Roey and Sabach, Shoham and Teboulle, Marc},
  journal={arXiv preprint arXiv:2407.21221},
  year={2024}
}

@inproceedings{chen2024penalty,
  title={Penalty-based Methods for Simple Bilevel Optimization under {H}{\"o}lderian Error Bounds},
  author={Chen, Pengyu and Shi, Xu and Jiang, Rujun and Wang, Jiulin},
  booktitle={Proceedings of the 38th International Conference on Neural Information Processing Systems},
  pages = {140731--140765},
  year={2024}
}

@inproceedings{wang2024near,
  title={Near-Optimal Convex Simple Bilevel Optimization with a Bisection Method},
  author={Wang, Jiulin and Shi, Xu and Jiang, Rujun},
  booktitle={Proceedings of the 27th International Conference on Artificial Intelligence and Statistics},
  pages={2008--2016},
  year={2024},
  organization={PMLR}
}

@inproceedings{zhang2024functionally,
  title={Functionally Constrained Algorithm Solves Convex Simple Bilevel Problems},
  author={Zhang, Huaqing and Chen, Lesi and Xu, Jing and Zhang, Jingzhao},
  booktitle={Proceedings of the 38th International Conference on Neural Information Processing Systems},
  pages = {57591--57618},
  year={2024}
}

@inproceedings{cao2024accelerated,
  title={An accelerated gradient method for convex smooth simple bilevel optimization},
  author={Cao, Jincheng and Jiang, Ruichen and Yazdandoost Hamedani, Erfan and Mokhtari, Aryan},
  booktitle={Proceedings of the 38th International Conference on Neural Information Processing Systems},
  pages={45126--45154},
  year={2024}
}

@article{samadi2025iteratively,
  title={On iteratively regularized first-order methods for simple bilevel optimization},
  author={Samadi, Sepideh and Burbano, Daniel and Yousefian, Farzad},
  journal={arXiv preprint arXiv:2504.08079},
  year={2025}
}

@article{pang1997error,
  title={Error bounds in mathematical programming},
  author={Pang, Jong Shi},
  journal={Mathematical Programming},
  volume={79},
  number={1-3},
  pages={299-332},
  year={1997},
}

@article{bolte2017error,
  title={From error bounds to the complexity of first-order descent methods for convex functions},
  author={Bolte, J{\'e}r{\^o}me and Nguyen, Trong Phong and Peypouquet, Juan and Suter, Bruce W},
  journal={Mathematical Programming},
  volume={165},
  pages={471--507},
  year={2017},
  publisher={Springer}
}

@article{jiang2022holderian,
  title={H{\"o}lderian error bounds and kurdyka-{\l}ojasiewicz inequality for the trust region subproblem},
  author={Jiang, Rujun and Li, Xudong},
  journal={Mathematics of Operations Research},
  volume={47},
  number={4},
  pages={3025--3050},
  year={2022},
  publisher={INFORMS}
}

@article{penrose1955generalized,
  title={A generalized inverse for matrices},
  author={Penrose, Roger},
  journal={Mathematical proceedings of the Cambridge philosophical society},
  volume={51},
  number={3},
  pages={406--413},
  year={1955},
  publisher={Cambridge University Press}
}

@article{gaur2025sample,
  title={On The Sample Complexity Bounds In Bilevel Reinforcement Learning},
  author={Gaur, Mudit and Singh, Utsav and Bedi, Amrit Singh and Pasupathu, Raghu and Aggarwal, Vaneet},
  journal={arXiv preprint arXiv:2503.17644},
  year={2025}
}

\newpage
\appendix

\section{Preliminary Lemmas}
The first lemma follows from Lemma 3.2 in \cite{ward2020adagrad}, which characterizes the growth rate of sums of non-negative sequences.
\begin{lemma}[{\citep[Lemma 3.2]{ward2020adagrad}}]
\label{lem:sum}
For any non-negative $a_1,..., a_T$, and $a_1 \ge 1$, we have
\begin{equation*}
\sum_{l=1}^T\frac{a_l}{\sum_{i=1}^l a_i} \le \log\left(\sum_{l=1}^T a_l\right)+1.
\end{equation*}
\end{lemma}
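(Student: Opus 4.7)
The plan is to prove this via a telescoping argument based on the standard concavity-of-logarithm inequality. Specifically, let $S_l = \sum_{i=1}^l a_i$ denote the partial sums, so that $a_l = S_l - S_{l-1}$ for $l \ge 2$. The key analytic fact I would invoke is that for any $0 < S_{l-1} \le S_l$,
\[
\log(S_l) - \log(S_{l-1}) \;=\; \int_{S_{l-1}}^{S_l} \frac{1}{x}\,dx \;\ge\; \frac{S_l - S_{l-1}}{S_l} \;=\; \frac{a_l}{S_l},
\]
since $1/x$ is decreasing. (I would need to handle $a_l = 0$ trivially, noting the corresponding summand vanishes, and assume WLOG $a_l > 0$ for all $l \ge 2$ by removing null terms.)

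Next I would sum this bound over $l = 2, \ldots, T$ and exploit telescoping:
\[
\sum_{l=2}^{T} \frac{a_l}{S_l} \;\le\; \sum_{l=2}^{T}\bigl(\log(S_l) - \log(S_{l-1})\bigr) \;=\; \log(S_T) - \log(S_1) \;=\; \log(S_T) - \log(a_1).
\]
The $l=1$ term is handled separately: $a_1/S_1 = a_1/a_1 = 1$. Combining these,
\[
\sum_{l=1}^{T} \frac{a_l}{S_l} \;\le\; 1 + \log(S_T) - \log(a_1).
\]

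Finally, I would invoke the hypothesis $a_1 \ge 1$, which gives $\log(a_1) \ge 0$ and hence
\[
\sum_{l=1}^{T} \frac{a_l}{S_l} \;\le\; 1 + \log(S_T) \;=\; \log\!\left(\sum_{l=1}^{T} a_l\right) + 1,
\]
yielding the claim. There is no real obstacle here — the only minor care needed is in the edge cases (zero terms in the sequence and the isolation of the first term to avoid dividing by zero or taking $\log 0$), but the hypothesis $a_1 \ge 1 > 0$ ensures $S_l \ge 1$ for all $l$, so every quantity in the telescoping chain is well-defined.
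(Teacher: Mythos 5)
Your argument is correct: the per-term bound $a_l/S_l \le \log(S_l)-\log(S_{l-1})$ from the monotonicity of $1/x$, the telescoping sum, and the use of $a_1\ge 1$ to both make $a_1/S_1=1$ and discard $-\log(a_1)$ are all sound, and the edge cases are handled properly since $S_l\ge a_1\ge 1$ throughout. The paper itself gives no proof of this lemma (it is imported verbatim as Lemma 3.2 of the cited AdaGrad-Norm reference), and your telescoping/integral-comparison argument is essentially the standard proof given in that source, so there is nothing to reconcile.
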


The following lemma establishes the upper bounds on the distances between the approximate solutions of Problems \eqref{p:lower} and \eqref{p:penalty} and their optimal solution sets.
\begin{lemma}
\label{lem:pointdist}
Suppose that Assumption \ref{ass:basic} holds. Then, for any $t \ge 0$ in Algorithm \ref{alg:AF2BA}, we have
\begin{equation}
\dist(z_t^{K_t},Y^*(x)) \le \frac{\epsilon_z}{\mu} ~~\text{and}~~ 
\dist(y_t^{N_t},Y_{\sigma}^*(x)) \le \frac{\epsilon_y}{\mu}. \notag
\end{equation}
\end{lemma}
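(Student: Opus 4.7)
The plan is to combine the termination criterion of the inner subroutines with the lower bound in Lemma \ref{lem:PL}. Both Algorithm \ref{alg:adagrad} and Algorithm \ref{alg:AC-GM} use the stopping rule that the while-loop exits as soon as $\|\nabla h(x_k)\| \le \epsilon_h$. Therefore, regardless of which subroutine Algorithm \ref{alg:AF2BA} selects (Lines \ref{alg:AF2BA:adagnlow}/\ref{alg:AF2BA:acgmlow} for the lower-level call and Lines \ref{alg:AF2BA:adagnpen}/\ref{alg:AF2BA:acgmpen} for the penalty call), the returned points $z_t^{K_t}$ and $y_t^{N_t}$ satisfy
\[
\|\nabla_y g(x_t, z_t^{K_t})\| \le \epsilon_z, \qquad \|\nabla_y g_{\sigma}(x_t, y_t^{N_t})\| \le \epsilon_y,
\]
where $g_\sigma(x_t,\cdot) = \sigma f(x_t,\cdot) + g(x_t,\cdot)$. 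Here the ``$x$'' appearing in the statement should be read as the current outer iterate $x_t$, to match the algorithmic notation.

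Next, I would check that each sliced function fits the hypotheses of Lemma \ref{lem:PL}. Instantiating Assumption \ref{ass:basic}(1) at $\sigma = 0$ yields that $g(x_t,\cdot)$ is $\mu$-P{\L} in $y$, and Assumption \ref{ass:basic}(3) makes its gradient $L_g$-Lipschitz. Lemma \ref{lem:PL} then gives
\[
\mu\, \dist(z_t^{K_t}, Y^*(x_t)) \le \|\nabla_y g(x_t, z_t^{K_t})\| \le \epsilon_z,
\]
which is the first bound after dividing by $\mu$. For the second bound, Assumption \ref{ass:basic}(1) applied to any $\sigma \in [0,\bar\sigma]$ (which is satisfied under Assumption \ref{ass:sigma}, though not needed for this lemma) gives the $\mu$-P{\L} property of $g_\sigma(x_t,\cdot)$, while Assumptions \ref{ass:basic}(2)--(3) make its gradient $(\sigma L_f + L_g)$-Lipschitz; Lemma \ref{lem:PL} then produces
\[
\mu\, \dist(y_t^{N_t}, Y_\sigma^*(x_t)) \le \|\nabla_y g_\sigma(x_t, y_t^{N_t})\| \le \epsilon_y.
\]

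There is essentially no obstacle in this argument; it is a direct one-line application of the P{\L} bound once the stopping criterion is read off. The only background fact I would invoke is that the inner subroutines do terminate in finite time, which is guaranteed by the convergence result for AdaGrad-Norm on P{\L} objectives \citep{xie2020linear} and by Lemma \ref{lem:linear:ACGM} for AC-GM; hence $K_t$ and $N_t$ are well defined and the termination bounds on $\|\nabla h\|$ hold.
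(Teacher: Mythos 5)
Your proposal is correct and follows essentially the same route as the paper: read off the stopping criteria $\|\nabla_y g(x_t,z_t^{K_t})\|\le\epsilon_z$ and $\|\nabla_y g_\sigma(x_t,y_t^{N_t})\|\le\epsilon_y$, then apply the lower bound $\mu\,\dist(\cdot,X_h^*)\le\|\nabla h(\cdot)\|$ from Lemma \ref{lem:PL}. The extra checks you add (that the sliced functions satisfy the hypotheses of Lemma \ref{lem:PL}, and that the subroutines terminate) are sound but not spelled out in the paper's own two-line proof.
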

\begin{proof}
According to the stop criteria of the subproblems in Algorithm \ref{alg:AF2BA}, we have 
\begin{equation}
\|\nabla_y g(x_t, z_t^{N_t})\|\le \epsilon_z, \quad \|\sigma \nabla_y f(x_t,y_t^{N_t}) + \nabla_y g(x_t,y_t^{N_t})\| \le {\epsilon_y}. \notag
\end{equation}
Then, by Lemma \ref{lem:PL}, it holds that
\begingroup
\allowdisplaybreaks
\begin{align}
&\dist(z_t^{K_t},Y^*(x)) \le \frac{1}{\mu}\left\|\nabla_y g(x_t, z_t^{K_t})\right\| 
\le \frac{\epsilon_z}{\mu}, \notag \\
&\dist(y_t^{N_t},Y_{\sigma}^*(x)) \le \frac{1}{\mu}\left\|\sigma \nabla_y f(x_t,y_t^{N_t}) + \nabla_y g(x_t,y_t^{N_t})\right\| \le \frac{\epsilon_y}{\mu}.\notag
\end{align}
\endgroup
The proof is complete. 
\end{proof}

The following lemma shows that the approximate hypergradient $\widehat{\nabla} \varphi$ can be bounded.
\begin{lemma}
\label{lem:boundhyperg}
Suppose that Assumptions \ref{ass:basic} and \ref{ass:sigma} hold. Then, for any $t \ge 0$ in Algorithm \ref{alg:AF2BA}, we have $\|\widehat{\nabla} \varphi(x_t,y_{t}^{N_t},z_{t}^{K_t})\| \le C_{\varphi}$, where
\[
C_{\varphi} := \left(l_f + \frac{L_g l_f}{\mu}\right) + \bar{C}_{\sigma} + \frac{L_f}{\mu} + 2\frac{L_g}{\mu}.
\]
\end{lemma}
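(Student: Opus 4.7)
The plan is to split $\widehat{\nabla}\varphi(x_t, y_t^{N_t}, z_t^{K_t})$ into the exact hypergradient $\nabla\varphi(x_t)$ plus the approximation error, bound each piece separately, and sum. Concretely, using the triangle inequality,
\[
\|\widehat{\nabla}\varphi(x_t, y_t^{N_t}, z_t^{K_t})\| \le \|\nabla\varphi(x_t)\| + \|\widehat{\nabla}\varphi(x_t, y_t^{N_t}, z_t^{K_t}) - \nabla\varphi(x_t)\|,
\]
and then handle the two terms by invoking previously established lemmas.

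First I would bound $\|\nabla\varphi(x_t)\|$ using the closed-form expression in Lemma \ref{lem:nablavarphi}. The Lipschitz continuity of $f$ (Assumption \ref{ass:basic}(2)) gives $\|\nabla_x f(x_t, y^*(x_t))\| \le l_f$ and $\|\nabla_y f(x_t, y^*(x_t))\| \le l_f$. The Lipschitz gradient assumption on $g$ (Assumption \ref{ass:basic}(3)) yields $\|\nabla_{xy}^2 g(x_t, y^*(x_t))\| \le L_g$. For the Moore--Penrose inverse, Lemma \ref{lem:eigen} applied to $g(x_t,\cdot)$ gives $\lambda_{\min}^+(\nabla_{yy}^2 g(x_t, y^*(x_t))) \ge \mu$, hence $\|(\nabla_{yy}^2 g(x_t, y^*(x_t)))^\dagger\| \le 1/\mu$. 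Combining these yields $\|\nabla\varphi(x_t)\| \le l_f + L_g l_f/\mu$.

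Next I would bound the error term via Lemma \ref{lem:hypererror}:
\[
\|\widehat{\nabla}\varphi(x_t, y_t^{N_t}, z_t^{K_t}) - \nabla\varphi(x_t)\| \le \bar{C}_\sigma \sigma + L_f \dist(y_t^{N_t}, Y_\sigma^*(x_t)) + \tfrac{L_g}{\sigma}\dist(y_t^{N_t}, Y_\sigma^*(x_t)) + \tfrac{L_g}{\sigma}\dist(z_t^{K_t}, Y^*(x_t)).
\]
Substituting the distance estimates from Lemma \ref{lem:pointdist}, namely $\dist(y_t^{N_t}, Y_\sigma^*(x_t)) \le \epsilon_y/\mu$ and $\dist(z_t^{K_t}, Y^*(x_t)) \le \epsilon_z/\mu$, together with the algorithmic choices $\sigma = \epsilon$ and $\epsilon_z = \epsilon_y = \epsilon^2$ set in Line \ref{alg:AF2BA:sigma} of Algorithm \ref{alg:AF2BA}, each of these terms collapses into $\bar{C}_\sigma \epsilon$, $L_f \epsilon^2/\mu$, $L_g \epsilon/\mu$, and $L_g \epsilon/\mu$, respectively. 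Assuming the error tolerance satisfies $\epsilon \le 1$ (which is implicit in the ``sufficiently small'' regime of Assumption \ref{ass:sigma}), these four terms are dominated by $\bar{C}_\sigma + L_f/\mu + 2L_g/\mu$.

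Adding the two bounds gives exactly $C_\varphi = (l_f + L_g l_f/\mu) + \bar{C}_\sigma + L_f/\mu + 2L_g/\mu$, completing the proof. There is no real obstacle here beyond careful bookkeeping: the only subtle point is the pseudoinverse bound, which relies on Lemma \ref{lem:eigen} rather than direct invertibility of $\nabla_{yy}^2 g$, since under the P{\L} condition the Hessian may be singular but its smallest \emph{nonzero} eigenvalue is uniformly bounded below by $\mu$.
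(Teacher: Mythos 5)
Your proposal is correct and follows essentially the same route as the paper: the triangle-inequality split into $\|\nabla\varphi(x_t)\|$ plus the approximation error, the bound $l_f + L_g l_f/\mu$ on the exact hypergradient via its closed form, and the error bound via Lemma \ref{lem:hypererror} combined with Lemma \ref{lem:pointdist} and the choices $\sigma=\epsilon$, $\epsilon_y=\epsilon_z=\epsilon^2$. If anything, you give slightly more detail than the paper on the first term by making the pseudoinverse bound via Lemma \ref{lem:eigen} explicit, and you correctly flag the implicit $\epsilon\le 1$ assumption used to absorb the $\epsilon$ factors into constants.
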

\begin{proof}
From Lemmas \ref{lem:nablalimit} and \ref{lem:nablavarphi}, we have
\begingroup
\allowdisplaybreaks
\begin{align}
&\|\widehat{\nabla} \varphi(x_t,y_{t}^{N_t},z_{t}^{K_t})\| \notag \\
\le & \|\nabla\varphi(x_t)\| + \|\nabla\varphi(x_t) - \widehat{\nabla} \varphi(x_t,y_{t}^{N_t},z_{t}^{K_t})\|\notag\\
\le &\left(l_f + \frac{L_g l_f}{\mu}\right) + \bar{C}_{\sigma} \sigma + \left(L_f \dist(y_t^{N_t},Y_{\sigma}^*(x_t)) + \frac{L_g}{\sigma}\dist(y_t^{N_t},Y_{\sigma}^*(x_t)) + \frac{L_g}{\sigma}\dist(z_t^{K_t},Y^*(x_t))\right)\notag\\
\le & \left(l_f + \frac{L_g l_f}{\mu}\right) + \bar{C}_{\sigma}\sigma + \frac{L_f}{\mu}\epsilon_z +  \frac{L_g}{\mu \sigma}\epsilon_y + \frac{L_g}{\mu \sigma}\epsilon_z\notag\\
= & \left(l_f + \frac{L_g l_f}{\mu}\right) + \bar{C}_{\sigma}\sigma + \frac{L_f}{\mu}\epsilon^2 +  \frac{L_g}{\mu}\epsilon + \frac{L_g}{\mu}\epsilon\notag\\
\le & \left(l_f + \frac{L_g l_f}{\mu}\right) + \bar{C}_{\sigma} + \frac{L_f}{\mu} +  2\frac{L_g}{\mu},\notag
\end{align}
\endgroup
where the first inequality follows from Lemma \ref{lem:hypererror}, the third inequality follows from Lemma \ref{lem:pointdist}, and the equality follows from the fact that $\epsilon_z = \epsilon_y = \epsilon^2$ and $\sigma = \epsilon$.
\end{proof}

\section{Proof of Section \ref{sec:Ada}}
\label{app:section3}
\subsection{Proof of Lemma \ref{lem:hypererror}}
\begin{proof}
We first explore the relationship between $\widehat{\nabla} \varphi(x,\hat{y},\hat{z})$ and $\nabla_{\sigma}\varphi(x)$. By the definitions of $\nabla_{\sigma}\varphi(x)$ and $\widehat{\nabla} \varphi(x,\hat{y},\hat{z})$ in \eqref{equ:lem:nablasigma} and \eqref{equ:apprhyper}, respectively, then for any $y^*(x) \in Y^*(x)$ and $y_{\sigma}^*(x) \in Y_{\sigma}^*(x)$, we have
\begingroup
\allowdisplaybreaks
\begin{align}
&\|\widehat{\nabla} \varphi(x,\hat{y},\hat{z}) - \nabla_{\sigma}\varphi(x)\| \notag \\ \le & \|\nabla_x f(x,\hat{y})) - \nabla_x f(x,y_{\sigma}^*(x)))\| + \frac{1}{\sigma}\left(\|\nabla_x g(x,\hat{y}) - \nabla_x g(x,y^*_{\sigma}(x))\| + \|\nabla_x g(x,\hat{z}) - \nabla_x g(x,y^*(x))\|\right)\notag\\
\le & L_f \dist(\hat{y}, y^*_{\sigma}(x_t)) + \frac{L_g}{\sigma}\dist(\hat{y},Y_{\sigma}^*(x)) + \frac{L_g}{\sigma}\dist(\hat{z},Y^*(x)),\notag
\end{align}
\endgroup
where the second inequality follows from Assumption \ref{ass:basic}.

Then, by the definition of $\nabla \varphi(x)$ in Lemma \ref{lem:nablavarphi}, we have
\begingroup
\allowdisplaybreaks
\begin{align}
\left\|\nabla\varphi(x_t) - \widehat{\nabla}\varphi(x, \hat{y}, \hat{z})\right\| \le& \|\nabla\varphi(x) - \nabla_{\sigma}\varphi(x)\| + \|\nabla_{\sigma}\varphi(x) - \widehat{\nabla}\varphi(x, \hat{y}, \hat{z})\| \notag \\
\le & \bar{C}_{\sigma} \sigma + \left(L_f \dist(\hat{y},Y_{\sigma}^*(x)) + \frac{L_g}{\sigma}\dist(\hat{y},Y_{\sigma}^*(x)) + \frac{L_g}{\sigma}\dist(\hat{z},Y^*(x))\right),\notag
\end{align}
\endgroup
where the second inequality follows from Lemma \ref{lem:nablalimit}.
\end{proof}

\subsection{Proof of Proposition \ref{prop:KtNt}}
Inspired by Proposition 1 in \cite{yang2025tuning}, we first consider the two-stage processes of the step sizes $a_t$, $b_k$, and $c_n$.
\begin{proposition}
\label{prop:TKN}
Suppose that Assumptions \ref{ass:basic} and \ref{ass:inf} hold. Denote $\{T,K,N\}$ as the iterations of $\{x,z,y\}$. Given any constants $C_a \ge a_0$, $C_b \ge b_0$, $C_c \ge c_0$, then, we have
\begin{enumerate}[(1)]
\item either $ a_t \le C_a$ for any $t \le T$, or $\exists t_1 \le T$ such that $ a_{t_1} \le C_a$, $ a_{t_1+1} > C_a$; 
\item either $ b_k \le C_b$ for any $k \le K$, or $\exists k_1 \le K$ such that $ b_{k_1} \le C_b$, $ b_{k_1+1} > C_b$; 
\item either $ c_n \le C_c$ for any $n \le N$, or $\exists n_1 \le N$ such that $ c_{n_1} \le C_c$, $ c_{n_1+1} > C_c$.
\end{enumerate}
\end{proposition}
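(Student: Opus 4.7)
The proposition is essentially a monotonicity/dichotomy statement, and the plan is to exploit the simple fact that all three accumulator sequences $\{a_t\}$, $\{b_k\}$, $\{c_n\}$ are monotone non-decreasing by construction. Specifically, from Line \ref{alg:AF2BA:at+1} of Algorithm \ref{alg:AF2BA} we have
\begin{equation*}
a_{t+1}^2 = a_t^2 + \|\widehat{\nabla}\varphi(x_t,y_t^{N_t},z_t^{K_t})\|^2 \ge a_t^2,
\end{equation*}
and since $a_t \ge a_0 > 0$ we obtain $a_{t+1} \ge a_t$. An analogous argument applied to the updates $\alpha_{k+1}^2 = \alpha_k^2 + \|\nabla h(x_k)\|^2$ in Algorithm \ref{alg:adagrad} gives $b_{k+1} \ge b_k$ when the subroutine is invoked on $g(x_t,\cdot)$ (Line \ref{alg:AF2BA:adagnlow}) and $c_{n+1} \ge c_n$ when invoked on $\sigma f(x_t,\cdot)+g(x_t,\cdot)$ (Line \ref{alg:AF2BA:adagnpen}).

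Next I would combine this monotonicity with the initial conditions $a_0 \le C_a$, $b_0 \le C_b$, $c_0 \le C_c$ guaranteed by the assumption of the proposition. For the sequence $\{a_t\}_{t=0}^{T}$, define the set $\gS_a := \{t \le T : a_t > C_a\}$. If $\gS_a = \emptyset$, then $a_t \le C_a$ for every $t \le T$, which is the first alternative. Otherwise, since $\gS_a \subseteq \{0,1,\dots,T\}$ is finite and nonempty, it has a smallest element, and because $a_0 \le C_a$ this smallest element is at least $1$; call it $t_1+1$. Then $a_{t_1+1} > C_a$ while $a_{t_1} \le C_a$, which gives the second alternative. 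Applying the identical argument with $(b_k, C_b, K)$ and $(c_n, C_c, N)$ in place of $(a_t, C_a, T)$ completes the three cases.

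There is essentially no analytic obstacle in this proposition, since it is purely a structural observation about monotone non-decreasing sequences crossing a threshold. The only point worth emphasizing in the write-up is that the dichotomy relies on three ingredients together: non-negativity of the squared increments in the update rules, the initial conditions $a_0 \le C_a$, $b_0 \le C_b$, $c_0 \le C_c$, and the finite-index nature of the ranges $t \le T$, $k \le K$, $n \le N$. The resulting proof should be only a few lines and serves as a preparatory lemma for the two-stage analysis of the step sizes carried out in the proof of Proposition \ref{prop:KtNt}.
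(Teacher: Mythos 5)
Your argument is correct: the paper itself states Proposition \ref{prop:TKN} without an explicit proof (deferring to its inspiration from Proposition 1 of \cite{yang2025tuning}), and your first-crossing-index argument combined with the monotone non-decreasing nature of the accumulators $a_t$, $b_k$, $c_n$ is exactly the intended justification. One minor observation: the dichotomy as literally stated needs only the initial conditions $a_0 \le C_a$, $b_0 \le C_b$, $c_0 \le C_c$ and finiteness of the index range (take the smallest index exceeding the threshold), while monotonicity is what makes the split useful downstream, since it guarantees the sequence stays above the threshold after the crossing.
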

Here, we define the following constants as thresholds of the step sizes $a_t$, $b_{k}$, $c_{n}$ in Proposition \ref{prop:TKN}: 
\begin{equation}
\label{equ:abc}
C_a := \max\left\{2L_{\varphi}, a_0 \right\}, \quad
C_b := \max\left\{L_{g}, b_0\right\}, \quad 
C_c := \max\left\{L_f + L_g, c_0\right\}.
\end{equation}
Then, we can give the proof of Proposition \ref{prop:KtNt}.
\begin{proof}
Denote
\begin{equation*}
\bar{K} := \frac{\log(C_b^2/b_0^2)}{\log(1 + \epsilon_z^2/C_b^2)} + \frac{b_{\max}}{\mu}\log\left(\frac{L_g^2 (b_{\max}-C_b)}{\mu \epsilon_z^2}\right),
\end{equation*}
and
\begin{equation*}
\bar{N} := \frac{\log(C_c^2/c_0^2)}{\log(1 + \epsilon_y^2/C_c^2)} + \frac{c_{\max}}{\mu}\log\left(\frac{L_{\sigma}^2 (c_{\max}-C_b)}{\mu \epsilon_y^2}\right),
\end{equation*}
where $b_{\max}$ and $c_{\max}$ is defined in \eqref{equ:bK2} and \eqref{equ:cN2}, respectively, and $L_{\sigma} := L_f + L_g$.

\noindent
\textbf{We first show that $K_t \le \bar{K}$ for all $0 \le t \le T-1$.}

\noindent
\textbf{If $k_1$ in Proposition \ref{prop:TKN} does not exist}, it holds that $b_{K_t} \le C_b$. Then, by \cite[Lemma 2]{xie2020linear}, we must have $K_t \le \frac{\log(C_b^2/b_0^2)}{\log(1 + \epsilon_z^2/C_b^2)}$. If not, since $\|\nabla_y g(x_t,y_t^{k})\| > \epsilon_z$ and $b_k \le C_b$ hold for all $k \le K_t$, we have
\begingroup
\allowdisplaybreaks
\begin{align}
\label{equ:Kt1}
b_{K_t}^2 =& b_{K_t-1}^2 + \|\nabla_y g(x_t,z_t^{K_t-1})\|^2 = b_{K_t-1}^2\left(1+\frac{\|\nabla_y g(x_t,z_t^{K_t-1})\|^2}{b_{K_t-1}^2}\right) \notag \\
\ge& b_0^2 \prod_{k=0}^{K_t-1}\left(1+\frac{\|\nabla_y g(x_t,z_t^{k})\|^2}{b_{k}^2}\right) \ge b_0^2 \left(1+\frac{\epsilon_z^2}{C_b^2}\right)^{K_t} \notag \\
> & b_0^2 \left(1+\frac{\epsilon_z^2}{C_b^2}\right)^{\frac{\log(C_b^2/b_0^2)}{\log(1 + \epsilon_z^2/C_b^2)}} \ge C_b^2,
\end{align}
\endgroup
which contradicts the fact that $b_{K_t} \le C_b$. 

\noindent
\textbf{If $k_1$ in Proposition \ref{prop:TKN} exists}, then we have $b_{k_1} \le C_b$ and $b_{k_1+1} > C_b$. In addition, similar to \eqref{equ:Kt1}, we also have $k_1 \le \frac{\log(C_b^2/ b_0^2)}{\log(1+\epsilon_y/C_b^2)}$.

From Lemma 4.2 in \cite{ward2020adagrad}, by the $L_g$-Lipschitz of $\nabla_y g$, we have
\begingroup
\allowdisplaybreaks
\begin{align}
\label{equ:Lipg}
g(x_t,z_t^{k_1}) \le& g(x_t,z_t^{k_1 - 1}) + \langle \nabla_y g(x_t,z_t^{k_1 - 1}), z_t^{k_1} - z_t^{k_1 - 1}\rangle + \frac{L_g}{2} \|z_t^{k_1} - z_t^{k_1 - 1}\|^2\notag\\
\le& g(x_t,z_t^{k_1 - 1}) + \frac{L_g}{2 b_{k_1}^2}\|\nabla_y g(x_t,z_t^{k_1 - 1})\|^2 \notag\\
\le & g(x_t,z_t^{0}) + \frac{L_g}{2}\sum_{i = 0}^{k_1 - 1}\frac{\|\nabla_y g(x_t,z_t^{i})\|^2}{b_{i+1}^2}\notag\\
\le& g(x_t,z_t^{0}) + \frac{L_g}{2}\sum_{i = 0}^{k_1 - 1}\frac{\|\nabla_y g(x_t,z_t^{i})\|^2 / b_0^2}{\sum_{l = 0}^i \|\nabla_y g(x_t,z_t^{l})\|^2 / b_0^2 + 1}\notag\\
\le & g(x_t,z_t^{0}) + \frac{L_g}{2}\left(1 + \log\left(1 + \sum_{i = 0}^{k_1 - 1}\frac{\|\nabla_y g(x_t,z_t^{i})\|^2}{b_0^2}\right)\right)\notag\\
\le & g(x_t,z_{t-1}^{K_{t-1}}) + \frac{L_g}{2}\left(1 + \log\frac{C_b^2}{b_0^2}\right),
\end{align}
\endgroup
where the fourth inequality follows from the definition of $b_i$, the fifth inequality follows from Lemma \ref{lem:sum}, and the last inequality follows from the setting that $z_t^0 = z_{t-1}^{K_{t-1}}$.

Therefore, for any $y^*(x_t) \in Y^*(x_t)$, we have
\begin{equation}
\label{equ:Lipg2}
g(x_t,z_t^{k_1}) - g(x_t,y^*(x_t)) \overset{\eqref{equ:Lipg}}{\le} g(x_t,z_{t-1}^{K_{t-1}}) - g(x_t,y^*(x_t)) + \frac{L_g}{2}\left(1 + \log\frac{C_b^2}{b_0^2}\right)
\end{equation}
For the first term of the right hand in \eqref{equ:Lipg2}, by Assumption \ref{ass:basic} and Young's inequality, we have
\begingroup
\allowdisplaybreaks
\begin{align}
\label{equ:Lipg2first}
g(x_t,z_{t-1}^{K_{t-1}}) - g(x_t,y^*(x_t)) \le & \frac{1}{2\mu}\|\nabla_y g(x_t,z_{t-1}^{K_{t-1}})\|^2 \le \frac{L_g^2}{2\mu}\dist(z_{t-1}^{K_{t-1}}, Y^*(x_t))^2\notag\\
\le & \frac{L_g^2}{\mu}\dist(z_{t-1}^{K_{t-1}}, Y^*(x_{t-1}))^2 + \frac{L_g^2}{\mu}\dist(Y^*(x_{t-1}), Y^*(x_{t}))^2\notag\\
\le & \frac{L_g^2}{\mu^3}\epsilon_z^2 + \frac{L_g^4}{\mu^3}\|x_{t-1} - x_{t}\|^2 \le \frac{L_g^2}{\mu^3}\epsilon_z^2 + \frac{L_g^4}{\mu^3 a_0^2}C_{\varphi}^2,
\end{align}
\endgroup
where the fourth inequality follows from Lemmas \ref{lem:Ylips} and \ref{lem:pointdist}, and the last inequality follows from Lemma \ref{lem:boundhyperg} and the fact that $a_t \ge a_0$.

Therefore, from \eqref{equ:Lipg2}, we have
\begingroup
\allowdisplaybreaks
\begin{align}
\label{equ:gzk1upper}
g(x_t,z_t^{k_1}) - g(x_t,y^*(x_t)) \le& g(x_t,z_{t-1}^{K_{t-1}}) - g(x_t,y^*(x_t)) + \frac{L_g}{2}\left(1 + \log\frac{C_b^2}{b_0^2}\right)\notag\\
\overset{\eqref{equ:Lipg2first}}{\le} & \frac{L_g^2}{\mu^3}\epsilon_z^2 + \frac{L_g^4}{\mu^3 a_0^2}C_{\varphi}^2 + \frac{L_g}{2}\left(1 + \log\frac{C_b^2}{b_0^2}\right).
\end{align}
\endgroup
For all $K > k_1$ and $y^*(x_t) \in Y^*(x_t)$, we have
\begingroup
\allowdisplaybreaks
\begin{align}
\label{equ:gzK}
g(x_t,z_t^{K}) \le & g(x_t,z_t^{K-1}) + \langle \nabla_y g(x_t,z_t^{K - 1}), z_t^{K} - z_t^{K - 1}\rangle + \frac{L_g}{2} \|z_t^{K} - z_t^{K - 1}\|^2\notag\\
\le& g(x_t,z_t^{K - 1}) + \langle \nabla_y g(x_t,z_t^{K - 1}), z_t^{K} - z_t^{K - 1}\rangle + \frac{L_g}{2} \|z_t^{K} - z_t^{K - 1}\|^2\notag\\
\le & g(x_t,z_t^{K - 1}) - \frac{1}{b_K}\left(1 - \frac{L_g}{2b_K}\right)\|\nabla_y g(x_t,z_t^{k_1 - 1})\|^2 \notag\\
\le& g(x_t,z_t^{K - 1}) - \frac{1}{2 b_{K}}\|\nabla_y g(x_t,z_t^{K - 1})\|^2 \notag\\
\le& g(x_t,z_t^{K - 1}) + \frac{\mu}{b_{K}}(g(x_t,y^*(x_t)) - g(x_t,z_t^{K - 1})),
\end{align}
\endgroup
where the fourth inequality follows from the fact that $b_K > C_b \ge L_g$ (cf. \eqref{equ:abc}) and the last inequality follows from the $\mu$-P{\L} condition of $g$.

Therefore, for any $y^*(x_t) \in Y^*(x_t)$, we have
\begingroup
\allowdisplaybreaks
\begin{align}
\label{equ:gzK2}
g(x_t,z_t^{K}) - g(x_t,y^*(x_t))
\le& g(x_t,z_t^{K - 1})  - g(x_t,y^*(x_t)) + \frac{\mu}{b_{K}}(g(x_t,z_t^{K - 1}) - g(x_t,y^*(x_t)))\notag\\
\le & \left(1 - \frac{\mu}{b_{K}}\right)(g(x_t,z_t^{K-1}) - g(x_t,y^*(x_t)))\notag\\
\le & \left(1 - \frac{\mu}{b_{K}}\right)^{K - k_1}(g(x_t,z_t^{k_1}) - g(x_t,y^*(x_t)))\notag\\
\le & e^{-\frac{\mu(K - k_1)}{b_K}}(g(x_t,z_t^{k_1}) - g(x_t,y^*(x_t)))\notag\\
\overset{\eqref{equ:gzk1upper}}{\le} & e^{-\frac{\mu(K - k_1)}{b_K}}\left(\frac{L_g^2}{\mu^3}\epsilon_z^2 + \frac{L_g^4}{\mu^3 a_0^2}C_{\varphi}^2 + \frac{L_g}{2}\left(1 + \log\frac{C_b^2}{b_0^2}\right)\right),
\end{align}
\endgroup
where the third inequality follows from the fact that $b_k \le b_K$ for all $k_1 \le k < K$.

By the update mode of $b_{k}$, it holds that 
\begin{equation}
\label{equ:bK}
b_{K} = b_{K-1} + \frac{\|\nabla_y g(x_t, z_t^{K-1})\|^2}{b_{k}+b_{K-1}} \le b_{k_1} + \sum_{k=k_1}^{K-1}\frac{\|\nabla_y g(x_t, z_t^{k})\|^2}{b_{k+1}}.
\end{equation}
Therefore, to establish an upper bound for $b_{K}$, it suffices to bound the final term on the right-hand side of \eqref{equ:bK}. First, using the fourth inequality in \eqref{equ:gzK}, we obtain
\begingroup
\allowdisplaybreaks
\begin{align}
g(x_t,z_t^{K}) - g(x_t,y^*(x_t))
\le& g(x_t,z_t^{K - 1}) - g(x_t,y^*(x_t)) - \frac{1}{2 b_{K}}\|\nabla_y g(x_t,z_t^{K - 1})\|^2 \notag\\
\le & g(x_t,z_t^{k_1}) - g(x_t,y^*(x_t)) - \sum_{k = k_1}^{K-1} \frac{\|\nabla_y g(x_t,z_t^{K - 1})\|^2}{2 b_{k}},\notag
\end{align}
\endgroup
which implies that
\begingroup
\allowdisplaybreaks
\begin{align}
\sum_{k = k_1}^{K-1} \frac{\|\nabla_y g(x_t,z_t^{K - 1})\|^2}{b_{k}} &\le 2(g(x_t,z_t^{k_1}) - g(x_t,y^*(x_t))) - 2(g(x_t,z_t^{K}) - g(x_t,y^*(x_t)))\notag\\
&\le 2(g(x_t,z_t^{k_1}) - g(x_t,y^*(x_t))),\notag
\end{align}
\endgroup
where the last inequality follows from the fact that $g(x_t,z_t^{K}) - g(x_t,y^*(x_t)) \ge 0$.

Plugging this into \eqref{equ:bK}, it holds that
\begin{equation}
\label{equ:bK2}
b_K \le b_{k_1} + 2(g(x_t,z_t^{k_1}) - g(x_t,y^*(x_t))) \overset{\eqref{equ:gzk1upper}}{\le} C_b + 2\left(\frac{L_g^2}{\mu^3}\epsilon_z^2 + \frac{L_g^4}{\mu^3 a_0^2}C_{\varphi}^2 + \frac{L_g}{2}\left(1 + \log\frac{C_b^2}{b_0^2}\right)\right):=b_{\max}.
\end{equation}
Then, plugging \eqref{equ:bK2} into \eqref{equ:gzK2}, we have
\[
g(x_t,z_t^{K}) - g(x_t,y^*(x_t)) \le e^{-\frac{\mu(K - k_1)}{b_{\max}}}\left(\frac{L_g^2}{\mu^3}\epsilon_z^2 + \frac{L_g^4}{\mu^3 a_0^2}C_{\varphi}^2 + \frac{L_g}{2}\left(1 + \log\frac{C_b^2}{b_0^2}\right)\right) = e^{-\frac{\mu(K - k_1)}{b_{\max}}}\left(\frac{b_{\max} - C_b}{2}\right).
\]
Then, by Lemma \ref{lem:PL}, we have
\begin{equation}
\label{equ:distzK}
\dist(z_t^{K}, Y^*(x_t))^2 \le \frac{2}{\mu}e^{-\frac{\mu(K - k_1)}{b_{\max}}}\left(\frac{b_{\max} - C_b}{2}\right) = e^{-\frac{\mu(K - k_1)}{b_{\max}}} \frac{b_{\max} - C_b}{\mu}.
\end{equation}
Let
\[
\bar{K} := k_1 + \frac{b_{\max}}{\mu}\log\left(\frac{L_g^2 (b_{\max}-C_b)}{\mu \epsilon_z^2}\right).
\]
Replacing $K$ with $\bar{K}$ in \eqref{equ:distzK}, we have 
\[
\|\nabla_y g(x_t,z_t^{\bar{K}})\|^2 \le L_{g}^2\dist(z_t^{\bar{K}}, Y^*(x_t))^2 \le e^{-\frac{\mu(\bar{K} - k_1)}{b_{\max}}}\frac{L_g^2(b_{\max} - C_b)}{\mu}\le \epsilon_z^2.
\]
The upper bound for $K_t$ is proved.

\noindent
\textbf{We then show that $N_t \le \bar{N}$ for all $0 \le t \le T$.}

\noindent
\textbf{If $n_1$ in Proposition \ref{prop:TKN} does not exist}, it holds that $c_{N_t} \le C_c$. Similar to \eqref{equ:Kt1}, we have
\[
N_t \le \frac{\log(C_c^2/c_0^2)}{\log(1 + \epsilon_y^2/C_c^2)}.
\]
\textbf{If $n_1$ in Proposition \ref{prop:TKN} exists}, then we have $c_{n_1} \le C_c$ and $c_{n_1+1} > C_c$. Similar to \eqref{equ:Kt1}, we also have $n_1 \le \frac{\log(C_c^2/ c_0^2)}{\log(1+\epsilon_y/C_c^2)}$.

Since $\sigma = \epsilon \le 1$, the Lipschitz constant of the gradient of $g_{\sigma} = \sigma f + g$ is upper bounded by $L_f + L_g$. Then, similar to \eqref{equ:Lipg}, we have
\begingroup
\allowdisplaybreaks
\begin{align}
\label{equ:Lipgsigma}
g_{\sigma}(x_t,y_t^{n_1}) \le & g_{\sigma}(x_t,y_t^{n_1 - 1}) + \langle \nabla_y g_{\sigma}(x_t,y_t^{n_1 - 1}), y_t^{n_1} - y_t^{n_1 - 1}\rangle + \frac{L_{\sigma}}{2} \|y_t^{n_1} - y_t^{n_1 - 1}\|^2\notag\\
\le& g_{\sigma}(x_t,y_t^{n_1 - 1}) + \frac{L_{\sigma}}{2 c_{n_1}^2}\|\nabla_y g_{\sigma}(x_t,y_t^{n_1 - 1})\|^2 \notag\\
\le & g_{\sigma}(x_t,y_t^{0}) + \frac{L_{\sigma}}{2}\sum_{i = 0}^{n_1 - 1}\frac{\|\nabla_y g_{\sigma}(x_t,y_t^{i})\|^2}{c_{i+1}^2}\notag\\
\le& g_{\sigma}(x_t,y_t^{0}) + \frac{L_{\sigma}}{2}\sum_{i = 0}^{n_1 - 1}\frac{\|\nabla_y g_{\sigma}(x_t,y_t^{i})\|^2 / c_0^2}{\sum_{l = 0}^i \|\nabla_y g_{\sigma}(x_t,y_t^{l})\|^2 / c_0^2 + 1}\notag\\
\le & g_{\sigma}(x_t,y_t^{0}) + \frac{L_{\sigma}}{2}\left(1 + \log\left(1 + \sum_{i = 0}^{n_1 - 1}\frac{\|\nabla_y g_{\sigma}(x_t,y_t^{i})\|^2}{c_0^2}\right)\right)\notag\\
\le & g_{\sigma}(x_t,y_{t-1}^{N_{t-1}}) + \frac{L_{\sigma}}{2}\left(1 + \log\frac{C_c^2}{c_0^2}\right),
\end{align}
\endgroup
where the fourth inequality follows from the definition of $c_i$, the fifth inequality follows from Lemma \ref{lem:sum}, and the last inequality follows from the setting that $y_t^0 = y_{t-1}^{N_{t-1}}$.

Therefore, for any $y_{\sigma}^*(x_t) \in Y_{\sigma}^*(x_t)$, we have
\begin{equation}
\label{equ:Lipg2sigma}
g_{\sigma}(x_t,y_t^{n_1}) - g_{\sigma}(x_t,y_{\sigma}^*(x_t)) \overset{\eqref{equ:Lipgsigma}}{\le} g_{\sigma}(x_t,y_{t-1}^{N_{t-1}}) - g_{\sigma}(x_t,y_{\sigma}^*(x_t)) + \frac{L_{\sigma}}{2}\left(1 + \log\frac{C_c^2}{c_0^2}\right)
\end{equation}
For the first term in the right hand of \eqref{equ:Lipg2sigma}, by Assumption \ref{ass:basic} and Young's inequality, we have
\begingroup
\allowdisplaybreaks
\begin{align}
\label{equ:Lipg2sigmafirst}
g_{\sigma}(x_t,y_{t-1}^{N_{t-1}}) - g_{\sigma}(x_t,y_{\sigma}^*(x_t)) \le & \frac{1}{2\mu}\|\nabla_y g_{\sigma}(x_t,y_{t-1}^{N_{t-1}})\|^2 \le \frac{L_{\sigma}^2}{2\mu}\dist(y_{t-1}^{N_{t-1}}, Y_{\sigma}^*(x_t))^2\notag\\
\le & \frac{L_{\sigma}^2}{\mu}\dist(y_{t-1}^{N_{t-1}}, Y_{\sigma}^*(x_{t-1}))^2 + \frac{L_{\sigma}^2}{\mu}\dist(Y_{\sigma}^*(x_{t-1}), Y_{\sigma}^*(x_{t}))^2\notag\\
\le & \frac{L_{\sigma}^2}{\mu^3}\epsilon_y^2 + \frac{L_{\sigma}^2}{\mu^3}\|x_{t-1} - x_{t}\|^2 \le \frac{L_{\sigma}^2}{\mu^3}\epsilon_y^2 + \frac{L_{\sigma}^2}{\mu^3 a_0^2}C_{\varphi}^2,
\end{align}
\endgroup
where the fourth inequality follows from Lemmas \ref{lem:pointdist} and \ref{lem:Ylips}, and the last inequality follows from the fact that $a_t \ge a_0$.

Therefore, from \eqref{equ:Lipg2sigma}, we have
\begingroup
\allowdisplaybreaks
\begin{align}
\label{equ:gyn1upper}
g_{\sigma}(x_t,y_t^{n_1}) - g_{\sigma}(x_t,y_{\sigma}^*(x_t)) \le& g_{\sigma}(x_t,y_{t-1}^{N_{t-1}}) - g_{\sigma}(x_t,y_{\sigma}^*(x_t)) + \frac{L_{\sigma}}{2}\left(1 + \log\frac{C_c^2}{c_0^2}\right)\notag\\
\overset{\eqref{equ:Lipg2sigmafirst}}{\le} & \frac{L_{\sigma}^2}{\mu^3}\epsilon_y^2 + \frac{L_{\sigma}^2}{\mu^3 a_0^2}C_{\varphi}^2 + \frac{L_{\sigma}}{2}\left(1 + \log\frac{C_c^2}{c_0^2}\right).
\end{align}
\endgroup
For all $N > n_1$ and $y_{\sigma}^*(x_t) \in Y_{\sigma}^*(x_t)$, we have
\begingroup
\allowdisplaybreaks
\begin{align}
\label{equ:gyN}
g_{\sigma}(x_t,y_t^{N}) \le & g_{\sigma}(x_t,y_t^{N-1}) + \langle \nabla_y g_{\sigma}(x_t,y_t^{N - 1}), y_t^{N} - y_t^{N - 1}\rangle + \frac{L_{\sigma}}{2} \|y_t^{N} - y_t^{N - 1}\|^2\notag\\
\le& g_{\sigma}(x_t,y_t^{N - 1}) + \langle \nabla_y g_{\sigma}(x_t,y_t^{N - 1}), y_t^{N} - y_t^{N - 1}\rangle + \frac{L_{\sigma}}{2} \|y_t^{N} - y_t^{N - 1}\|^2\notag\\
\le & g_{\sigma}(x_t,y_t^{N - 1}) - \frac{1}{c_N}\left(1 - \frac{L_{\sigma}}{2c_N}\right)\|\nabla_y g_{\sigma}(x_t,y_t^{N - 1})\|^2 \notag\\
\le& g_{\sigma}(x_t,y_t^{N - 1}) - \frac{1}{2 c_{N}}\|\nabla_y g_{\sigma}(x_t,y_t^{N - 1})\|^2 \notag\\
\le& g_{\sigma}(x_t,y_t^{N - 1}) + \frac{\mu}{c_{N}}(g_{\sigma}(x_t,y_{\sigma}^*(x_t)) - g_{\sigma}(x_t,y_t^{N - 1})),
\end{align}
\endgroup
where the fourth inequality follows from the fact that $c_N > C_c\ge L_{\sigma}$ (cf. \eqref{equ:abc}) and the last inequality follows from the $\mu$-P{\L} condition of $g_{\sigma}$.

Therefore, for any $y_{\sigma}^*(x_t) \in Y_{\sigma}^*(x_t)$, we have
\begingroup
\allowdisplaybreaks
\begin{align}
\label{equ:gyN2}
g_{\sigma}(x_t,y_t^{N}) - g_{\sigma}(x_t,y_{\sigma}^*(x_t))
\le& g_{\sigma}(x_t,y_t^{N - 1})  - g_{\sigma}(x_t,y_{\sigma}^*(x_t)) + \frac{\mu}{c_{N}}(g_{\sigma}(x_t,y_t^{N - 1}) - g_{\sigma}(x_t,y_{\sigma}^*(x_t)))\notag\\
\le & \left(1 - \frac{\mu}{c_{N}}\right)(g_{\sigma}(x_t,y_t^{N-1}) - g_{\sigma}(x_t,y_{\sigma}^*(x_t)))\notag\\
\le & \left(1 - \frac{\mu}{c_{N}}\right)^{N - n_1}(g_{\sigma}(x_t,y_t^{n_1}) - g_{\sigma}(x_t,y_{\sigma}^*(x_t)))\notag\\
\le & e^{-\frac{\mu(N - n_1)}{c_N}}(g_{\sigma}(x_t,y_t^{n_1}) - g_{\sigma}(x_t,y_{\sigma}^*(x_t)))\notag\\
\overset{\eqref{equ:gyn1upper}}{\le} & e^{-\frac{\mu(N - n_1)}{c_N}}\left(\frac{L_{\sigma}^2}{\mu^3}\epsilon_y^2 + \frac{L_{\sigma}^2}{\mu^3 a_0^2}C_{\varphi}^2 + \frac{L_{\sigma}}{2}\left(1 + \log\frac{C_c^2}{c_0^2}\right)\right),
\end{align}
\endgroup
where the third inequality follows from the fact that $c_n \le c_N$ for all $n_1 \le n < N$.

By the update mode of $c_{n}$, it holds that 
\begin{equation}
\label{equ:cN}
c_{N} = c_{N-1} + \frac{\|\nabla_y g_{\sigma}(x_t, y_t^{N-1})\|^2}{c_{n}+c_{N-1}} \le c_{n_1} + \sum_{n=n_1}^{N-1}\frac{\|\nabla_y g_{\sigma}(x_t, y_t^{n})\|^2}{c_{n+1}}.
\end{equation}
Therefore, to establish an upper bound for $C_{N}$, it suffices to bound the final term on the right-hand side of \eqref{equ:gyN}. First, using the fourth inequality in \eqref{equ:gzK}, we obtain
\begingroup
\allowdisplaybreaks
\begin{align}
g_{\sigma}(x_t,y_t^{N}) - g_{\sigma}(x_t,y_{\sigma}^*(x_t))
\le& g_{\sigma}(x_t,y_t^{N - 1}) - g_{\sigma}(x_t,y_{\sigma}^*(x_t)) - \frac{1}{2 c_{N}}\|\nabla_y g_{\sigma}(x_t,y_t^{N - 1})\|^2 \notag\\
\le & g_{\sigma}(x_t,y_t^{n_1}) - g_{\sigma}(x_t,y_{\sigma}^*(x_t)) - \sum_{n = n_1}^{N-1} \frac{\|\nabla_y g_{\sigma}(x_t,y_t^{N - 1})\|^2}{2 c_{n}},\notag
\end{align}
\endgroup
which implies that
\begingroup
\allowdisplaybreaks
\begin{align}
\sum_{n = n_1}^{N-1} \frac{\|\nabla_y g_{\sigma}(x_t,y_t^{N - 1})\|^2}{c_{n}} \le & 2(g_{\sigma}(x_t,y_t^{n_1}) - g_{\sigma}(x_t,y_{\sigma}^*(x_t))) - 2(g_{\sigma}(x_t,y_t^{N}) - g_{\sigma}(x_t,y_{\sigma}^*(x_t)))\notag\\
\le & 2(g_{\sigma}(x_t,y_t^{n_1}) - g_{\sigma}(x_t,y_{\sigma}^*(x_t))),\notag
\end{align}
\endgroup
where the last inequality the fact that $g_{\sigma}(x_t,y_t^{N}) - g_{\sigma}(x_t,y_{\sigma}^*(x_t)) \ge 0$.

Plugging this into \eqref{equ:cN}, it holds that
\begin{equation}
\label{equ:cN2}
c_N \le c_{n_1} + 2(g_{\sigma}(x_t,y_t^{n_1}) - g_{\sigma}(x_t,y_{\sigma}^*(x_t))) \overset{\eqref{equ:gyn1upper}}{\le} C_c + 2\left(\frac{L_{\sigma}^2}{\mu^3}\epsilon_y^2 + \frac{L_{\sigma}^2}{\mu^3 a_0^2}C_{\varphi}^2 + \frac{L_{\sigma}}{2}\left(1 + \log\frac{C_c^2}{c_0^2}\right)\right):=c_{\max}.
\end{equation}
Then, plugging \eqref{equ:cN2} into \eqref{equ:gyN2}, we have
\[
g_{\sigma}(x_t,y_t^{N}) - g_{\sigma}(x_t,y_{\sigma}^*(x_t)) \le e^{-\frac{\mu(N - n_1)}{c_{\max}}}\left(\frac{L_{\sigma}^2}{\mu^3}\epsilon_y^2 + \frac{L_{\sigma}^2}{\mu^3 a_0^2}C_{\varphi}^2 + \frac{L_{\sigma}}{2}\left(1 + \log\frac{C_c^2}{c_0^2}\right)\right) = e^{-\frac{\mu(N - n_1)}{c_{\max}}}\left(\frac{c_{\max} - C_c}{2}\right).
\]
Then, by Lemma \ref{lem:PL}, we have
\begin{equation}
\label{equ:distyN}
\dist(y_t^{N}, Y_{\sigma}^*(x_t))^2 \le \frac{2}{\mu}e^{-\frac{\mu(N - n_1)}{c_{\max}}}\left(\frac{c_{\max} - C_c}{2}\right) = e^{-\frac{\mu(N - n_1)}{c_{\max}}} \frac{c_{\max} - C_c}{\mu}.
\end{equation}
Let
\[
\bar{N} := n_1 + \frac{c_{\max}}{\mu}\log\left(\frac{L_{\sigma}^2 (c_{\max}-C_c)}{\mu \epsilon_y^2}\right).
\]
Replacing $N$ with $\bar{N}$ in \eqref{equ:distyN}, by Assumption \ref{ass:basic}, we have 
\begin{equation*}
\|\nabla_y g_{\sigma}(x_t,y_t^{\bar{N}})\|^2 \le L_{\sigma}^2\dist(y_t^{\bar{N}}, Y_{\sigma}^*(x_t))^2 \le e^{-\frac{\mu(\bar{N} - n_1)}{c_{\max}}}\frac{L_{\sigma}^2(c_{\max} - C_c)}{\mu}\le \epsilon_y^2.
\end{equation*}
The upper bound for $N_t$ is proved. We complete the proof.
\end{proof}

\subsection{Proof of Lemma \ref{lem:linear:ACGM}}
Denote
\begin{equation}
\label{equ:SbarS}
\mathcal{S} := \left\{k \ge 0 \mid \beta \gamma_{k} \ge L_{k} \right\},~ \overline{\mathcal{S}} := \{0, 1, \dots\} \setminus \mathcal{S},
\end{equation}
where $\beta := \frac{\alpha+1}{2} > 1$, $\gamma_{k+1}$ and $L_{k+1}$ are defined in Algorithm \ref{alg:AC-GM}.

Before proving Lemma \ref{lem:linear:ACGM}, we first restate the smooth version of Lemma 2.1 in \cite{yagishita2025simple} and its proof as follows.
\begin{lemma}
\label{lem:yagi}
Given a $\mu_h$-P{\L} function $h :\sR^d \rightarrow \sR$ with $L_h$-Lipschitz gradients, the sequence $\{x^k\}$ generated by Algorithm \ref{alg:AC-GM} satisfies
\begin{equation}
\label{equ:lem:yagi}
\frac{\alpha-1}{4\alpha^2}\sum_{i = 0}^{k+1}\frac{1}{\gamma_{k+1}}\|\nabla h(x^{i})\|^2 \le h(x^0) - h(x^{k+1}) + \sum_{i \in [{k+1}] \cap \bar{\mathcal{S}}}\frac{\gamma_{i+2}-\gamma_{i+1}}{2}\|x^{i+1}-x^{i}\|^2.
\end{equation}
Furthermore, the number of the elements in the set $\bar{\mathcal{S}}$ satisfy 
\begin{equation}
\label{equ:lem:yagim}
|\bar{\mathcal{S}}| \le \left\lceil \log_\beta\frac{\max\left\{L_0,L_h\right\}}{L_0} \right\rceil_+ := m_h,
\end{equation}
where $\lceil a \rceil_{+}$ represents the smallest nonnegative integer greater than or equal to $a$.
\end{lemma}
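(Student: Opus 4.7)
The plan is to prove (\ref{equ:lem:yagi}) through a per-iteration descent inequality that telescopes, and to bound $|\bar{\mathcal{S}}|$ by a geometric-growth argument on the adaptive scale $\gamma_k$ combined with the ceiling imposed by $L_h$-smoothness.

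First, I would turn the definition of $L_{k+1}$ in Line \ref{alg:AC-GMLk} into the identity
\[
h(x^{k+1}) = h(x^{k}) + \langle \nabla h(x^{k}), x^{k+1} - x^{k}\rangle + \tfrac{L_{k+1}}{2}\|x^{k+1}-x^{k}\|^2,
\]
and substitute the update $x^{k+1}-x^{k} = -\frac{1}{\alpha\gamma_{k+1}}\nabla h(x^k)$ to obtain
\[
h(x^{k}) - h(x^{k+1}) = \frac{1}{\alpha\gamma_{k+1}}\|\nabla h(x^{k})\|^2 - \frac{L_{k+1}}{2\alpha^2\gamma_{k+1}^2}\|\nabla h(x^{k})\|^2.
\]
This reduces the analysis to controlling the second term according to whether $k+1 \in \mathcal{S}$ or $k+1 \in \bar{\mathcal{S}}$.

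I would then split cases. If $k+1 \in \mathcal{S}$, then $L_{k+1} \le \beta\gamma_{k+1}$ with $\beta = (\alpha+1)/2$, so the second term is bounded by $\tfrac{\alpha+1}{4\alpha^2\gamma_{k+1}}\|\nabla h(x^k)\|^2$, yielding the pure descent
\[
h(x^{k}) - h(x^{k+1}) \ge \tfrac{3\alpha-1}{4\alpha^2\gamma_{k+1}}\|\nabla h(x^k)\|^2 \ge \tfrac{\alpha-1}{4\alpha^2\gamma_{k+1}}\|\nabla h(x^k)\|^2.
\]
If $k+1 \in \bar{\mathcal{S}}$, then $L_{k+1} > \beta\gamma_{k+1} \ge \gamma_{k+1}$, and since $\gamma_{k+2} = \max\{L_0,\dots,L_{k+1}\}$ we must have $\gamma_{k+2} = L_{k+1}$. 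Adding the correction $\tfrac{\gamma_{k+2}-\gamma_{k+1}}{2}\|x^{k+1}-x^k\|^2$ to both sides of the identity above, using $\|x^{k+1}-x^k\|^2 = \tfrac{1}{\alpha^2\gamma_{k+1}^2}\|\nabla h(x^k)\|^2$ and the equality $\gamma_{k+2}=L_{k+1}$, the $L_{k+1}$ terms collapse and I get
\[
h(x^{k}) - h(x^{k+1}) + \tfrac{\gamma_{k+2}-\gamma_{k+1}}{2}\|x^{k+1}-x^k\|^2 = \tfrac{2\alpha-1}{2\alpha^2\gamma_{k+1}}\|\nabla h(x^k)\|^2 \ge \tfrac{\alpha-1}{4\alpha^2\gamma_{k+1}}\|\nabla h(x^k)\|^2,
\]
where the last step uses $\alpha > 1$ (which gives $2(2\alpha-1) \ge \alpha-1$). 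Summing the per-iteration inequalities over the two cases telescopes the energy to $h(x^0) - h(x^{k+1})$ and collects the corrections only on $\bar{\mathcal{S}}$, producing (\ref{equ:lem:yagi}).

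For (\ref{equ:lem:yagim}), I would note that the standard descent lemma from $L_h$-smoothness gives $|h(x^{j}) - h(x^{j-1}) - \langle \nabla h(x^{j-1}), x^{j}-x^{j-1}\rangle| \le \tfrac{L_h}{2}\|x^{j}-x^{j-1}\|^2$ and hence $L_{j} \le L_h$ for every $j\ge 1$, so $\gamma_k \le \max\{L_0,L_h\}$ for all $k$. On the other hand, from Step 2 every index $k+1 \in \bar{\mathcal{S}}$ satisfies $\gamma_{k+2} > \beta\gamma_{k+1}$. Since $\gamma$ is nondecreasing and $\gamma_1 = L_0$, iterating over the bad indices yields $\beta^{|\bar{\mathcal{S}}|}L_0 \le \max\{L_0,L_h\}$, and taking $\log_\beta$ gives (\ref{equ:lem:yagim}). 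The main obstacle is the bad-iteration accounting: a priori the descent can fail whenever $L_{k+1}$ overshoots $\beta\gamma_{k+1}$, and one must exhibit an exact cancellation between the harmful quadratic $\tfrac{L_{k+1}}{2\alpha^2\gamma_{k+1}^2}\|\nabla h(x^k)\|^2$ and the correction term $\tfrac{\gamma_{k+2}-\gamma_{k+1}}{2}\|x^{k+1}-x^k\|^2$. Recognizing the identity $\gamma_{k+2} = L_{k+1}$ on $\bar{\mathcal{S}}$ is the conceptual crux that simultaneously drives the telescoping in Step 2 and the geometric-growth argument for (\ref{equ:lem:yagim}).
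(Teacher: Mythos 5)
Your proof is correct and follows essentially the same route as the paper's: the one-step identity from the definition of $L_{k+1}$, the split into good and bad iterations according to whether $L_{k+1}$ exceeds $\beta\gamma_{k+1}$, the key observation that $\gamma_{k+2}=L_{k+1}$ on bad iterations so the overshoot is exactly absorbed by the correction term, and the geometric-growth argument $\beta^{|\bar{\mathcal{S}}|}L_0 \le \max\{L_0,L_h\}$ for the cardinality bound. The only cosmetic differences are that you keep the exact identity (obtaining the sharper per-step constants $\tfrac{3\alpha-1}{4\alpha^2}$ and $\tfrac{2\alpha-1}{2\alpha^2}$ before relaxing to $\tfrac{\alpha-1}{4\alpha^2}$, where the paper discards a term earlier) and that you index the case split by $k+1\in\mathcal{S}$ versus the paper's $k\in\mathcal{S}$, an off-by-one ambiguity already present between the paper's definition \eqref{equ:SbarS} and its own usage.
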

\begin{proof}
By the update mode of $x^{k+1}$, it holds that
\[
\langle \nabla h(x^{k}), x^{k+1} - x^{k}\rangle + \frac{\alpha\gamma_{k+1}}{2}\|x^{k+1} - x^{k}\|^2 = -\frac{1}{\alpha\gamma_{k+1}}\|\nabla h(x^{k})\|^2 + \frac{1}{2\alpha\gamma_{k+1}}\|\nabla h(x^{k})\|^2 \le 0.
\]
Then, by the definition of $L_{k+1}$ in Line \ref{alg:AC-GMLk} of Algorithm \ref{alg:AC-GM}, we have
\begin{equation}
\label{equ:lem:yagi1}
\frac{\alpha\gamma_{k+1} - L_{k+1}}{2}\|x^{k+1} - x^{k}\|^2 + h(x^{k+1}) - h(x^{k}) \le 0.
\end{equation}
If $k \in \mathcal{S}$, it holds that $\beta\gamma_{k+1} - L_{k+1} \ge 0$ by the definition of $\mathcal{S}$ in \eqref{equ:SbarS}. Then, by \eqref{equ:lem:yagi1}, we have
\begingroup
\allowdisplaybreaks
\begin{align}
\label{equ:lem:yagi2}
h(x^{k}) - h(x^{k+1}) \overset{\eqref{equ:lem:yagi1}}{\ge}& \frac{\alpha\gamma_{k+1} - L_{k+1}}{2}\|x^{k+1} - x^{k}\|^2 \ge \frac{\alpha\gamma_{k+1} - \frac{\alpha + 1}{2}\gamma_{k+1}}{2}\|x^{k+1} - x^{k}\|^2\notag\\
\ge& \frac{\alpha-1}{4}\gamma_{k+1}\|x^{k+1} - x^{k}\|^2 = \frac{\alpha-1}{4\alpha^2\gamma_{k+1}}\|\nabla h(x^{k})\|^2,
\end{align}
\endgroup
where the last equality follows from the update mode of $x_{k+1}$.

On the other hand, if $k \in \bar{\mathcal{S}}$, it holds that $\beta\gamma_{k+1} \le L_{k+1}$, i.e., $\gamma_{k+1} \le L_{k+1}$, and therefore, we have $\gamma_{k+2} = \max\{L_0,\dots,L_{k+1}\} = L_{k+1}$. Then, by the update mode of $x_{k+1}$, we have
\begingroup
\allowdisplaybreaks
\begin{align}
\label{equ:lem:yagi3}
\frac{\alpha-1}{4\alpha^2\gamma_{k+1}}\|\nabla h(x^{k})\|^2\notag \le &\frac{\alpha-1}{2\alpha^2\gamma_{k+1}}\|\nabla h(x^{k})\|^2 = \frac{\alpha-1}{2}\gamma_{k+1}\|x^{k+1} - x^{k}\|^2 \notag\\
=& \frac{\alpha \gamma_{k+1} - L_{k+1}}{2}\|x^{k+1} - x^{k}\|^2 + \frac{L_{k+1} - \gamma_{k+1}}{2}\|x^{k+1} - x^{k}\|^2 \notag\\
\overset{\eqref{equ:lem:yagi1}}{\le}& h(x^{k}) - h(x^{k+1}) + \frac{\gamma_{k+2} - \gamma_{k+1}}{2}\|x^{k+1} - x^{k}\|^2.
\end{align}
\endgroup
Summing \eqref{equ:lem:yagi2} and \eqref{equ:lem:yagi3} from $0$ to $k+1$, we have
\begin{equation*}
\frac{\alpha-1}{4\alpha^2}\sum_{i = 0}^{k+1}\frac{1}{\gamma_{k+1}}\|\nabla h(x^{i})\|^2 \le h(x^0) - h(x^{k+1}) + \sum_{i \in [{k+1}] \cap \bar{\mathcal{S}}}\frac{\gamma_{i+2}-\gamma_{i+1}}{2}\|x^{i+1}-x^{i}\|^2.
\end{equation*}
The proof of \eqref{equ:lem:yagi} is complete.

Furthermore, by the definition of $\gamma_{k+1}$, it holds that $L_0 \le \gamma_{k+1} \le \max\{L_0,L_h\}$. Therefore, if $k \in \bar{\mathcal{S}}$, we have $\beta \gamma_{k+1} \le L_{k+1} \le \max\{L_0,L_h\}$. 

Define $\bar{\mathcal{S}} = \{k_1,\cdots,k_m\}$. Then, we have
\[
\max\{L_0,L_h\} \ge \gamma_{k_m + 1} \ge \beta \gamma_{k_m} \ge \beta \gamma_{k_{m - 1} + 1} \ge \beta^m L_0,
\]
which demonstrates that $m \le m_h$. We complete the proof.
\end{proof}

Then, under Lemma \ref{lem:yagi}, we can give the proof of Lemma \ref{lem:linear:ACGM}.
\begin{proof}
If $k\in\mathcal{S}$, by \eqref{equ:lem:yagi2} and the fact that $\gamma_{k+1} \le \max\{L_0,L_h\}$, we have 
\begin{equation}
\label{equ:linear}
\frac{\alpha-1}{4\alpha^2 \max\{L_0, L_h\}} \|\nabla h(x^k)\|^2 \le \frac{\alpha-1}{4\alpha^2\gamma_{k+1}}\|\nabla h(x^{k})\|^2 \le h(x^{k}) -h(x^{k+1}) = (h(x^{k}) - h^*)-(h(x^{k+1})-h^*). 
\end{equation}
Rearranging \eqref{equ:linear} and using the $\mu_h$-P{\L} condition \eqref{equ:def:PL} of $h$, it holds that
\begin{equation*}
\frac{\mu_h(\alpha -1)}{2\alpha^2 \max\{L_0,L_h\}}(h(x^{k})-h^*)\le (h(x^{k}) - h^*)-(h(x^{k+1})-h^*),
\end{equation*}
which is equivalent to
\begin{equation}
\label{equ:upperS}
h(x^{k+1}) - h^* \le \left(1-\frac{\mu_h(\alpha -1)}{2\alpha^2 \max\{L_0,L_h\}}\right)\left(h(x^{k}) - h^*\right) := \left(1 -p\right)\left(h(x^{k}) - h^*\right). 
\end{equation}
Here, Lemma \ref{lem:PL} implies $\mu_h \le L_h \le \max\{L_0,L_h\}$. Therefore, we have $0< 1-p <1$ by the fact that $\alpha > 1$. 

If $k \in \bar{\mathcal{S}}$, it holds that $\gamma_{k+2} = \max \{L_0, \dots, L_{k+1}\}= L_{k+1}$. By \eqref{equ:lem:yagi3}, we have 
\begin{equation*}
\left(\frac{\alpha -1}{2\alpha^2 \gamma_{k+1}} - \frac{\gamma_{k+2}-\gamma_{k+1}}{2\alpha^2\gamma_{k+1}^2}\right)\|\nabla h(x^{k})\|^2 = \frac{\alpha -1}{2\alpha^2 \gamma_{k+1}}\|\nabla h(x^{k})\|^2 - \frac{\gamma_{k+2}-\gamma_{k+1}}{2}\|x^{k+1} - x^{k}\|^2 \le h(x^{k})-h(x^{k+1}),
\end{equation*}
which is equivalent to
\begin{equation}
\label{equ:kinbarS}
\frac{\alpha \gamma_{k+1} -\gamma_{k+2}}{2\alpha^2 \gamma_{k+1}^2}\|\nabla h(x^{k})\|^2 \le h(x^{k}) -h(x^{k+1}) = h(x^{k}) - h^* - (h(x^{k+1}) - h^*).
\end{equation}
If $\alpha \gamma_{k+1} -\gamma_{k+2} < 0$, by Lemma \ref{lem:PL}, we have
\begin{equation}
\label{equ:PLprop}
\frac{\alpha \gamma_{k+1} -\gamma_{k+2}}{2\alpha^2 \gamma_{k+1}^2}\|\nabla h(x^{k})\|^2 \ge \frac{L_h^2(\alpha \gamma_{k+1} -\gamma_{k+2})}{2\alpha^2 \gamma_{k+1}^2}\|x^{k}-x^*\|^2 \ge \frac{L_h^2(\alpha \gamma_{k+1} -\gamma_{k+2})}{\mu_h\alpha^2 \gamma_{k+1}^2}\left(h(x^{k})-h^*\right).
\end{equation}
Substituting \eqref{equ:PLprop} into \eqref{equ:kinbarS}, we have
\begin{equation*}
\frac{L_h^2(\alpha\gamma_{k+1}-\gamma_{k+2})}{\mu_h\alpha^2\gamma_{k+1}^2}\left(h(x^{k})-h^*\right) \le h(x^{k}) - h^* - (h(x^{k+1}) - h^*),
\end{equation*}
which is equivalent to
\begin{equation}
\label{equ:kinbarS2}
h(x^{k+1})-h^* \le \left(1-\frac{L_h^2(\alpha\gamma_{k+1} -\gamma_{k+2})}{\mu_h\alpha^2\gamma_{k + 1}^2}\right)(h(x^{k})-h^*) \le \left(1+\frac{L_h^2\gamma_{k+2}}{\mu_h \alpha^2 \gamma_{k+1}^2}\right) (h(x^{k})-h^*).
\end{equation}
If $\alpha \gamma_{k+1} -\gamma_{k+2}\ge0$, plug the P{\L} condition \eqref{equ:def:PL} into \eqref{equ:kinbarS}, we have
\begin{equation*}
\frac{\mu_h(\alpha \gamma_{k+1} -\gamma_{k+2})}{\alpha^2\gamma_{k+1}^2}\left(h(x^{k})-h^*\right) \le \frac{\alpha \gamma_{k+1} -\gamma_{k+2}}{2\alpha^2 \gamma_{k+1}^2}\|\nabla h(x^{k})\|^2 \le h(x^{k}) - h^* - (h(x^{k+1}) - h^*),
\end{equation*}
which is equivalent to
\begin{equation}
\label{equ:kinbarS3}
h(x^{k+1})-h^*\le \left(1-\frac{\mu_h(\alpha \gamma_{k+1} -\gamma_{k+2})}{\alpha^2\gamma_{k+1}^2}\right)\left(h(x^{k})-h^*\right) \le \left(1+\frac{\mu_h\gamma_{k+2}}{\alpha^2\gamma_{k+1}^2}\right)\left(h(x^{k})-h^*\right)
\end{equation}
Therefore, combining \eqref{equ:kinbarS2} and \eqref{equ:kinbarS3}, we have the following upper bound of $h(x^{k+1})-h^*$ for all $k\in \bar{\mathcal{S}}$:
\begingroup
\allowdisplaybreaks
\begin{align}
\label{equ:kinbarS4}
h(x^{k+1})-h^*\le& \left(1+\max \left(\frac{L_h^2\gamma_{k+2}}{\mu_h \alpha^2 \gamma_{k+1}^2},\frac{\mu_h\gamma_{k+2}}{\alpha^2\gamma_{k+1}^2}\right)\right)\left(h(x^{k})-h^*\right)\notag\\
\le& \left(1 + \max\left\{\frac{L_h^3}{\mu_h \alpha^2 L_0^2},\frac{L_h^2}{\mu_h \alpha^2 L_0},\frac{\mu_h L_h}{\alpha^2 L_0^2},\frac{\mu_h}{\alpha^2 L_0}\right\}\right)\left(h(x^{k})-h^*\right)\notag\\
:=& \left(1 + \bar{C}\right)\left(h(x^{k})-h^*\right),
\end{align}
\endgroup
where the second inequality follows from the fact that $L_0 \le \gamma_{k+1} \le \max\{L_0,L_h\}$ for all $k$.

Let $i$ denote the number of indices in the first $k+1$ iterations that belong to the set $\bar{\mathcal{S}}$. By \eqref{equ:upperS}, \eqref{equ:kinbarS4}, and the definition of $m_h$ in \eqref{equ:lem:yagim}, we have
\begingroup
\allowdisplaybreaks
\begin{align}
h(x^{k+1})-h^*\le& \left(1 + \bar{C}\right)^i\left(1 - p\right)^{k+1-i}\left(h(x^{0}) - h^*\right)\notag\\
\le& \left(1 + \bar{C}\right)^{m_h} \left(1 - p\right)^{k + 1 - m_h}\left(h(x^{0})-h^*\right)\notag\\
= & \frac{(1 + \bar{C})^{m_h}}{(1 - p)^{m_h}} \left(1 - p\right)^{k+1}\left(h(x^{0}) - h^*\right)\notag\\
:= & C_h\left(1 - p\right)^{k+1}\left(h(x^{0}) - h^*\right),\notag
\end{align}
\endgroup
where the first inequality follows from the fact that $h(x^{j+1})-h^*\le h(x^{j})-h^*$ for all $j \in \mathcal{S}$, and the second inequality follows from $\bar{C} > 0$, $ 0 < p < 1$, and $m_h \ge i$.

Moreover, by taking the derivative of $p$ w.r.t. $\alpha$, the fastest convergence rate is achieved when $\alpha = 2$, which gives $p = \frac{\mu_h}{8 \max\{L_0,L_h\}}$. The proof is complete.
\end{proof}

\subsection{Proof of Proposition \ref{prop:KtNt2}}
\begin{proof}
\textbf{We first establish the upper bound of $K_t$.} Denote
\[
p_g := \frac{\mu(\alpha - 1)}{2\alpha^2 \max\{L_{0,1},L_g\}},~~\bar{C}_g := \max\left\{\frac{L_g^3}{\mu \alpha^2 L_{0,1}^2},\frac{L_g^2}{\mu \alpha^2 L_{0,1}},\frac{\mu L_g}{\alpha^2 L_{0,1}^2},\frac{\mu}{\alpha^2 L_{0,1}}\right\},~~\text{and}~~C_g:=\frac{(1 + \bar{C}_g)^{m_g}}{(1 - p_g)^{m_g}},
\]
where $m_g := \left\lceil \log_\beta\frac{\max\left\{L_{0,1},L_g\right\}}{L_{0,1}} \right\rceil_+$ and $\beta = \frac{\alpha + 1}{2}$.

By Lemma \ref{lem:linear:ACGM}, for any $y^*(x_t) \in Y^*(x_t)$, it holds that
\begingroup
\allowdisplaybreaks
\begin{align}
\label{equ:linearlower}
g(x_t, z_t^{K}) - g(x_t, y^*(x_t)) \le& C_g(1 - p_g)^K (g(x_t, z_t^{0}) - g(x_t, y^*(x_t)))\notag\\
= & C_g(1 - p_g)^K (g(x_t, z_{t - 1}^{K_{t-1}}) - g(x_t, y^*(x_t)))\notag\\
\le & C_g(1 - p_g)^K \left(\frac{L_g^2}{\mu^3}\epsilon_z^2 + \frac{L_g^4}{\mu^3 a_0^2}C_{\varphi}^2\right),
\end{align}
\endgroup
where the last inequality follows from \eqref{equ:Lipg2first}.

Then, by Lemma \ref{lem:PL}, it holds that
\begingroup
\allowdisplaybreaks
\begin{align}
\label{equ:barCb}
\|\nabla_y g(x_t,z_t^K)\|^2 \le & L_g^2 \dist(z_t^K, Y^*(x_t))^2\notag\\
\le& \frac{2L_g^2}{\mu}(g(x_t, z_t^{K}) - g(x_t, y^*(x_t)))\notag\\
\overset{\eqref{equ:linearlower}}{\le} & (1 - p_g)^K \frac{2L_g^2}{\mu}C_g\left(\frac{L_g^2}{\mu^3}\epsilon_z^2 + \frac{L_g^4}{\mu^3 a_0^2}C_{\varphi}^2\right)\notag\\
:= & (1 - p_g)^K \bar{C}_b.
\end{align}
\endgroup
Therefore, when
\[
K \ge \frac{\log(\bar{C}_b / \epsilon_z^2)}{\log(1/(1 - p_g))},
\]
we have $\|\nabla_y g(x_t,z_t^K)\|^2 \le \epsilon_z^2$, which demonstrates that $K_t \le \frac{\log(\bar{C}_b / \epsilon_z^2)}{\log(1/(1-p_g))}$.

\textbf{We then establish the upper bound of $K_t$.} Denote 
\[
p_{\sigma} := \frac{\mu(\alpha - 1)}{2\alpha^2 \max\{L_{0,2},L_f + L_g\}},~~\bar{C}_{\sigma} := \max\left\{\frac{(L_f + L_g)^3}{\mu \alpha^2 L_{0,2}^2},\frac{(L_f + L_g)^2}{\mu \alpha^2 L_{0,2}},\frac{\mu (L_f + L_g)}{\alpha^2 L_{0,2}^2},\frac{\mu}{\alpha^2 L_{0,2}}\right\},
\]
and
\begin{equation}
\label{equ:barCc}
C_{\sigma}:=\frac{(1 + \bar{C}_{\sigma})^{m_{\sigma}}}{(1 - p_{\sigma})^{m_{\sigma}}},~~\bar{C}_c := \frac{2(L_f + L_g)^2}{\mu}C_{\sigma}\left(\frac{(L_f + L_g)^2}{\mu^3}\epsilon_y^2 + \frac{(L_f + L_g)^4}{\mu^3 a_0^2}C_{\varphi}^2\right),
\end{equation}
where $m_{\sigma} := \left\lceil \log_\beta\frac{\max\left\{L_{0,2},L_f + L_g\right\}}{L_{0,2}} \right\rceil_+$.

Then, the upper bound for $N_t$ follows from a derivation similar to that for $K_t$ and is therefore omitted.
\end{proof}

\subsection{Improvement on hyper-objective function for one step update}
Similar to Lemma 7 in \cite{yang2025tuning}, we have the following result that concerns the improvement of the hyper-objective function $\varphi$ after one-step update.
\begin{lemma}
\label{lem:varphidescent}
Suppose that Assumptions \ref{ass:basic}, \ref{ass:inf}, and \ref{ass:sigma} hold. Then, we have
\begin{equation}
\label{equ:lem:varphidescent1}
\varphi(x_{t+1}) \le \varphi(x_t) - \frac{1}{2a_{t+1}}\|\nabla \varphi(x_t)\|^2 - \frac{1}{2a_{t+1}}\left(1-\frac{L_{\varphi}}{a_{t+1}}\right)\|\widehat{\nabla}\varphi(x_t, z_t^{K_t}, y_t^{N_t})\|^2 + \frac{\hat{\epsilon}}{2a_{t+1}}. 
\end{equation}
Furthermore, if $t_1$ in Proposition \ref{prop:TKN} exists, then for any $t \ge t_1$, we have
\begin{equation}
\label{equ:lem:varphidescent2}
\varphi(x_{t+1}) 
\le \varphi(x_t) - \frac{1}{2a_{t+1}}\|\nabla \varphi(x_t)\|^2 - \frac{1}{4a_{t+1}}\|\widehat{\nabla}\varphi(x_t, z_t^{K_t}, y_t^{N_t})\|^2 + \frac{\hat{\epsilon}}{2a_{t+1}}, 
\end{equation}
where 
$\hat{\epsilon} := \left(2\bar{C}_{\sigma}^2 + \frac{6 L_f^2}{\mu^2}\epsilon^2 + 12 L_g\right)\epsilon^2$.  
\end{lemma}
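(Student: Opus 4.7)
The plan is to derive the descent inequality from the $L_\varphi$-Lipschitz smoothness of $\nabla\varphi$ (Lemma \ref{lem:gradLip}) applied to the adaptive step $x_{t+1} = x_t - a_{t+1}^{-1}\widehat{\nabla}\varphi(x_t,y_t^{N_t},z_t^{K_t})$, and then to control the bias $\nabla\varphi(x_t) - \widehat{\nabla}\varphi$ using Lemmas \ref{lem:hypererror} and \ref{lem:pointdist} together with the parameter choices $\sigma = \epsilon$ and $\epsilon_y = \epsilon_z = \epsilon^2$ fixed in Line~\ref{alg:AF2BA:sigma}. This is the standard ``biased gradient descent with approximate hypergradient'' template, adapted to the AdaGrad-style step size; the only novelty here is tracking the dependence of the bias on $\epsilon$.

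First I would write down the descent lemma induced by Lipschitz smoothness,
\[
\varphi(x_{t+1}) \le \varphi(x_t) - \frac{1}{a_{t+1}}\langle \nabla\varphi(x_t),\, \widehat{\nabla}\varphi\rangle + \frac{L_\varphi}{2 a_{t+1}^2}\|\widehat{\nabla}\varphi\|^2,
\]
and apply the polarization identity $2\langle u,v\rangle = \|u\|^2 + \|v\|^2 - \|u-v\|^2$ to the inner product. This produces
\[
\varphi(x_{t+1}) \le \varphi(x_t) - \frac{1}{2a_{t+1}}\|\nabla\varphi(x_t)\|^2 - \frac{1}{2a_{t+1}}\Bigl(1 - \frac{L_\varphi}{a_{t+1}}\Bigr)\|\widehat{\nabla}\varphi\|^2 + \frac{1}{2a_{t+1}}\|\nabla\varphi(x_t) - \widehat{\nabla}\varphi\|^2,
\]
which already has the shape of (\ref{equ:lem:varphidescent1}). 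It remains to identify the residual with $\hat\epsilon/(2a_{t+1})$.

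For the residual, I would combine Lemma \ref{lem:hypererror} with the $\dist$-bounds of Lemma \ref{lem:pointdist} to obtain
\[
\|\nabla\varphi(x_t) - \widehat{\nabla}\varphi\| \le \bar{C}_\sigma \sigma + \frac{L_f}{\mu}\epsilon_y + \frac{L_g}{\mu\sigma}\epsilon_y + \frac{L_g}{\mu\sigma}\epsilon_z.
\]
Plugging the algorithmic choices $\sigma = \epsilon$, $\epsilon_y = \epsilon_z = \epsilon^2$ collapses each summand into a multiple of $\epsilon$ (with the $L_f$-term of higher order $\epsilon^2$). Squaring via Cauchy--Schwarz, $(a+b+c)^2 \le 3(a^2+b^2+c^2)$ after grouping the two $L_g$-terms, and collecting constants then yields a bound of the form $\hat\epsilon$ as defined in the statement, finishing (\ref{equ:lem:varphidescent1}). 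For (\ref{equ:lem:varphidescent2}), I would invoke Proposition \ref{prop:TKN} with the threshold $C_a = \max\{2L_\varphi, a_0\}$: in the scenario where $t_1$ exists we have $a_{t_1+1} > C_a \ge 2L_\varphi$, and since the update $a_{t+1}^2 = a_t^2 + \|\widehat{\nabla}\varphi\|^2$ makes $\{a_t\}$ non-decreasing, the same lower bound $a_{t+1} \ge 2L_\varphi$ holds for every $t \ge t_1$. Hence $1 - L_\varphi/a_{t+1} \ge 1/2$, which tightens the middle coefficient in (\ref{equ:lem:varphidescent1}) to $1/(4a_{t+1})$, giving (\ref{equ:lem:varphidescent2}).

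The main obstacle is purely bookkeeping: Lemma \ref{lem:hypererror} produces four additive error terms of different orders in $\epsilon$, so one has to be careful which Young's/Cauchy--Schwarz grouping is used so that, after squaring and inserting $\sigma = \epsilon$ and $\epsilon_y = \epsilon_z = \epsilon^2$, the emerging constant matches the displayed $\hat\epsilon$. The analytic content — the Lipschitz-smoothness descent, the polarization split, the monotonicity of $\{a_t\}$, and the thresholding supplied by Proposition \ref{prop:TKN} — is otherwise entirely routine.
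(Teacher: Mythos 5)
Your proposal is correct and follows essentially the same route as the paper's proof: the $L_\varphi$-smoothness descent inequality, the polarization split of $-\langle \nabla\varphi(x_t), \widehat{\nabla}\varphi\rangle$, the bias bound from Lemmas \ref{lem:hypererror} and \ref{lem:pointdist} with $\sigma=\epsilon$, $\epsilon_y=\epsilon_z=\epsilon^2$, and the threshold $a_{t+1}>C_a\ge 2L_\varphi$ for $t\ge t_1$ to obtain the $1/(4a_{t+1})$ coefficient. The only remaining work is the constant bookkeeping for $\hat{\epsilon}$, which you correctly flag and which matches the paper's treatment in \eqref{equ:hatepsi}.
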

\begin{proof}
Since $\sigma = \epsilon$, $\epsilon_y = \epsilon_z = \epsilon^2$, from Lemma \ref{lem:hypererror}, we have
\begin{equation}
\label{equ:hatepsi}
\|\nabla\varphi(x_t) - \widehat{\nabla}\varphi(x_t, z_t^{K_t}, y_t^{N_t})\|^2 \le \left(2\bar{C}_{\sigma}^2 + \frac{6 L_f^2}{\mu^2}\epsilon^2 + 12 L_g\right)\epsilon^2 := \hat{C}\epsilon^2 := \hat{\epsilon}.
\end{equation}
Therefore, by Lemma \ref{lem:gradLip}, we have
\begingroup
\allowdisplaybreaks
\begin{align}
\varphi(x_{t+1}) \le& \varphi(x_t) + \langle\nabla\varphi(x_t), x_{t+1} - x_t \rangle + \frac{L_{\varphi}}{2}\|x_{t+1} - x_t\|^2 \notag \\
= & \varphi(x_t) - \frac{1}{a_{t+1} }\left\langle\nabla\varphi(x_t), \widehat{\nabla}\varphi(x_t, z_t^{K_t}, y_t^{N_t}) \right\rangle + \frac{L_{\varphi}}{2a_{t+1}^2 }\left\|\widehat{\nabla}\varphi(x_t, z_t^{K_t}, y_t^{N_t})\right\|^2 \notag \\
= & \varphi(x_t) - \frac{1}{2a_{t+1} }\|\nabla \varphi(x)\|^2 - \frac{1}{2a_{t+1} }\left\|\widehat{\nabla}\varphi(x_t, z_t^{K_t}, y_t^{N_t})\right\|^2 \notag \\
& + \frac{1}{2a_{t+1} }\left\|\nabla \varphi(x_t) - \widehat{\nabla}\varphi(x_t, z_t^{K_t}, y_t^{N_t})\right\|^2 + \frac{L_{\varphi}}{2a_{t+1}^2 }\left\|\widehat{\nabla}\varphi(x_t, z_t^{K_t}, y_t^{N_t})\right\|^2\notag\\
\overset{\eqref{equ:hatepsi}}{\le} &\varphi(x_t) - \frac{1}{2a_{t+1}}\|\nabla \varphi(x_t)\|^2 - \frac{1}{2a_{t+1}}\left(1-\frac{L_{\varphi}}{a_{t+1}}\right)\|\widehat{\nabla}\varphi(x_t, z_t^{K_t}, y_t^{N_t})\|^2 + \frac{\hat{\epsilon}}{2a_{t+1}}.\notag
\end{align}
\endgroup
\textbf{If $t_1$ in Proposition \ref{prop:TKN} exists}, then by the definition of $C_a$ in \eqref{equ:abc}, we have $a_{t+1} > C_a \ge 2L_{\varphi}$ for $t\ge t_1$. The desired result of \eqref{equ:lem:varphidescent2} follows from \eqref{equ:lem:varphidescent1}. The proof is complete.
\end{proof}

Similar to Lemma 8 in \cite{yang2025tuning}, we have the following upper bound for the step size $a_t$.
\begin{lemma}
\label{lem:atupper}
Suppose that Assumptions \ref{ass:basic}, \ref{ass:inf}, and \ref{ass:sigma} hold. If $t_1$ in Proposition \ref{prop:TKN} does not exist, we have $ a_t \le C_a$ for all $t \le T$.

If the $t_1$ in Proposition \ref{prop:TKN} exists, we have
\begin{equation*}
\left\{
\begin{aligned}
a_{t} \le & C_a, \quad & t \le t_1, \\
a_{t} \le & C_a + 2\varphi_0 + \frac{2t\hat{\epsilon}}{a_{0}}, \quad & t \ge t_1, 
\end{aligned}
\right.
\end{equation*}
where
\begin{equation}
\label{equ:lem:atupper}
\varphi_0 := 2\left(\varphi(x_{0}) - \varphi^*\right) + \frac{L_{\varphi}C_a^2}{a_{0}^2}. 
\end{equation}
\end{lemma}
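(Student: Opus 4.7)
\medskip
\noindent\textbf{Proof proposal for Lemma \ref{lem:atupper}.}
The plan is to handle the two cases of Proposition \ref{prop:TKN} separately and, in the nontrivial case, to control $a_t$ by combining two telescoping identities: one arising from the update rule of $a_t$ and one from the descent inequalities in Lemma \ref{lem:varphidescent}. If $t_1$ does not exist, then by the very statement of Proposition \ref{prop:TKN} we immediately have $a_t \le C_a$ for all $t \le T$, and there is nothing further to prove. Similarly, if $t_1$ does exist, then for $t \le t_1$ Proposition \ref{prop:TKN} yields $a_t \le C_a$ directly.

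The main work is the bound for $t \ge t_1$. The key elementary identity is
\begin{equation*}
a_{t+1} - a_t \;=\; \frac{a_{t+1}^2 - a_t^2}{a_{t+1}+a_t} \;=\; \frac{\|\widehat{\nabla}\varphi(x_t,y_t^{N_t},z_t^{K_t})\|^2}{a_{t+1}+a_t} \;\le\; \frac{\|\widehat{\nabla}\varphi(x_t,y_t^{N_t},z_t^{K_t})\|^2}{a_{t+1}},
\end{equation*}
which follows from Line \ref{alg:AF2BA:at+1} of Algorithm \ref{alg:AF2BA}. Summing this from $s = t_1$ to $t-1$ and then applying the inequality \eqref{equ:lem:varphidescent2} (valid in this regime since $a_{s+1} > C_a \ge 2L_\varphi$ for $s \ge t_1$), telescoped over the same range, produces
\begin{equation*}
a_t - a_{t_1} \;\le\; 4\bigl(\varphi(x_{t_1}) - \varphi^*\bigr) + \sum_{s=t_1}^{t-1}\frac{2\hat{\epsilon}}{a_{s+1}} \;\le\; 4\bigl(\varphi(x_{t_1}) - \varphi^*\bigr) + \frac{2(t-t_1)\hat{\epsilon}}{a_0},
\end{equation*}
where we have used $a_{s+1} \ge a_0$.

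It remains to control the ``warm-up'' error $\varphi(x_{t_1}) - \varphi^*$, which is where I expect the main obstacle to lie: prior to step $t_1$, the factor $(1 - L_\varphi/a_{s+1})$ in \eqref{equ:lem:varphidescent1} may be negative, so the descent of $\varphi$ cannot be exploited cleanly. The trick will be to drop the two non-positive squared-norm terms in \eqref{equ:lem:varphidescent1} and retain only the noise-type terms, yielding
\begin{equation*}
\varphi(x_{t_1}) - \varphi(x_0) \;\le\; \sum_{s=0}^{t_1-1}\frac{L_\varphi}{2a_{s+1}^2}\|\widehat{\nabla}\varphi(x_s,y_s^{N_s},z_s^{K_s})\|^2 + \sum_{s=0}^{t_1-1}\frac{\hat{\epsilon}}{2a_{s+1}}.
\end{equation*}
The first sum is bounded using $a_{s+1} \ge a_0$ together with the telescoping $\sum_s \|\widehat{\nabla}\varphi\|^2 = a_{t_1}^2 - a_0^2 \le C_a^2$, while the second sum is again bounded via $a_{s+1} \ge a_0$. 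This gives $\varphi(x_{t_1}) - \varphi^* \le \tfrac{\varphi_0}{2} + \tfrac{t_1\hat{\epsilon}}{2a_0}$ with $\varphi_0$ as defined in \eqref{equ:lem:atupper}. Substituting back, the two $t_1$-dependent terms combine as $\tfrac{2t_1\hat{\epsilon}}{a_0} + \tfrac{2(t-t_1)\hat{\epsilon}}{a_0} = \tfrac{2t\hat{\epsilon}}{a_0}$, and using $a_{t_1} \le C_a$ we obtain the claimed bound $a_t \le C_a + 2\varphi_0 + \tfrac{2t\hat{\epsilon}}{a_0}$.
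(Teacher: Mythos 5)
Your proposal is correct and follows essentially the same route as the paper's proof: the same telescoping of $a_{t+1}-a_t \le \|\widehat{\nabla}\varphi\|^2/a_{t+1}$ over $[t_1,t]$, the same use of \eqref{equ:lem:varphidescent2} to convert that sum into a function-value decrease plus noise, and the same warm-up bound on $\varphi(x_{t_1})$ obtained by dropping the negative terms in \eqref{equ:lem:varphidescent1} and controlling $\sum_{s<t_1}\|\widehat{\nabla}\varphi\|^2 \le a_{t_1}^2 \le C_a^2$. The only difference is a trivial reindexing (you sum to $t-1$ to bound $a_t$ directly, the paper sums to $t$ to bound $a_{t+1}$), and the constants assemble identically into $C_a + 2\varphi_0 + 2t\hat{\epsilon}/a_0$.
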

\begin{proof}
\textbf{If $t_1$ in Proposition \ref{prop:TKN} does not exist}, then for any $t \le T$, it holds that $a_t \le C_a$.

\textbf{If $t_1$ in Proposition \ref{prop:TKN} exists}, then for any $t < t_1$, it holds that $ a_{t+1} \le C_a$. By \eqref{equ:lem:varphidescent1} in Lemma \ref{lem:varphidescent}, for any $t \ge t_1$, it holds that
\begin{equation}
\varphi(x_{t+1}) 
\le \varphi(x_t) - \frac{1}{2a_{t+1}}\|\nabla \varphi(x_t)\|^2 - \frac{1}{4a_{t+1}}\|\widehat{\nabla}\varphi(x_t, z_t^{K_t}, y_t^{N_t})\|^2 + \frac{\hat{\epsilon}}{2a_{t+1}}. \notag
\end{equation}
Removing the nonnegative term $- \frac{1}{4a_{t+1}}\|\widehat{\nabla}\varphi(x_t, z_t^{K_t}, y_t^{N_t})\|^2$, we have
\begin{equation}
\label{equ:lem:at0}
\frac{\|\widehat{\nabla}\varphi(x_t, z_t^{K_t}, y_t^{N_t})\|^2}{a_{t+1}} \le 4\left(\varphi(x_t) - \varphi(x_{t+1})\right) + \frac{2\hat{\epsilon}}{a_{t+1}}.
\end{equation}
Summing \eqref{equ:lem:at0} from $t_1$ to $t$, we have
\begingroup
\allowdisplaybreaks
\begin{align}
\label{equ:lem:at1}
\sum_{i=t_1}^{t}\frac{\|\widehat{\nabla}\varphi(x_i, z_i^{K_i}, y_i^{N_i})\|^2}{a_{i+1}} \le& 4\sum_{i={t_1}}^{t}\left(\varphi(x_t) - \varphi(x_{i+1})\right) + \sum_{i={t_1}}^{t}\frac{2\hat{\epsilon}}{a_{i+1}} \notag \\
= & 4\left(\varphi(x_{t_1}) - \varphi(x_{t+1})\right) + \sum_{i={t_1}}^{t}\frac{2\hat{\epsilon}}{a_{i+1}}.
\end{align}
\endgroup
For $\varphi(x_{t_1})$, by \eqref{equ:lem:varphidescent1}, we have
\begin{equation*}
\varphi(x_{t_1}) \le \varphi(x_0) + \sum_{t=0}^{t_1-1}\frac{L_{\varphi}}{2a_{t+1}^2}\|\widehat{\nabla}\varphi(x_t, z_t^{K_t}, y_t^{N_t})\|^2 + \sum_{t=0}^{t_1-1}\frac{\hat{\epsilon}}{2a_{t+1}}.
\end{equation*}
This combines with \eqref{equ:lem:at1} and the fact that $\varphi(x) \ge \varphi^*$, we have
\begingroup
\allowdisplaybreaks
\begin{align}
\label{equ:lem:at3}
\sum_{i=t_1}^{t}\frac{\|\widehat{\nabla}\varphi(x_i, z_i^{K_i}, y_i^{N_i})\|^2}{a_{i+1}} 
\le & 4\left(\varphi(x_{0}) - \varphi^*\right) + \sum_{i=0}^{t_1-1}\frac{2L_{\varphi}}{a_{i+1}^2}\|\widehat{\nabla}\varphi(x_i, z_i^{K_i}, y_t^{N_i})\|^2 + \sum_{i=0}^{t}\frac{2\hat{\epsilon}}{a_{i+1}} \notag \\
\le & 
4\left(\varphi(x_{0}) - \varphi^*\right) + \frac{2L_{\varphi}\sum_{i=0}^{t_1-1}\|\widehat{\nabla}\varphi(x_i, z_i^{K_i}, y_i^{N_i})\|^2}{a_{0}^2} + \sum_{i=0}^{t}\frac{2\hat{\epsilon}}{a_{i+1}} \notag \\
\le & 4\left(\varphi(x_{0}) - \varphi^*\right) + \frac{2L_{\varphi}a_{t_1}^2}{a_{0}^2} + \frac{2(t+1)\hat{\epsilon}}{a_{0}} \notag \\
\le & 4\left(\varphi(x_{0}) - \varphi^*\right) + \frac{2L_{\varphi}C_a^2}{a_{0}^2} + \frac{2(t+1)\hat{\epsilon}}{a_{0}}. 
\end{align}
\endgroup
By the definition of $a_{t+1}$, we have 
\begingroup
\allowdisplaybreaks
\begin{align}
a_{t+1} =& a_t + \frac{\|\widehat{\nabla}\varphi(x_t, z_t^{K_t}, y_t^{N_t})\|^2}{a_{t+1} + a_t} \notag \\
\le& a_t + \frac{\|\widehat{\nabla}\varphi(x_t, z_t^{K_t}, y_t^{N_t})\|^2}{a_{t+1} } \notag \\
\le& a_{t_1} + \sum_{i=t_1}^{t}\frac{\|\widehat{\nabla}\varphi(x_i, z_i^{K_i}, y_i^{N_i})\|^2}{a_{i+1}} \notag \\
\overset{\eqref{equ:lem:at3}}{\le} & C_a + 4\left(\varphi(x_{0}) - \varphi^*\right) + \frac{2L_{\varphi}C_a^2}{a_{0}^2} + \frac{2(t+1)\hat{\epsilon}}{a_{0}}. \notag
\end{align}
\endgroup
Thus, the proof is complete. 
\end{proof}

\subsection{Proof of Theorem \ref{thm:conv}}
\begin{proof}
\textbf{If $t_1$ in Proposition \ref{prop:TKN} does not exist}, we have $ a_T \le C_a$. Then, by \eqref{equ:lem:varphidescent2} in Lemma \ref{lem:varphidescent}, we have 
\begin{equation*}
\frac{\|\nabla \varphi(x_t)\|^2}{a_{t+1}} \le 2\left(\varphi(x_t) - \varphi(x_{t+1})\right) + \frac{L_{\varphi}}{a_{t+1}^2}\left\|\widehat{\nabla}\varphi(x_t, z_t^{K_t}, y_t^{N_t})\right\|^2 + \frac{\hat{\epsilon}}{a_{t+1}},
\end{equation*}
where $\hat{\epsilon}$ is defined in Lemma \ref{lem:varphidescent}.

Summing it from $t=0$ to $T-1$, we have
\begingroup
\allowdisplaybreaks
\begin{align}
\label{equ:thm:conv1}
\frac{1}{T}\sum_{t=0}^{T-1} \frac{\|\nabla \varphi(x_t)\|^2}{a_{t+1}} \le& \frac{2}{T}\left(\varphi(x_0) - \varphi(x_{T})\right) + \frac{L_{\varphi}}{a_{0}^2}\frac{1}{T}\sum_{t=0}^{T-1}\left\|\widehat{\nabla}\varphi(x_t, z_t^{K_t}, y_t^{N_t})\right\|^2 + \frac{1}{T}\sum_{t=0}^{T-1}\frac{\hat{\epsilon}}{a_{t+1}} \notag \\
\le & \frac{1}{T}\left(2\left(\varphi(x_0) - \varphi^*\right) + \frac{L_{\varphi}C_a^2}{a_{0}^2}\right) + \frac{\hat{\epsilon}}{a_0} \notag\\
= & \frac{\varphi_0}{T} + \frac{\hat{\epsilon}}{a_0},
\end{align}
where the second inequality follows from $\sum_{t=0}^{T-1}\|\widehat{\nabla} \varphi(x_t, y_t^{K_t}, v_t^{N_t})\|^2 \le a_T^2 \le C_{a}^2$, and $\varphi_0$ is defined in \eqref{equ:lem:atupper}. 
\endgroup

\textbf{If $t_1$ in Proposition \ref{prop:TKN} exists}, for any $t < t_1$, by \eqref{equ:lem:varphidescent1} in Lemma \ref{lem:varphidescent}, we have
\begin{equation}
\label{equ:thm:conv2}
\frac{\|\nabla \varphi(x_t)\|^2}{a_{t+1}} \le 2\left(\varphi(x_t) - \varphi(x_{t+1})\right) + \frac{L_{\varphi}}{a_{t+1}^2}\|\widehat{\nabla}\varphi(x_t, z_t^{K_t}, y_t^{N_t})\|^2 + \frac{\hat{\epsilon}}{a_{t+1}}. 
\end{equation}
For any $t \ge t_1$, by \eqref{equ:lem:varphidescent2} in Lemma \ref{lem:varphidescent}, we have
\begin{equation}
\label{equ:thm:conv3}
\frac{\|\nabla \varphi(x_t)\|^2}{a_{t+1}} \le 2\left(\varphi(x_t) - \varphi(x_{t+1})\right) + \frac{\hat{\epsilon}}{a_{t+1}}. 
\end{equation}
Summing \eqref{equ:thm:conv2} and \eqref{equ:thm:conv3}, we have
\begingroup
\allowdisplaybreaks
\begin{align}
\frac{1}{T}\sum_{t=0}^{T-1} \frac{\|\nabla \varphi(x_t)\|^2}{a_{t+1}} =& \frac{1}{T}\sum_{t=0}^{t_1-1} \frac{\|\nabla \varphi(x_t)\|^2}{a_{t+1}} + \frac{1}{T}\sum_{t=t_1}^{T-1} \frac{\|\nabla \varphi(x_t)\|^2}{a_{t+1}} \notag \\
\le & \frac{2}{T}\left(\varphi(x_0) - \varphi(x_{T})\right) + \frac{L_{\varphi}}{a_{0}^2}\frac{1}{T}\sum_{t=0}^{t_1-1}\left\|\widehat{\nabla}\varphi(x_t, z_t^{K_t}, y_t^{N_t})\right\|^2 + \frac{1}{T}\sum_{t=0}^{T-1}\frac{\hat{\epsilon}}{a_{t+1}} \notag \\
\le & \frac{1}{T}\left(2\left(\varphi(x_0) - \varphi^*\right) + \frac{L_{\varphi}C_a^2}{a_{0}^2}\right) + \frac{\hat{\epsilon}}{a_0} = \frac{\varphi_0}{T} + \frac{\hat{\epsilon}}{a_0},\notag
\end{align}
\endgroup
where the last inequality follows from Assumption \ref{ass:inf} and $a_{t_1} \le C_a$. This result is equivalent to \eqref{equ:thm:conv1}.

Then, since $a_{t+1} \le a_T$, by Lemma \ref{lem:atupper}, we have
\begin{equation}
\label{equ:thm:conv5}
\frac{1}{T}\sum_{t=0}^{T-1}\|\nabla \varphi(x_t)\|^2 \le \left(\frac{\varphi_0}{T} + \frac{\hat{\epsilon}}{a_0}\right) a_{T}
\le \left(\frac{\varphi_0}{T} + \frac{\hat{\epsilon}}{a_0}\right) \left(C_a + 2\varphi_0 + \frac{2T \hat{\epsilon}}{a_0}\right).
\end{equation}
Since $T = 1/\epsilon^2$ and $\hat{\epsilon} = \hat{C}\epsilon^2$, we have
\[
C_a + 2\varphi_0 + \frac{2T \hat{\epsilon}}{a_0} = C_a + 2\varphi_0 + \frac{2\hat{C}}{a_0},
\]
which demonstrates that
\begin{equation*}
\frac{1}{T}\sum_{t=0}^{T-1}\|\nabla \varphi(x_t)\|^2 \overset{\eqref{equ:thm:conv5}}{\le} \left(\varphi_0 + \frac{\hat{C}}{a_0}\right)\left(C_a + 2\varphi_0 + \frac{2\hat{C}}{a_0}\right)\epsilon^2.
\end{equation*}
Therefore, we conclude that after at most $T = 1/\epsilon^2$ iterations, Algorithm \ref{alg:AF2BA} can find an $\gO(\epsilon)$-stationary point of Problem \eqref{p:primal}. The iteration complexity of Algorithm \ref{alg:AF2BA} is proved.

For the first-order oracle complexity of \textbf{AF${}^2$BA}, recall in Algorithm \ref{alg:AF2BA}, we take $\epsilon_z = \epsilon_y = \epsilon^2$, from Proposition \ref{prop:KtNt}, we have
\begin{equation*}
K_t \le \frac{\log(C_b^2/ b_0^2)}{\log(1+\epsilon_z^2/C_b^2)} + \frac{b_{\max}}{\mu}\log\left(\frac{2L_g^2 (b_{\max}-C_b)}{\mu \epsilon_z^2}\right) = \gO\left(\frac{1}{\log(1+\epsilon^4)} + \log\frac{1}{\epsilon}\right) = \gO\left(\frac{1}{\epsilon^4}\right).
\end{equation*}
Similarly, we have 
\begin{equation*}
N_t = \gO\left(\frac{1}{\epsilon^4}\right). 
\end{equation*}
Then, the first-order oracle complexity of \textbf{AF${}^2$BA} is bounded by
\begin{equation*}
T\max_t\{K_t + N_t\} = \gO\left(\frac{1}{\epsilon^2}\right)\cdot \gO\left(\frac{1}{\epsilon^4}\right) = \gO\left(\frac{1}{\epsilon^6}\right).
\end{equation*}
The oracle complexity of \textbf{AF${}^2$BA} is established.

For the first-order oracle complexity of \textbf{A${}^2$F${}^2$BA}, from Proposition \ref{prop:KtNt2}, we have
\begin{equation*}
K_t \le \frac{\log(C_1 / \epsilon_z^2)}{\log(1/(1-p))} = \gO\left(\log\frac{1}{\epsilon}\right), ~\text{and}~ N_t \le \frac{\log(C_2 / \epsilon_y^2)}{\log(1/(1-p))} = \gO\left(\log\frac{1}{\epsilon}\right).
\end{equation*}
Therefore, the first-order oracle complexity of \textbf{A${}^2$F${}^2$BA} is bounded by
\begin{equation*}
T\max_t\{K_t + N_t\} = \gO\left(\frac{1}{\epsilon^2}\right)\cdot \gO\left(\log\frac{1}{\epsilon}\right) = \tilde{\gO}\left(\frac{1}{\epsilon^2}\right).
\end{equation*}
The proof is complete.
\end{proof}

\end{document}